\newcommand{\frc}[2]{{}^{#1\!\!}/_{\!#2}}
\newcommand{\rfrc}[2]{\raisebox{-.2ex}{$\frc{#1}{#2}$}}
\newcommand{\Bcal}{\mathcal{B}}
\newcommand{\Om}{{\mathcal{E}}}
\newcommand{\NOm}{{\overline{\mathcal{E}}}}
\newcommand{\BJ}{{\bf J}}
\newcommand{\BP}{{\bf P}}
\newcommand{\BW}{{\bf W}}
\newcommand{\BZ}{{\bf Z}}
\newcommand{\R}{\mathbb{R}}
\newcommand{\T}{\textnormal{T}}
\newcommand{\Diag}{\textsc{Diag}\xspace} 
\newcommand\doi[1]{DOI: \doilink{#1}}
\newcommand{\Ma}{{\bf {M_{aug}}}}
\newcommand{\wa}{{\bf {w_{aug}}}}
\newcommand{\Na}{{\bf {N_{aug}}}}
\newcommand{\wak}{{\bf w}^{[k]}_{\bf aug}}
\newcommand{\fwas}{\widetilde{\bf w}^{*}_{\bf aug}}
\newcommand{\Nak}{{\bf N}^{[k]}_{\bf aug}}
\newcommand{\Rak}{{\bf R}^{[k]}_{\bf aug}}
\newcommand{\Ra}{{\bf R_{aug}}}
\newcommand{\be}{\begin{equation}}
\newcommand{\ee}{\end{equation}}
\newcommand{\ba}{\begin{array}}
\newcommand{\ea}{\end{array}}
\newcommand{\bpm}{\begin{pmatrix}}
\newcommand{\epm}{\end{pmatrix}}
\newcommand{\Dcal}{{\mathcal{D}}}
\newcommand{\Bx}{{\bf x}}
\newcommand{\Bv}{{\bf v}}
\newcommand{\Bz}{{\bf z}}
\newcommand{\Bw}{{\bf w}}
\newcommand{\Bb}{{\bf b}}
\newcommand{\Bd}{{\bf d}}
\newcommand{\Bh}{{\bf h}}
\newcommand{\By}{{\bf y}}
\newcommand{\Br}{{\bf r}}
\newcommand{\Bp}{{\bf p}}
\newcommand{\Ba}{{\bf a}}
\newcommand{\Bq}{{\bf q}}
\newcommand{\argmin}{\mathop{\rm argmin}}
\newcommand{\Bnu}{{\boldsymbol{\nu}}}
\newcommand{\barN}{{\Delta\!N}}
\renewcommand{\thefootnote}{\fnsymbol{footnote}}
\begin{document}

\title{Local Linear Convergence of ISTA and FISTA on the LASSO Problem \thanks{This research was partially supported by NSF grant IIS-1319749.}}
\date{}
\author{Shaozhe Tao\footnotemark[2]
\and Daniel Boley\footnotemark[3]
\and Shuzhong Zhang\footnotemark[4]
}
\maketitle

\renewcommand{\thefootnote}{\fnsymbol{footnote}}

\footnotetext[2]{Department of Industrial and Systems Engineering, University of Minnesota, Minneapolis, MN 55455, United States. Email: taoxx120@umn.edu.}
\footnotetext[3]{Department of Computer Science and Engineering, University of Minnesota, Minneapolis, MN 55455, United States. Email: boley@cs.umn.edu.}
\footnotetext[4]{Department of Industrial and Systems Engineering, University of Minnesota, Minneapolis, MN 55455, United States. Email: zhangs@umn.edu.}

\begin{abstract}
We establish local linear convergence bounds for the ISTA and FISTA iterations on the model
LASSO problem.  We show that FISTA can be viewed as an accelerated ISTA process.
Using a spectral analysis, we show that, when close enough to the solution, both iterations
converge linearly, but FISTA slows down compared to ISTA, making it advantageous to switch
to ISTA toward the end of the iteration processs.  We illustrate the results
with some synthetic numerical examples.
\end{abstract}

\section{Introduction}
The $l_1$-norm regularized least squares model has received much attention recently due to its wide applications in the real problems including compressed sensing \cite{CDS99}, statistics \cite{EHJ04}, sparse coding \cite{GL10}, geophysics \cite{TBM79} and so on. The problem in question is:
\be \label{LASSO}
\min_{x\in \R^{n}} \rfrc{1}{2}\|A\Bx - \Bb\|_2^2 + \lambda \|\Bx\|_1
\ee
where $A\in \R^{m\times n}$ is a given matrix, $\Bb$ is a given vector and $\lambda$ is a positive scalar.

The idea of $l_1$ regularization is decades old, but the least squares problem with $l_1$ penalty was presented and popularized independently under names Least Absolute Selection and Shrinkage Operator (LASSO) \cite{T94} and Basis Pursuit Denoising \cite{CDS99}. For example, in compressed sensing, we are interested in recovering a solution $\Bx$ to an undetermined system of linear equations $A\Bx = \Bb$ in the case where $n \gg m$. The linear algebra tells us that this linear system either does not exist or is not unique when the number of unknowns is greater than the number of equations. The conventional way to solve the system is to find the minimum $l_2$-norm solution, also known as linear least squares. However, if $\Bx$ is sparse, as very common in many applications, then $\Bx$ can be exactly recovered by computing the above $l_1$-norm regularized least-squares model. Since LASSO becomes the dominant expression describing this model, we will use term LASSO to denote the above model for the remainder of the paper.

Although the LASSO problem can be cast as a second order cone programming and solved by standard general algorithms like an interior point method \cite{BN01}, the computational complexity of such traditional methods is too high to handle large-scale data encountered in many real applications. Recently, a number of algorithms that take advantage of the special structure of the LASSO problem has been proposed. Among them, two remarkable ones are iterative shrinkage thresholding algorithm (ISTA) and its accelerated version fast iterative shrinkage thresholding algorithm (FISTA).

ISTA is also known as the proximal gradient method and its computation only involves matrix and vector multiplication, which has great advantage over standard convex algorithms by avoiding a matrix factorization \cite{PB13}. Recently, Beck and Teboulle \cite{BT09} proposed an accelerated ISTA, named as FISTA, in which a specific relaxation parameter is chosen. A similar algorithm to FISTA was also previously developed
independently by Nesterov in \cite{N83}. Both two algorithms are designed for solving problems containing convex differentiable objectives combined with an $l_1$ regularization
term as the following problem:
$$
\text{min} \{f(\Bx) + g(\Bx): \Bx\in \R^n\}
$$
where $f$ is a smooth convex function and $g$ is a continuous function but possibly nonsmooth.
Clearly, the LASSO problem is a special case of above formulation with $f(\Bx) = \rfrc{1}{2}\|A\Bx - \Bb\|^2$, $g(\Bx) = \|\Bx\|_1$. 
Its gradient  $\nabla f = A^{\T}A\Bx - A^{\T}\Bb$ is Lipschitz continuous with constant $L(f) = 2\rho(A^{\T}A)=\|A^{\T}A\|_2$, i.e., $\|\nabla f (\Bx_1) - \nabla f (\Bx_2) \| \leq L(f)  \|\Bx_1- \Bx_2\|,  \forall \Bx_1, \Bx_2 \in \R^n$.
It has been shown \cite{BT09} that FISTA provides a convergence rate of $O(1/k^2)$ compared to the rate of $O(1/k)$ by ISTA, while maintaining practically the same per-iteration cost,
where $k$ is the iteration number. However, in contrast to the results of a global convergence rate, as far as we know, there is no result on the local convergence behavior of standard ISTA and FISTA.

In this work, we establish local bounds on the convergence behavior of  ISTA and FISTA on the LASSO problem.
Comparing the two methods, we show how FISTA can be considered as an accelerated ISTA, but as one
approaches the solution, ISTA can actually be faster.
Extending the same techniques as in \cite{B13}, we show that linear convergence can be reached eventually, but not necessarily from the beginning.
Specifically, we give a way to represent the ISTA and FISTA iteration as a matrix recurrence and apply a spectral analysis on the corresponding eigenvalue problem.
We analyze the local behavior as it passes through several phases or ``regimes", treating each regime separately.
Based on the spectral analysis, each possible regime one can encounter during the course of the iteration is characterized.
Under normal circumstances, the theory predicts that either ISTA or FISTA should pass through several stages or ``regimes" of four different types, some of which consist of taking constant steps, but finally reaching a regime of linear convergence when close enough to the optimal solution.
Besides, with our analysis, more properties on FISTA and ISTA on the LASSO problem can be derived.

Throughout this paper, all vector and matrix norms are the $l_2$ norms (the largest singular value for a matrix) unless otherwise specified. For real symmetric matrices, the matrix $2$-norm is the same as the spectral radius (largest absolute value of any eigenvalue), hence we use those interchangeably for symmetric matrices.

The paper is organized as follows.
Section \ref{sec:prelim} gives some basic preliminaries of the paper.
We introduced how to formulate ISTA and FISTA into a matrix recurrence form in Sections \ref{sec:ISTArecur} \& \ref{sec:FISTArecur} and then derived spectral properties of the matrix operators in Sections \ref{sec:rak} \& \ref{sec:nak-prop}.
Section \ref{sec:regimes} gives details about four types of regimes that ISTA and FISTA will encounter in the iterations process based on our spectral analysis.
Our first main result is given in Section \ref{sec:linear}, which shows the local linear convergence of ISTA and FISTA on the LASSO problem.
In Section \ref{sec:accel} we compare the behavior in each regime, showing that FISTA can be faster that ISTA through most of the regimes, but
asymptotically can be slower as one approaches the optimal solution.
Section \ref{sec:examples} includes two numerical examples run by the standard ISTA and FISTA, to illustrate many of the predicted behaviors.

\section{Preliminaries} \label{sec:prelim}
\subsection{Optimality condition of the LASSO problem} \label{sec:LASSO}
The first order KKT optimality conditions for the LASSO problem
\eqref{LASSO}  are
\begin{equation}
 A^{\T}(\Bb-A\Bx)=\lambda\Bnu \label{KT-1}
\end{equation}
where each component of $\Bnu$ satisfies
\begin{equation}
\left\{\begin{matrix}
{\nu_i = \text{sign}(x_i)} & \text{if} ~ x_i \neq 0\\
-1 \le \nu_i \le +1 & \text{if} ~ x_i=0
\end{matrix} \right\} ~~~~ \label{KT-2}
~~~ \text{for}~~~ i = 1,2,\cdots.
\end{equation}
Here the ``sign'' function is defined as
$${
\textrm{sign}(x) = \begin{cases}
+1 & \textrm{if } x > 0 \\
 0 & \textrm{if } x = 0 \\
-1 & \textrm{if } x < 0. \\
\end{cases}
}$$

\subsection{Uniqueness} \label{sec:uniq}
There are various sufficient and necessary conditions for the uniqueness of the LASSO problem or its variants. For example, \cite{OPT99, CY09, F05} show different sufficient conditions and \cite{T13} studies the necessary conditions for the LASSO problem. In fact, the problem \eqref{LASSO} needs to have a unique solution in many situations. For example, in compressive sensing signal recovery, having non-uniqueness solutions will result in unreliable recovery given the data. We refer readers to \cite{T13, ZYC12} and references therein for the uniqueness of the LASSO problem.

\subsection{ISTA and FISTA iteration} \label{sec:def}
In this part, we review the basic iteration of ISTA and FISTA for solving the LASSO problem. To make clear the difference between ISTA and FISTA, we let $\widehat{\Bx}$ and $\widetilde{\Bx}$ denote the iterates of ISTA and FISTA respectively in the remainder of this paper. The basic step of ISTA for the LASSO problem can be reduced to \cite{DDD04, BT09}
\begin{eqnarray*}
\widehat{\Bx}^{[k+1]} & = & \argmin_{\widehat{\Bx}} \{g(\widehat{\Bx}) + \rfrc{L}{2}\|\widehat{\Bx}- (\widehat{\Bx}^{[k]}-\rfrc{1}{L}\nabla f(\widehat{\Bx}^{[k]}))\|^2\} \\
& = & \argmin_{\widehat{\Bx}} \{\lambda\|\widehat{\Bx}\|_1 + \rfrc{L}{2}\|\widehat{\Bx}- (\widehat{\Bx}^{[k]}-\rfrc{1}{L} ( A^{\T}A \widehat{\Bx}^{[k]} - A^{\T}\Bb ) \|^2\} \\
& = & \text{Shr}_{\rfrc{\lambda}{L}}((I - \rfrc{1}{L}A^{\T}A)\widehat{\Bx}^{[k]} + \rfrc{1}{L}A^{\T}\Bb).
\end{eqnarray*}

One advantage of ISTA is that the above step can be solved in closed form, leading to the following updates repeated until convergence, where $\widehat{\Bx}^{[k]}$ denote the vectors from previous pass, and $L$ is the given constant equal to $\|A^{\T}A\|_2$.
\begin{center}
\begin{tabular}{@{}llr@{}}\toprule
{Algorithm 1: One pass of ISTA}\\
\hline
start with $\widehat{\Bx}^{[k]}$. \\
\qquad Set $\widehat{\Bx}^{[k+1]}= \text{Shr}_{\rfrc{\lambda}{L}}((I - \rfrc{1}{L}A^{\T}A)\widehat{\Bx}^{[k]} + \rfrc{1}{L}A^{\T}\Bb)$.\\
Result is $\widehat{\Bx}^{[k+1]}$ for next pass.\\
\hline \label{Alg:ISTA}
\end{tabular}
\end{center}

As for FISTA, the difference from ISTA is that the shrinkage operator is not employed on the previous point $\widetilde{\Bx}^{[k-1]}$ but a point $\By^{[k]}$, which uses a very specific linear combination of the previous two points $\{\Bx^{[k-1]}, \widetilde{\Bx}^{[k-2]}\}$. The algorithm of FISTA for LASSO problem is presented as below, where the initial $\By^{[1]} = \widetilde{\Bx}^{[0]} \in \R^n$ and $t^{[0]}=t^{[1]}=1$.

\begin{center}
\begin{tabular}{@{}llr@{}}\toprule
{Algorithm 2: One pass of FISTA}\\
\hline
start with $t^{[k]}$, $t^{[k-1]}$, $\widetilde{\Bx}^{[k-1]}$ and $\widetilde{\Bx}^{[k-2]}$. \\
\qquad 1. Set $\By^{[k]}= \widetilde{\Bx}^{[k-1]}+ \frac{t^{[k-1]}-1}{t^{[k]}}(\widetilde{\Bx}^{[k-1]}-\widetilde{\Bx}^{[k-2]})  $. \\
\qquad 2. Set $\widetilde{\Bx}^{[k]}= \text{Shr}_{\rfrc{\lambda}{L}}((I - \rfrc{1}{L} A^{\T}A)\By^{[k]} + \rfrc{1}{L}A^{\T}\Bb)$.\\
\qquad 3. Set $t^{[k+1]} = \frac{1+\sqrt{1+4{t^{[k]}}^2}}{2}$. \\
Result is $t^{[k+1]}$, $t^{[k]}$, $\widetilde{\Bx}^{[k]}$, $\widetilde{\Bx}^{[k-1]}$ for next pass.\\
\hline
\end{tabular}
\end{center}

\section{Auxiliary Variables with Local Monotonic Behavior} \label{sec:aux}
\subsection{ISTA as a Matrix Recurrence} \label{sec:ISTArecur}
Instead of carrying the iteration using variables $\widehat{\Bx}^{[k]}$, we use two auxiliary variables to carry the iteration. One variable, namely, $\widehat{\Bw}^{[k]}$ exhibits smooth behavior, with linear convergence locally around a fixed point, and the other variable $\widehat{\Bd}^{[k]}$ is simply a ternary vector based on the three cases of the shrinkage operator. We let, for all $k$, the common iterate be
\be\label{ISTA-def-w}
\widehat{\Bw}^{[k]} = (I - \rfrc{1}{L}A^{\T}A)\widehat{\Bx}^{[k]} + \rfrc{1}{L}A^{\T}\Bb
\ee
and vector $\widehat{\Bd}^{[k]}$ be defined elementwise as
\begin{eqnarray}
\widehat{d}_i^{[k]} = \text{sign}(\text{Shr}_{\rfrc{\lambda}{L}}\widehat{w}_i^{[k]})) = &\begin{cases}
{1} & \text{if} ~ \widehat{w}_i^{[k]} > \rfrc{\lambda}{L} \\
0  & \text{if} ~ -\rfrc{\lambda}{L} \leq \widehat{w}_i^{[k]} \leq \rfrc{\lambda}{L}\\
{-1} & \text{if} ~ \widehat{w}_i^{[k]} < -\rfrc{\lambda}{L}. \\
\end{cases} \label{ISTA-def-D}
\end{eqnarray}

By the updating rule in Alg. 1 and above two equations, one can obtain the $\widehat{\Bx}^{[k]} $-update in terms of $\widehat{\Bw}^{[k]} $ and $\widehat{\Bd}^{[k]} $
\be\label{ISTA-x-w}
\widehat{\Bx}^{[k+1]} = \text{Shr}_{\rfrc{\lambda}{L}}(\widehat{\Bw}^{[k]}) = (\widehat{D}^{[k]})^2\widehat{\Bw}^{[k]} - \rfrc{\lambda}{L}\widehat{\Bd}^{[k]}
\ee
where the matrix $\widehat{D}^{[k]} = \text{diag}(\widehat{\Bd}^{[k]})$. Using \eqref{ISTA-def-w}, \eqref{ISTA-def-D} and \eqref{ISTA-x-w}, the update formula for $\widehat{\Bw}$ now can be expressed explicitly as follows:
\begin{eqnarray*}
\widehat{\Bw}^{[k+1]} &=& R^{[k]} \widehat{\Bw}^{[k]} + \widehat{\Bh}^{[k]}\\
 &=& [(I-\rfrc{1}{L}A^{\T}A)(\widehat{D}^{[k]})^2]\widehat{\Bw}^{[k]} - (I-\rfrc{1}{L}A^{\T}A) \rfrc{\lambda}{L}\widehat{\Bd}^{[k]} + \rfrc{1}{L}A^{\T}\Bb
\end{eqnarray*}
where we denote
\be
\begin{array}{ccl} \label{h-ista}
R^{[k]}&=&[(I-\rfrc{1}{L}A^{\T}A)(\widehat{D}^{[k]})^2] \\ 
 \widehat{\Bh}^{[k]} &=& -(I-\rfrc{1}{L}A^{\T}A) \rfrc{\lambda}{L}\widehat{\Bd}^{[k]} + \rfrc{1}{L}A^{\T}\Bb
\end{array}
\ee
throughout this paper. Therefore, the ISTA in Alg.\ 1 with variable $\widehat{\Bx}$ can be modified to the following procedure using the new variables $\widehat{\Bw}$ and $\widehat{D}$.
\begin{center}
\begin{tabular}{@{}llr@{}}\toprule
{Algorithm 3: One pass of modified ISTA}\\
\hline
start with $\widehat{\Bw}^{[k]}$,$\widehat{D}^{[k]}$.  \\
\qquad 1. $\widehat{\Bw}^{[k+1]}= R^{[k]} \widehat{\Bw}^{[k]} + \widehat{\Bh}^{[k]}$
(with $R^{[k]},\widehat{\Bh}^{[k]}$ defined by \eqref{h-ista}.\\
\qquad 2. $\widehat{D}^{[k+1]}=\textsc{{Diag}}(\textsc{sign}(\text{Shr}_{\rfrc{\lambda}{L}}(\widehat{\Bw}^{[k]})))$.\\
Result is $\widehat{\Bw}^{[k+1]}$, $\widehat{D}^{[k+1]}$ for next pass.\\
\hline \label{Alg:ISTA-2}
\end{tabular}
\end{center}
Alg.\ 3 is mathematically equivalent to Alg.\ 1 and is designed only for the purpose of analysis, not intended for computation. We note that step 1 of Alg.\ 3 can be written as a homogeneous matrix recurrence in \eqref{ISTA-Raug},
which we will use to characterize ISTA's convergence.
\be
\begin{aligned}\label{ISTA-Raug}
\bpm \widehat{\Bw}^{[k+1]} \\1 \epm &=&\Rak \bpm \widehat{\Bw}^{[k]} \\1 \epm = \bpm R^{[k]} & \widehat{\Bh}^{[k]} \\0&1 \epm \bpm \widehat{\Bw}^{[k]} \\1 \epm\\
&=& \bpm (I-\rfrc{1}{L}A^{\T}A)(\widehat{D}^{[k]})^2 & \widehat{\Bh}^{[k]} \\0&1 \epm \bpm \widehat{\Bw}^{[k]} \\1 \epm
\end{aligned}
\ee
where we denote $\Rak$ as $\bpm R^{[k]} & \widehat{\Bh}^{[k]} \\0&1 \epm$, the augmented matrix of $R^{[k]}$, in this paper.

{The following lemma shows the fixed point of Alg.\ 3 is a KKT point of the LASSO problem and vice versa.}
\begin{lemma} \label{ISTA-KT-2}
Suppose $\bpm \widehat{\Bw} \\1 \epm$ is an eigenvector corresponding to eigenvalue 1 of $\Ra$(omitting $[k]$) in \eqref{ISTA-Raug} and $\widehat{D} = \widehat{D}^{[k+1]} = \widehat{D}^{[k]}=\textsc{{Diag}}(\widehat{\Bd})$ with entries $\widehat{d}_i = 1$ if $\widehat{w}_i > \rfrc{\lambda}{L}$, $\widehat{d}_i = -1$ if $\widehat{w}_i < -\rfrc{\lambda}{L}$ and $\widehat{d}_i = 0$ if $-\rfrc{\lambda}{L} \leq \widehat{w}_i \leq \rfrc{\lambda}{L}$. Then the variable defined by $\widehat{\Bx}=\text{Shr}_{\rfrc{\lambda}{L}}(\widehat{\Bw})$ satisfies the $1$st order KKT condition. Conversely, if $\widehat{\Bx}$ and $\Bnu=\rfrc{1}{\lambda}A^{\T}(\Bb-A\widehat{\Bx})$ satisfy the KKT condition, then $\begin{pmatrix}\widehat{\Bw} \\1 \end{pmatrix}$, with $\widehat{\Bw} = \widehat{\Bx} + \rfrc{\lambda}{L}\Bnu$, is an eigenvector of $\Ra$ corresponding to eigenvalue 1, where $\Ra$ is defined as in \eqref{ISTA-Raug} and $\widehat{D}^{[k+1]} = \widehat{D}^{[k]} = \widehat{D} = \textsc{{Diag}}(\widehat{\Bd})$ with entries $\widehat{d}_i = 1$ if $\widehat{w}_i > \rfrc{\lambda}{L}$, $\widehat{d}_i = -1$ if $\widehat{w}_i < -\rfrc{\lambda}{L}$ and $\widehat{d}_i = 0$ if $-\rfrc{\lambda}{L} \leq \widehat{w}_i \leq \rfrc{\lambda}{L}$.
\end{lemma}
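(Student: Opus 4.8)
The plan is to prove the two implications separately, in each case unwinding the eigenvector equation for $\Ra$ into the scalar update $R\widehat{\Bw}+\widehat{\Bh}=\widehat{\Bw}$ and then performing a componentwise case split along the three branches of the shrinkage operator. At bottom the statement is just the fact that soft--thresholding is the proximal map of $\|\cdot\|_1$, rewritten in the augmented--matrix notation of \eqref{ISTA-Raug}; no new idea is needed beyond substitution of \eqref{ISTA-def-w}, \eqref{h-ista} and \eqref{ISTA-x-w}.

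For the forward implication I would read off from \eqref{ISTA-Raug} and \eqref{h-ista} that $\Ra\bpm\widehat{\Bw}\\1\epm=\bpm\widehat{\Bw}\\1\epm$ is equivalent to $(I-\rfrc{1}{L}A^{\T}A)\widehat{D}^2\widehat{\Bw}-(I-\rfrc{1}{L}A^{\T}A)\rfrc{\lambda}{L}\widehat{\Bd}+\rfrc{1}{L}A^{\T}\Bb=\widehat{\Bw}$. Since the hypothesis on $\widehat{D}$ makes it equal to $\textsc{Diag}(\text{sign}(\text{Shr}_{\rfrc{\lambda}{L}}(\widehat{\Bw})))$, identity \eqref{ISTA-x-w} applies and yields $\text{Shr}_{\rfrc{\lambda}{L}}(\widehat{\Bw})=\widehat{D}^2\widehat{\Bw}-\rfrc{\lambda}{L}\widehat{\Bd}=:\widehat{\Bx}$, so the displayed identity collapses to $(I-\rfrc{1}{L}A^{\T}A)\widehat{\Bx}+\rfrc{1}{L}A^{\T}\Bb=\widehat{\Bw}$, i.e.\ $\widehat{\Bw}=\widehat{\Bx}+\rfrc{1}{L}A^{\T}(\Bb-A\widehat{\Bx})$. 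Putting $\Bnu=\rfrc{1}{\lambda}A^{\T}(\Bb-A\widehat{\Bx})$ makes \eqref{KT-1} hold by definition and gives $\widehat{\Bw}=\widehat{\Bx}+\rfrc{\lambda}{L}\Bnu$; then on $\{\widehat{d}_i=1\}$ one has $\widehat{w}_i>\rfrc{\lambda}{L}$, so $\widehat{x}_i=\widehat{w}_i-\rfrc{\lambda}{L}>0$ and $\rfrc{\lambda}{L}\nu_i=\widehat{w}_i-\widehat{x}_i=\rfrc{\lambda}{L}$, i.e.\ $\nu_i=1=\text{sign}(\widehat{x}_i)$; the case $\{\widehat{d}_i=-1\}$ is symmetric; and on $\{\widehat{d}_i=0\}$ one has $\widehat{x}_i=0$ and $\rfrc{\lambda}{L}\nu_i=\widehat{w}_i\in[-\rfrc{\lambda}{L},\rfrc{\lambda}{L}]$, hence $\nu_i\in[-1,1]$. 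This is exactly \eqref{KT-2}.

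For the converse, given $\widehat{\Bx}$ and $\Bnu=\rfrc{1}{\lambda}A^{\T}(\Bb-A\widehat{\Bx})$ satisfying \eqref{KT-1}--\eqref{KT-2}, I would set $\widehat{\Bw}=\widehat{\Bx}+\rfrc{\lambda}{L}\Bnu$ and first check that $\text{Shr}_{\rfrc{\lambda}{L}}(\widehat{\Bw})=\widehat{\Bx}$ together with the consistency of $\widehat{D}$: if $\widehat{x}_i>0$ then \eqref{KT-2} forces $\nu_i=1$, so $\widehat{w}_i=\widehat{x}_i+\rfrc{\lambda}{L}>\rfrc{\lambda}{L}$, whence $\text{Shr}_{\rfrc{\lambda}{L}}(\widehat{w}_i)=\widehat{x}_i$ and $\widehat{d}_i=1$; symmetrically for $\widehat{x}_i<0$; and if $\widehat{x}_i=0$ then $\widehat{w}_i=\rfrc{\lambda}{L}\nu_i\in[-\rfrc{\lambda}{L},\rfrc{\lambda}{L}]$, so $\text{Shr}_{\rfrc{\lambda}{L}}(\widehat{w}_i)=0=\widehat{x}_i$ and $\widehat{d}_i=0$. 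In particular the $\widehat{D}$ built from $\widehat{\Bw}$ by the rule in the statement is the relevant one and $\widehat{D}^{[k]}=\widehat{D}^{[k+1]}=\widehat{D}$. Running the forward computation in reverse, $R\widehat{\Bw}+\widehat{\Bh}=(I-\rfrc{1}{L}A^{\T}A)\bigl(\widehat{D}^2\widehat{\Bw}-\rfrc{\lambda}{L}\widehat{\Bd}\bigr)+\rfrc{1}{L}A^{\T}\Bb=(I-\rfrc{1}{L}A^{\T}A)\widehat{\Bx}+\rfrc{1}{L}A^{\T}\Bb=\widehat{\Bx}+\rfrc{1}{L}A^{\T}(\Bb-A\widehat{\Bx})=\widehat{\Bx}+\rfrc{\lambda}{L}\Bnu=\widehat{\Bw}$, so $\bpm\widehat{\Bw}\\1\epm$ is fixed by $\Ra$, and since its last entry is $1$ it is a nonzero vector, hence an eigenvector for eigenvalue $1$.

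The argument is entirely elementary; the only place that needs care — the \emph{main obstacle}, such as it is — is the bookkeeping at the boundary values $\widehat{w}_i=\pm\rfrc{\lambda}{L}$ and $\widehat{x}_i=0$, where $\nu_i$ is pinned down only to an interval, together with making sure that the two diagonal matrices $\widehat{D}^{[k]},\widehat{D}^{[k+1]}$ appearing inside $\Ra$ genuinely coincide with the one produced by the shrinkage rule, so that \eqref{ISTA-x-w} may legitimately be invoked on both sides. Everything else is direct substitution.
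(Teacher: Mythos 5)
Your proposal is correct, and it establishes the equivalence by the same basic mechanism as the paper: unwind the eigenvector equation $\Ra\bpm \widehat{\Bw}\\ 1\epm=\bpm \widehat{\Bw}\\ 1\epm$ into $R\widehat{\Bw}+\widehat{\Bh}=\widehat{\Bw}$ and case-split over the three branches of the shrinkage operator. The organization of the forward direction differs, though: the paper passes to \eqref{ISTA-kkt-proof1}, splits $A=[A_{\Om},A_{\NOm}]$ over the support and its complement, and verifies the KKT conditions blockwise as \eqref{ISTA-lemma3.1a}--\eqref{ISTA-lemma3.1b}, whereas you collapse the fixed-point identity at once, via $\widehat{D}^2\widehat{\Bw}-\rfrc{\lambda}{L}\widehat{\Bd}=\widehat{\Bx}$, to the single vector equation $\widehat{\Bw}=\widehat{\Bx}+\rfrc{1}{L}A^{\T}(\Bb-A\widehat{\Bx})$ and then read off \eqref{KT-2} entry by entry with $\Bnu:=\rfrc{1}{\lambda}A^{\T}(\Bb-A\widehat{\Bx})$, for which \eqref{KT-1} is automatic. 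This buys a shorter argument with no block bookkeeping and makes transparent that $\widehat{\Bw}-\widehat{\Bx}=\rfrc{\lambda}{L}\Bnu$ is exactly the scaled dual vector. In the converse direction you are in fact more explicit than the paper: the paper shows a KKT point is a fixed point of the ISTA map and then appeals to ``one pass of our transformation,'' while you verify $R\widehat{\Bw}+\widehat{\Bh}=\widehat{\Bw}$ directly, including the consistency of the flag matrix $\widehat{D}$ with the constructed $\widehat{\Bw}$ at the boundary cases, which is precisely the point where care is needed.
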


\begin{proof}
	From \eqref{ISTA-Raug}, we have
	\be \label{ISTA-kkt-proof1}
	(A^{\T}A\widehat{D}^2)\widehat{\Bw} + L(I-\widehat{D}^2)\widehat{\Bw} = \rfrc{\lambda}{L}A^{\T}A  \widehat{\Bd} +  A^{\T}\Bb -\lambda\widehat{\Bd}
	\ee
	and with the definition
	\be\label{ISTA-shrw}
	\widehat{x}_i=\text{Shr}_{\rfrc{\lambda}{L}}(\widehat{w}_i)=
	\left\{\begin{array}{lll}
		\widehat{w}_i - \rfrc{\lambda}{L} & \text{if} ~ \widehat{w}_i > \rfrc{\lambda}{L}
		& \Longleftrightarrow ~~ \widehat{d}_i = 1\\
		0 &  \text{if} ~ -\rfrc{\lambda}{L} \leq \widehat{w}_i \leq \rfrc{\lambda}{L}
		& \Longleftrightarrow ~~ \widehat{d}_i = 0\\
		\widehat{w}_i + \rfrc{\lambda}{L} & \text{if} ~ \widehat{w}_i < -\rfrc{\lambda}{L}
		& \Longleftrightarrow ~~ \widehat{d}_i = -1
	\end{array}\right.
	~~~~
	\ee
	we define a set $\Om$ s.t.
	$$
	\Om =  \left\{{i\in\left\{1,\cdots,n\right\}:|\widehat{d}_i| = 1}\right\} = \left\{{i\in\left\{1,\cdots,n\right\}:x_i \neq 0 }\right\}
	$$
	and denote $\NOm$ as the complement set of $\Om$. Then immediately by \eqref{ISTA-shrw}
	\be\label{ISTA-set}
	\widehat{\Bd}_{\NOm}=0, \quad \widehat{\Bx}_{\NOm}=0 , \quad
	\widehat{\Bw}_{\NOm} \in [-\rfrc{\lambda}{L},\rfrc{\lambda}{L}]
	\quad \text{and} \quad
	{(\widehat{\Bd}_{\Om})}_i =
	\begin{cases}
		1 & \text{if} ~ {(\widehat{\Bx}_{\Om})}_i > 0\\
		-1 & \text{if} ~ {(\widehat{\Bx}_{\Om})}_i <0
	\end{cases}.
	\ee
	We split matrix $A$ and do the permutation such that $A=[A_{\Om}, A_{\NOm}]$. So \eqref{ISTA-kkt-proof1} becomes
	\be\label{ISTA-kkt-proof2}
	\begin{split}
		\bpm A_{\Om}^{\T}A_{\Om} &0 \\ A_{\NOm}^{\T}A_{\Om}& 0 \epm \bpm \widehat{\Bw}_{\Om}\\ \widehat{\Bw}_{\NOm} \epm
		+ L\bpm 0_{\Om,\Om} & 0_{\Om,\NOm} \\ 0_{\NOm,\Om} & I_{\NOm,\NOm} \epm  \bpm \widehat{\Bw}_{\Om}\\ \widehat{\Bw}_{\NOm} \epm \\
		= \rfrc{\lambda}{L} \bpm A_{\Om}^{\T}A_{\Om} &   A_{\Om}^{\T}A_{\NOm} \\ A_{\NOm}^{\T}A_{\Om} &   A_{\NOm}^{\T}A_{\NOm} \epm \bpm \widehat{\Bd}_{\Om} \\ 0 \epm
		+ \bpm (A^{\T}\Bb)_{\Om} \\ (A^{\T}\Bb)_{\NOm} \epm - \lambda \bpm \widehat{\Bd}_{\Om} \\ 0 \epm.
	\end{split}
	\ee
	
	To satisfy the KKT condition, it is sufficient to show the following two equations
	\begin{eqnarray}
	\label{ISTA-lemma3.1a} A_{\Om}^{\T}(\Bb-A_{\Om}\widehat{\Bx}_{\Om}-A_{\NOm}\widehat{\Bx}_{\NOm})&=&\begin{cases}
	\lambda & \text{if} ~ {(\widehat{\Bx}_{\Om})}_i > 0\\
	-\lambda & \text{if} ~ {(\widehat{\Bx}_{\Om})}_i <0
	\end{cases} \\
	\label{ISTA-lemma3.1b}|A_{\NOm}^{\T}(\Bb-A_{\Om}\widehat{\Bx}_{\Om}-A_{\NOm}\widehat{\Bx}_{\NOm})| &\leq& \lambda.
	\end{eqnarray}
	To show \eqref{ISTA-lemma3.1a}, considering the case in $\Om$
	\begin{equation*}
	\begin{array}{rcl}
		\text{Equation} ~\eqref{ISTA-kkt-proof2}&\Longrightarrow & A_{\Om}^{\T}A_{\Om}\widehat{\Bw}_{\Om}
		= \rfrc{\lambda}{L}A_{\Om}^{\T}A_{\Om} \widehat{\Bd}_{\Om} + (A^{\T}\Bb)_{\Om} - \lambda \widehat{\Bd}_{\Om} \\
		&\Longrightarrow & A_{\Om}^{\T}A_{\Om}(\widehat{\Bw}_{\Om}-\rfrc{\lambda}{L} \widehat{\Bd}_{\Om})=(A^{\T}\Bb)_{\Om} - \lambda \widehat{\Bd}_{\Om} \\
		&\Longrightarrow & (A^{\T}\Bb)_{\Om}-A_{\Om}^{\T}A_{\Om}\widehat{\Bx}_{\Om}=\lambda \widehat{\Bd}_{\Om}.
	\end{array}
	\end{equation*}
	To show \eqref{ISTA-lemma3.1b}, considering the case in $\NOm$
	\begin{equation*}
	\begin{array}{rcl}
	\text{Equation} ~\eqref{ISTA-kkt-proof2} 
	 &\Longrightarrow & A_{\NOm}^{\T}A_{\Om}\widehat{\Bw}_{\Om}+L \widehat{\Bw}_{\NOm}
	= \rfrc{\lambda}{L}A_{\NOm}^{\T}A_{\Om} \widehat{\Bd}_{\Om} + (A^{\T}\Bb)_{\NOm}  \notag\\
	 &\Longrightarrow & A_{\NOm}^{\T}A_{\Om}(\widehat{\Bw}_{\Om}-\rfrc{\lambda}{L} \widehat{\Bd}_{\Om})+L \widehat{\Bw}_{\NOm}=(A^{\T}\Bb)_{\NOm} \notag\\
	 &\Longrightarrow & A_{\NOm}^{\T}A_{\Om}\widehat{\Bx}_{\Om}+L \widehat{\Bw}_{\NOm}=(A^{\T}\Bb)_{\NOm} \notag \label{ISTA-tmp1}\\
	 &\Longrightarrow & L \widehat{\Bw}_{\NOm}= (A^{\T}\Bb)_{\NOm}-A_{\NOm}^{\T}A_{\Om}\widehat{\Bx}_{\Om} \\
	 &\Longrightarrow&|A_{\NOm}^{\T}(\Bb-A_{\Om}\widehat{\Bx}_{\Om})|\leq \lambda. 
	\end{array}
	\end{equation*}

	Conversely, from KKT condition \eqref{KT-1}, we have $\rfrc{1}{L}A^{\T}\Bb - \rfrc{1}{L}A^{\T}A\widehat{\Bx} = \rfrc{\lambda}{L}\Bnu$ and then $\widehat{\Bx} = (I- \rfrc{1}{L}A^{\T}A) \widehat{\Bx} + \rfrc{1}{L}A^{\T}\Bb - \rfrc{\lambda}{L}\Bnu$. By KKT condition \eqref{KT-2}, we obtain \begin{eqnarray*}
		\widehat{\Bx} &=& \text{Shr}_{\rfrc{\lambda}{L}}((I - \rfrc{1}{L}A^{\T}A)\widehat{\Bx} + \rfrc{1}{L}A^{\T}\Bb) .
	\end{eqnarray*}
	This shows that KKT point of the LASSO can be written in the form of ISTA iterates. Then by one pass of our transformation, we get the matrix recurrence form as \eqref{ISTA-Raug}.
\qquad \end{proof}

\subsection{FISTA as a Matrix Recurrence} \label{sec:FISTArecur}
Similar to ISTA, we use auxiliary variables $\widetilde{\Bw}^{[k]}, \widetilde{D}^{[k]}$ to replace variable $\widetilde{\Bx}^{[k]}$ for carrying the FISTA iterations. We set
\be\label{FISTA-def-w}
\widetilde{\Bw}^{[k]} = (I - \rfrc{1}{L}A^{\T}A)\By^{[k]} + \rfrc{1}{L}A^{\T}\Bb.
\ee
Hence,
\be\label{FISTA-x-w}
\widetilde{\Bx}^{[k+1]} = \text{Shr}_{\rfrc{\lambda}{L}}(\widetilde{\Bw}^{[k]}) = (\widetilde{D}^{[k]})^2\widetilde{\Bw}^{[k]} - \rfrc{\lambda}{L}\widetilde{\Bd}^{[k]}.	
\ee
where for all $k$, the matrix $\widetilde{D}^{[k]} = \text{diag}(\widetilde{\Bd}^{[k]})$, and the vector $\widetilde{\Bd}^{[k]}$ is defined elementwise as
\begin{eqnarray}
\widetilde{d}_i^{[k]} = \text{sign}(\text{Shr}_{\rfrc{\lambda}{L}}\widetilde{w}_i^{[k]})) = &\begin{cases}
{1} & \text{if} ~ \widetilde{w}_i^{[k]} > \rfrc{\lambda}{L} \\
0  & \text{if} ~ -\rfrc{\lambda}{L} \leq \widetilde{w}_i^{[k]} \leq \rfrc{\lambda}{L}\\
{-1} & \text{if} ~ \widetilde{w}_i^{[k]} < -\rfrc{\lambda}{L}. \\
\end{cases} \label{FISTA-def-D}
\end{eqnarray}
Using \eqref{FISTA-def-w}, \eqref{FISTA-x-w} and the updating formula in Alg. 2, we arrive at
\be
\begin{array}{rcl}\label{eq-S-P-Q}
\widetilde{\Bw}^{[k+1]} &=& (I - \rfrc{1}{L}A^{\T}A)\left[\widetilde{\Bx}^{[k]}+\frac{t^{[k]}-1}{t^{[k+1]}} (\widetilde{\Bx}^{[k]}-\widetilde{\Bx}^{[k-1]})\right] + \rfrc{1}{L} A^{\T}\Bb \vspace{.5mm} \\ 
&=& (I- \rfrc{1}{L} A^{\T}A) 
\left[(\frac{t^{[k]}-1}{t^{[k+1]}}+1)((\widetilde{D}^{[k]})^2\widetilde{\Bw}^{[k]} - \rfrc{\lambda}{L}\widetilde{\Bd}^{[k]}) \right] \vspace{.5mm}\\
&& -(I- \rfrc{1}{L} A^{\T}A) 
\left[\frac{t^{[k]}-1}{t^{[k+1]}}( (\widetilde{D}^{[k-1]})^2\widetilde{\Bw}^{[k-1]} + \rfrc{\lambda}{L}\widetilde{\Bd}^{[k-1]})\right] 
+\rfrc{1}{L}A^{\T}\Bb \vspace{.5mm}\\
&=& (1+\frac{t^{[k]}-1}{t^{[k+1]}}) \left[(I- \rfrc{1}{L} A^{\T}A )(\widetilde{D}^{[k]})^2\right]\widetilde{\Bw}^{[k]} \vspace{.5mm}\\
&&- \frac{t^{[k]}-1}{t^{[k+1]}}\left[(I- \rfrc{1}{L} A^{\T}A )(\widetilde{D}^{[k-1]})^2\right]\widetilde{\Bw}^{[k-1]}  \vspace{.5mm}\\
&& + (I- \rfrc{1}{L} A^{\T}A ) \left[ - (1+\frac{t^{[k]}-1}{t^{[k+1]}})\rfrc{\lambda}{L}\widetilde{\Bd}^{[k]}+ \frac{t^{[k]}-1}{t^{[k+1]}} \rfrc{\lambda}{L}\widetilde{\Bd}^{[k-1]}\right]
 + \rfrc{1}{L} A^{\T}\Bb \vspace{.5mm}\\
&=& (1+\tau^{[k]})\widetilde{R}^{[k]}\widetilde{\Bw}^{[k]} - \tau^{[k]} \widetilde{R}^{[k-1]}\widetilde{\Bw}^{[k-1]} + \bar{\Bh} \vspace{.5mm}\\
& =&  P^{[k]}\widetilde{\Bw}^{[k]} + Q^{[k-1]}\widetilde{\Bw}^{[k-1]} + \bar{\Bh}^{[k]}
\end{array}
\ee
where we denote
\be
\begin{array}{ccl}\label{R-tilde}
	 \tau^{[k]} &=& \frac{t^{[k]}-1}{t^{[k+1]}}\\ 
	 P^{[k]} &=&  (1+\tau^{[k]}) \widetilde{R}^{[k]}\\
	Q^{[k]} &=& -\tau^{[k+1]} \widetilde{R}^{[k]} \\
	 \widetilde{R}^{[k]}  &=& (I- \rfrc{1}{L} A^{\T}A )(\widetilde{D}^{[k]})^2 \\ 
	 \bar{\Bh}^{[k]}  &=& (I- \rfrc{1}{L} A^{\T}A ) \left[ - (1+\tau^{[k]} )\rfrc{\lambda}{L}\widetilde{\Bd}^{[k]}+ \tau^{[k]}  \rfrc{\lambda}{L}\widetilde{\Bd}^{[k-1]}\right]
	 + \rfrc{1}{L} A^{\T}\Bb 
\end{array}
\ee
in the rest of this paper. Note that $R^{[k]}$ in \eqref{h-ista} refers to the mapping at the $k$-th iteration of ISTA while
$\widetilde{R}^{[k]}$ in \eqref{R-tilde} refers to the mapping that would occur if one took one step of ISTA starting at the $k$-th iterate of FISTA.
For the purposes of analysis, the modified FISTA iteration then can be equivalently expressed as in Alg.\ 4.
\begin{center}
\begin{tabular}{@{}llr@{}}\toprule
{Algorithm 4: One pass of modified FISTA}\\
\hline
start with $\widetilde{\Bw}^{[k-1]}$, $\widetilde{\Bw}^{[k]}$, $t^{[k]}$, $\widetilde{D}^{[k-1]}$ and $\widetilde{D}^{[k]}$. \\
\qquad 1. $\widetilde{\Bw}^{[k+1]}= P^{[k]}\widetilde{\Bw}^{[k]} + Q^{[k-1]}\widetilde{\Bw}^{[k-1]} + \bar{\Bh}^{[k]}$
(with $P^{[k]},Q^{[k-1]},\bar{\Bh}^{[k]}$ defined by \eqref{R-tilde}).\\
\qquad 2. $t^{[k+1]} = \frac{1+\sqrt{1+4{t^{[k]}}^2}}{2}$ {so that $\tau^{[k]}  = \frac{t^{[k]}-1}{t^{[k+1]}}$}.\\
\qquad 3. $\widetilde{D}^{[k+1]}=\textsc{{Diag}}(\textsc{sign}(\text{Shr}_{\rfrc{\lambda}{L}}(\widetilde{\Bw}^{[k]})))$.\\
Result is $\widetilde{\Bw}^{[k]}$, $\widetilde{\Bw}^{[k+1]}$, $t^{[k+1]}$, $\widetilde{D}^{[k]}$ and $\widetilde{D}^{[k+1]}$ for next pass.\\
\hline \label{Alg:FISTA-2}
\end{tabular}
\end{center}
Step 1 of above procedure can also be formulated as a homogeneous matrix recurrence analogous to \eqref{ISTA-Raug} for ISTA, but
with a larger (approximately double) dimension: 
\be \label{FISTA-Naug}
\begin{aligned}
&\bpm \widetilde{\Bw}^{[k+1]} \\ \widetilde{\Bw}^{[k]} \\ 1 \epm &=&\Nak \bpm \widetilde{\Bw}^{[k]} \\ \widetilde{\Bw}^{[k-1]} \\ 1 \epm
=\bpm N^{[k]} & \widetilde{\Bh}^{[k]}\\0&1 \epm \bpm \widetilde{\Bw}^{[k]} \\ \widetilde{\Bw}^{[k-1]} \\ 1 \epm\\
&&=& \bpm P^{[k]} & Q^{[k-1]} & \bar{\Bh}^{[k]} \\ I & 0 & 0 \\ 0 & 0 & 1 \epm
\bpm \widetilde{\Bw}^{[k]} \\ \widetilde{\Bw}^{[k-1]} \\ 1 \epm.
\end{aligned}
\ee
We denote
$
N^{[k]} =\bpm P^{[k]} & Q^{[k-1]} \\ I & 0 \epm ~~ \text{and} ~~ \widetilde{\Bh}^{[k]} = \bpm \bar{\Bh}^{[k]} \\0 \epm
$
such that
$\Nak = \bpm N^{[k]} & \widetilde{\Bh}^{[k]}\\0&1 \epm $ in the remainder of this paper.

{The following lemma shows the fixed point of Alg.\ 4 is a KKT point of the LASSO problem and vice versa,
analogous to Lemma \ref{ISTA-KT-2} for ISTA.}
\begin{lemma} \label{FISTA-KT-2}
	Suppose $\bpm \widetilde{\Bw}_1 \\\widetilde{\Bw}_2 \\1 \epm$ is an eigenvector corresponding to eigenvalue 1 of $\Na$ (omitting $[k]$)  in \eqref{FISTA-Naug},
	then $\widetilde{\Bw}_1 = \widetilde{\Bw}_2 := \widetilde{\Bw} $.  Suppose further that $\widetilde{D} = \widetilde{D}^{[k+1]} = \widetilde{D}^{[k]}=\textsc{{Diag}}(\widetilde{\Bd})$ with entries $\widetilde{d}_i = 1$ if $\widetilde{w}_i > \rfrc{\lambda}{L}$, $\widetilde{d}_i = -1$ if $\widetilde{w}_i < -\rfrc{\lambda}{L}$ and $\widetilde{d}_i = 0$ if $-\rfrc{\lambda}{L} \leq \widetilde{w}_i \leq \rfrc{\lambda}{L}$.
	Then the variables defined by $\widetilde{\Bx}=\text{Shr}_{\rfrc{\lambda}{L}}(\widetilde{\Bw})$ satisfies the $1$st order KKT conditions \eqref{KT-1} \eqref{KT-2}. Conversely, if $\widetilde{\Bx}$ and $\Bnu=\rfrc{1}{\lambda}A^{\T}(\Bb-A\widetilde{\Bx})$ satisfy the KKT conditions, then $\begin{pmatrix}\widetilde{\Bw} \\\widetilde{\Bw} \\1 \end{pmatrix}$, with $\widetilde{\Bw} = \widetilde{\Bx} + \rfrc{\lambda}{L}\Bnu$, is an eigenvector of $\Na$ corresponding to eigenvalue 1, where $\Na$ is defined as in \eqref{FISTA-Naug} and $\widetilde{D} = \widetilde{D}^{[k+1]} = \widetilde{D}^{[k]}=\textsc{{Diag}}(\widetilde{\Bd})$ with entries $\widetilde{d}_i = 1$ if $\widetilde{w}_i> \rfrc{\lambda}{L}$, $\widetilde{d}_i = -1$ if $\widetilde{w}_i < -\rfrc{\lambda}{L}$ and $\widetilde{d}_i = 0$ if $-\rfrc{\lambda}{L} \leq \widetilde{w}_i \leq \rfrc{\lambda}{L}$.
\end{lemma}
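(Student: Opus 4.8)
The plan is to mimic the structure of the proof of Lemma \ref{ISTA-KT-2}, carrying the extra bookkeeping forced by the doubled state vector. First I would establish the claim $\widetilde{\Bw}_1 = \widetilde{\Bw}_2$. Reading off the middle block row of the eigenvector equation $\Na \bpm \widetilde{\Bw}_1 \\ \widetilde{\Bw}_2 \\ 1 \epm = \bpm \widetilde{\Bw}_1 \\ \widetilde{\Bw}_2 \\ 1 \epm$, the $I$ in position $(2,1)$ of $N$ gives immediately $\widetilde{\Bw}_1 = \widetilde{\Bw}_2$; call the common value $\widetilde{\Bw}$. (The bottom row is the trivial identity $1=1$.) So from here on only the top block row matters, and it reads $P^{[k]}\widetilde{\Bw} + Q^{[k-1]}\widetilde{\Bw} + \bar{\Bh}^{[k]} = \widetilde{\Bw}$.

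Next I would substitute the definitions from \eqref{R-tilde}. With $\widetilde{D}^{[k]} = \widetilde{D}^{[k-1]} = \widetilde{D}$ (the hypothesis pins the diagonal matrices together, exactly as in the ISTA lemma), we have $P^{[k]} = (1+\tau^{[k]})(I-\rfrc{1}{L}A^{\T}A)\widetilde{D}^2$ and $Q^{[k-1]} = -\tau^{[k]}(I-\rfrc{1}{L}A^{\T}A)\widetilde{D}^2$, so $P^{[k]} + Q^{[k-1]} = (I-\rfrc{1}{L}A^{\T}A)\widetilde{D}^2$ — the two relaxation coefficients telescope to $1$, which is the whole point of why FISTA shares ISTA's fixed points. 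Likewise the $\widetilde{\Bd}^{[k]}$ and $\widetilde{\Bd}^{[k-1]}$ terms in $\bar{\Bh}^{[k]}$ collapse, since $\widetilde{\Bd}^{[k]} = \widetilde{\Bd}^{[k-1]} = \widetilde{\Bd}$ and $-(1+\tau^{[k]}) + \tau^{[k]} = -1$, leaving $\bar{\Bh}^{[k]} = -(I-\rfrc{1}{L}A^{\T}A)\rfrc{\lambda}{L}\widetilde{\Bd} + \rfrc{1}{L}A^{\T}\Bb$. Thus the eigenvector equation reduces to precisely $[(I-\rfrc{1}{L}A^{\T}A)\widetilde{D}^2]\widetilde{\Bw} - (I-\rfrc{1}{L}A^{\T}A)\rfrc{\lambda}{L}\widetilde{\Bd} + \rfrc{1}{L}A^{\T}\Bb = \widetilde{\Bw}$, which is exactly the fixed-point equation \eqref{ISTA-Raug} underlying Lemma \ref{ISTA-KT-2} (with tildes in place of hats). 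At this point I would invoke the forward direction of Lemma \ref{ISTA-KT-2} verbatim: $\widetilde{\Bx} = \text{Shr}_{\rfrc{\lambda}{L}}(\widetilde{\Bw})$ satisfies the KKT conditions \eqref{KT-1}–\eqref{KT-2}.

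For the converse, suppose $\widetilde{\Bx}$ and $\Bnu = \rfrc{1}{\lambda}A^{\T}(\Bb - A\widetilde{\Bx})$ satisfy KKT. By the converse half of Lemma \ref{ISTA-KT-2}, with $\widetilde{\Bw} = \widetilde{\Bx} + \rfrc{\lambda}{L}\Bnu$, the pair $(\widetilde{\Bw}, \widetilde{D})$ satisfies the ISTA fixed-point equation and the sign bookkeeping \eqref{ISTA-set}. I then need to check that $\bpm \widetilde{\Bw} \\ \widetilde{\Bw} \\ 1 \epm$ is fixed by $\Na$: the bottom row is $1=1$; the middle row is $\widetilde{\Bw} = \widetilde{\Bw}$; and the top row is $P^{[k]}\widetilde{\Bw} + Q^{[k-1]}\widetilde{\Bw} + \bar{\Bh}^{[k]} = \widetilde{\Bw}$, which by the same telescoping of $\tau^{[k]}$-coefficients as above is identical to the ISTA fixed-point equation already known to hold. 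Hence the stacked vector is an eigenvector of $\Na$ for eigenvalue $1$.

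The one genuinely FISTA-specific point — and the only place where the argument is not a transcription of Lemma \ref{ISTA-KT-2} — is the telescoping identities $P^{[k]} + Q^{[k-1]} = \widetilde{R}^{[k]}$ and the analogous cancellation in $\bar{\Bh}^{[k]}$, which rely on the algebraic fact that the FISTA weights $(1+\tau^{[k]})$ and $-\tau^{[k]}$ sum to $1$ regardless of the value of $\tau^{[k]}$. This is elementary but is the crux: it is what makes the momentum term vanish at a fixed point, so I would state it explicitly rather than fold it silently into the computation. Everything else is routine, and the hypothesis $\widetilde{D}^{[k+1]} = \widetilde{D}^{[k]} = \widetilde{D}$ (together with the consequence $\widetilde{D}^{[k-1]} = \widetilde{D}$, which follows because the eigenvector condition forces $\widetilde{\Bw}^{[k-1]} = \widetilde{\Bw}^{[k]}$ and $\widetilde{D}$ is a function of $\widetilde{\Bw}$) is what licenses dropping the superscripts so that the ISTA lemma applies directly.
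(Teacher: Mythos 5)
Your proposal is correct and follows essentially the same route as the paper's proof: the middle block row forces $\widetilde{\Bw}_1=\widetilde{\Bw}_2$, the FISTA weights $(1+\tau^{[k]})$ and $-\tau^{[k]}$ telescope once the flag matrices coincide so the fixed-point equation collapses to the ISTA equation \eqref{ISTA-kkt-proof1}, and both directions are then inherited from Lemma \ref{ISTA-KT-2} (the paper's converse phrases this as $\By=\widetilde{\Bx}$ making the momentum term vanish, which is the same cancellation you state explicitly). No gaps; your explicit remark that $\widetilde{D}^{[k-1]}=\widetilde{D}$ is forced by $\widetilde{\Bw}^{[k-1]}=\widetilde{\Bw}^{[k]}$ is a small clarification the paper leaves implicit.
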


\begin{proof}
	The first statement is directly from the second block row of \eqref{FISTA-Naug}.
	With $\widetilde{D} = \widetilde{D}^{[k+1]} = \widetilde{D}^{[k]}$, a fixed
	point of \eqref{FISTA-Naug}
	becomes (omitting the superscripts $k$)
	\be
	\bpm \widetilde{\Bw} \\ \widetilde{\Bw} \\ 1 \epm = \bpm P & Q & \bar{\Bh} \\ I & 0 & 0 \\ 0 & 0 & 1 \epm
	\bpm \widetilde{\Bw} \\ \widetilde{\Bw} \\ 1 \epm
	\ee
yielding	
	\be \label{FISTA-kkt-proof1}
	(A^{\T}A\widetilde{D}^2)\widetilde{\Bw} + L(I-\widetilde{D}^2)\widetilde{\Bw} = \rfrc{\lambda}{L}A^{\T}A  \widetilde{\Bd} +  A^{\T}\Bb -\lambda\widetilde{\Bd}
	\ee
	which is exactly \eqref{ISTA-kkt-proof1}. So the rest part is the same as ISTA.
	
	Conversely, for the given KKT point $\widetilde{\Bx}$, If we define $\By = \widetilde{\Bx}+ \frac{t^{[k-1]}-1}{t^{[k]}}(\widetilde{\Bx}-\widetilde{\Bx})$ where $t^{[k]}$ is updated as $t^{[k+1]} = \frac{1+\sqrt{1+4{t^{[k]}}^2}}{2}$, one could show, just as in the proof of Theorem \ref{ISTA-KT-2}, that
	$\widetilde{\Bx} = \text{Shr}_{\rfrc{\lambda}{L}}((I - \rfrc{1}{L}A^{\T}A)\widetilde{\Bx} + \rfrc{1}{L}A^{\T}\Bb) = \text{Shr}_{\rfrc{\lambda}{L}}((I - \rfrc{1}{L}A^{\T}A)\By + \rfrc{1}{L}A^{\T}\Bb)$. This is exactly the FISTA iterates. By one pass of our  transformation, we obtain the matrix recurrence form as \eqref{FISTA-Naug}.
\qquad \end{proof}
For the preparation of the further discussion, we make three remarks.

(a). $\tau^{[k]} \longrightarrow 1$ from below as $k \longrightarrow \infty$.

(b). $\widetilde{R}^{[k]}=R^{[k]}$ if $\widehat{D}^{[k]} = \widetilde{D}^{[k]}$ and $\bar{\Bh}^{[k]} = \widehat{\Bh}^{[k]}$ if $\widehat{D}^{[k]} = \widetilde{D}^{[k]} = \widetilde{D}^{[k-1]}$.  This observation relates the $\Ra$ to $\Na$. It is the foundation upon which we establish the properties to compare ISTA and FISTA in Section \ref{sec:accel}.

(c). One main difference between operators of ISTA and FISTA (i.e. $\Ra$ and $\Na$) is that $\Rak$ is fixed when the flag matrix is fixed while $\Nak$ changes at each step $k$ even if the flag matrix is fixed. In other words, for all $k$, $\Rak= {\bf R}^{[k+1]}_{\bf aug}$ if $\widehat{D}^{[k]} = \widehat{D}^{[k+1]}$.  But $\Nak \neq {\bf N}^{[k+1]}_{\bf aug}$ even if $\widetilde{D}^{[k]} = \widetilde{D}^{[k+1]}$. The reason is that $\Nak$ depends on the changing stepsize $\tau^{[k]}$. Nevertheless, one can still use the same similar argument for $\Nak$ as for $\Rak$ by additional lemmas, as we will show in Section \ref{sec:regimes}. 

\subsection{Properties of $\Rak$} \label{sec:rak}
It is seen that $\Rak$ and $\Nak$ play key roles in the convergence. Hence we now focus on the spectral properties of $\Rak$ in this part and $\Nak$ in next part. Before that, we first recall some theory relating the spectral radius to the matrix norm. 
\begin{theorem}\label{thm:spectral property}Let $\rho(M)$ denote the spectral radius of an arbitrary square real matrix $M$, and let $\|M\|_2 $ denote the matrix 2-norm. Then we have the following:
	
	{\em (a)}.  $\rho(M)\leq\|M\|_2 $.

	{\em (b)}. If $\|M\|_2 = \rho(M)$ then for eigenvalue $\lambda$ such that $|\lambda|=\rho(M)$, the algebraic and geometric multiplicities of $\lambda$ are the same (all Jordan blocks for $\lambda$ is $1\times1$). Such a matrix is said to be a member of Class M \cite{H64,J72}.
	
	{\em (c)}. If a $\lambda$ such that $|\lambda|=\rho(M)$ has a Jordan block of dimension larger than $1$ (the geometric multiplicity is strictly less than the algebraic multiplicity),
	then for any $\epsilon > 0 $ there exists a matrix norm $\|\cdot\|_G$ (based on a nonsingular
	matrix $G$) such that $\rho(M) < \|M\|_G \leq \rho(M)+ \epsilon$.
\end{theorem}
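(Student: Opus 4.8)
The plan is to prove the three parts in order, since each relies on the preceding one, and the only machinery needed is the Jordan canonical form together with elementary estimates of the $2$-norm.

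For part (a), the plan is to take a unit eigenvector $x$ with $Mx = \lambda x$ and $|\lambda| = \rho(M)$, possibly complex, and observe that $\rho(M) = |\lambda| = \|\lambda x\|_2 = \|Mx\|_2 \le \|M\|_2 \|x\|_2 = \|M\|_2$. (If one insists on staying over the reals, one can instead apply the same argument to $M$ acting on $\mathbb{C}^n$ and note that the complex and real induced $2$-norms of a real matrix coincide, or alternatively use $\rho(M)^2 = \rho(M^2) \le \rho(M^{\T}M) = \|M\|_2^2$ via a real eigenvector of the symmetric matrix $M^{\T}M$, though that gives a slightly different route.) This part is essentially immediate.

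For part (b), the plan is to argue by contradiction: suppose $\|M\|_2 = \rho(M)$ but some $\lambda$ with $|\lambda| = \rho(M)$ has a Jordan block of size at least $2$. Write the Jordan form $M = S J S^{-1}$ and exhibit a two-dimensional invariant subspace on which $M$ acts (in a suitable basis) like $\begin{pmatrix} \lambda & 1 \\ 0 & \lambda \end{pmatrix}$. The key computation is that this $2\times 2$ block, hence $M$ restricted to that subspace in the appropriate coordinates, has $2$-norm strictly exceeding $|\lambda|$: for the vector $e_1$ one gets $\|\begin{pmatrix} \lambda & 1 \\ 0 & \lambda \end{pmatrix} e_2\|_2 = \sqrt{1 + |\lambda|^2} > |\lambda|$. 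The subtlety — and I expect this to be the main obstacle — is that the $2$-norm is not invariant under the similarity $S$, so a bound on the Jordan block does not directly bound $\|M\|_2$. The cleanest fix is to scale: replace $\lambda$'s Jordan block $J_\lambda(\lambda)$ by $D_\varepsilon^{-1} J_\lambda(\lambda) D_\varepsilon$ with $D_\varepsilon = \mathrm{diag}(1,\varepsilon,\varepsilon^2,\dots)$, which shrinks the off-diagonal $1$'s to $\varepsilon$ but does nothing to remove the structural fact that a genuine $2\times 2$ block always contributes norm strictly above $|\lambda|$ along the corresponding coordinate direction; combined with $\|M\|_2 \ge$ (norm of $M$ restricted to any invariant subspace, measured intrinsically) one forces $\|M\|_2 > \rho(M)$, the desired contradiction. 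Citing \cite{H64,J72} for the "Class M" terminology closes the part.

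For part (c), the plan is the explicit construction just alluded to. Let $G = S D_\varepsilon$ where $S$ brings $M$ to Jordan form and $D_\varepsilon = \mathrm{diag}(1, \delta, \delta^2, \ldots, \delta^{n-1})$ (with $\delta$ to be chosen in terms of $\varepsilon$), and define $\|x\|_G := \|G^{-1} x\|_2$, with induced matrix norm $\|M\|_G = \|G^{-1} M G\|_2$. Then $G^{-1} M G = D_\varepsilon^{-1} J D_\varepsilon$ is block diagonal, each block being $\lambda_j I + \delta N_j$ for a nilpotent shift $N_j$ (zero if the block is $1\times 1$). Hence $\|G^{-1} M G\|_2 \le \max_j |\lambda_j| + \delta \max_j \|N_j\|_2 = \rho(M) + \delta$, and choosing $\delta = \varepsilon$ (or $\delta$ small enough that the cross terms are absorbed) gives $\|M\|_G \le \rho(M) + \varepsilon$; the lower bound $\rho(M) \le \|M\|_G$ is part (a) applied in the norm $\|\cdot\|_G$, or simply the observation that the spectral radius is similarity-invariant while the $2$-norm dominates it. The only care needed is to verify the routine estimate $\|\lambda_j I + \delta N_j\|_2 \le |\lambda_j| + \delta$ and that the block-diagonal structure lets one take the max over blocks; these are standard and I would not belabor them.
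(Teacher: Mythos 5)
First, note that the paper does not actually prove this theorem: it cites \cite{B13, J72, H64, GL13} and moves on, so your proposal is being measured against the standard arguments rather than an in-paper proof. Your part (a) is fine, and your part (c) construction ($G=SD_\delta$, $D_\delta^{-1}JD_\delta$ shrinking the superdiagonal to $\delta$, norm of a block-diagonal matrix being the max over blocks) is the standard and correct route to the upper bound $\|M\|_G\le\rho(M)+\epsilon$.

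The genuine gap is in part (b), at exactly the point you flag and then wave past. Your computation $\bigl\|\begin{pmatrix}\lambda&1\\0&\lambda\end{pmatrix}e_2\bigr\|_2=\sqrt{1+|\lambda|^2}$ concerns the \emph{representing matrix of $M|_V$ in the Jordan basis}, which is not orthonormal, so it gives no lower bound on the intrinsic operator norm of $M|_V$, hence none on $\|M\|_2$. The proposed fix via $D_\varepsilon$ does not supply the missing bound --- if anything it points the other way, since $\begin{pmatrix}\lambda&\varepsilon\\0&\lambda\end{pmatrix}$ has $2$-norm tending to $|\lambda|$ as $\varepsilon\to 0$, showing the representing matrix alone cannot force $\|M\|_2>\rho(M)$; the assertion that a $2\times2$ block ``always contributes norm strictly above $|\lambda|$ along the corresponding coordinate direction'' is precisely the claim that needs proof. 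Two standard repairs: (i) orthonormalize the Jordan chain, i.e.\ use a Schur basis of the invariant subspace $V$, in which $M|_V=\begin{pmatrix}\lambda&c\\0&\lambda\end{pmatrix}$ with $c\neq 0$ (non-diagonalizability forces $c\neq0$), so the \emph{intrinsic} norm is at least $\sqrt{|\lambda|^2+|c|^2}>\rho(M)$ and $\|M\|_2\ge\|M|_V\|$; or (ii) a power argument: with $Mu=\lambda u$, $Mv=\lambda v+u$, $|\lambda|=\rho(M)=\|M\|_2=:r$ gives $\|M^kv\|_2\le r^k\|v\|_2$ while $M^kv=\lambda^kv+k\lambda^{k-1}u$ gives $\|M^kv\|_2\ge kr^{k-1}\|u\|_2-r^k\|v\|_2$, a contradiction for large $k$. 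Finally, in part (c) the theorem asserts the \emph{strict} inequality $\rho(M)<\|M\|_G$; you only argue $\rho(M)\le\|M\|_G$. Strictness follows in one line from part (b) applied to $G^{-1}MG$ (same spectral radius and same Jordan structure as $M$, and $\|M\|_G=\|G^{-1}MG\|_2$), but since your (b) is the step with the hole, that line should be added once (b) is repaired.
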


We refer reader \cite{B13, J72, H64, GL13} for the proof of the above theorem.

\begin{lemma}\label{lemma:ISTA-norm}
Regarding ISTA, there are three properties of $R^{[k]}$:

{\em (a)}. $\|R^{[k]}\| = \|(I-\rfrc{1}{L}A^{\T}A)(\widehat{D}^{[k]})^2\| \leq 1$.

{\em (b)}. All eigenvalues must lie in the interval $[0,1]$.

{\em (c)}. If there exists one or more eigenvalues equal to $1$, then eigenvalue $1$  must have a complete set of eigenvectors.
\end{lemma}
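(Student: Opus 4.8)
The plan is to exploit the factorization $R^{[k]} = MP$, where $M := I - \frac{1}{L}A^{\T}A$ and $P := (\widehat{D}^{[k]})^2$. Two structural facts underlie everything. First, $M$ is symmetric positive semidefinite with all its eigenvalues in $[0,1]$: since $L = \|A^{\T}A\|_2 = \rho(A^{\T}A)$ and $A^{\T}A \succeq 0$, we have $0 \preceq \frac{1}{L}A^{\T}A \preceq I$, hence $0 \preceq M \preceq I$ and in particular $\|M\|_2 \le 1$. Second, $P$ is a diagonal $0/1$ matrix (the square of a $\{-1,0,1\}$ diagonal matrix), hence an orthogonal projection: $P = P^2 = P^{\T}$ and $\|P\|_2 \le 1$. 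Part (a) is then immediate from submultiplicativity of the matrix $2$-norm, $\|R^{[k]}\| = \|MP\| \le \|M\|\,\|P\| \le 1$.

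For part (b) I would show that every eigenvalue of $R^{[k]} = MP$ is real and lies in $[0,1]$ by relating $MP$ to the symmetric conjugate $PMP$. Let $\mu$ be an eigenvalue of $R^{[k]}$ with eigenvector $v \neq 0$, so $MPv = \mu v$. If $\mu \neq 0$, then $Pv \neq 0$ (otherwise $\mu v = MPv = 0$), and applying $P$ on the left of $MPv = \mu v$ and using $P^2 = P$ gives $PMP(Pv) = \mu\,(Pv)$; thus $\mu$ is an eigenvalue of $PMP = P^{\T}MP$. Since $PMP$ is real symmetric and positive semidefinite with $\|PMP\| \le \|M\| \le 1$, its eigenvalues lie in $[0,1]$, so $\mu \in (0,1]$, and in particular $\mu$ is real. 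The only other possibility is $\mu = 0 \in [0,1]$. (Equivalently, one can observe that $R^{[k]} = (MP)P$ and $P(MP) = PMP$ share the same characteristic polynomial.) This proves (b).

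For part (c), suppose $1$ is an eigenvalue of $R^{[k]}$. Then $\rho(R^{[k]}) \ge 1$, while Theorem \ref{thm:spectral property}(a) together with part (a) gives $\rho(R^{[k]}) \le \|R^{[k]}\| \le 1$. Hence $\|R^{[k]}\|_2 = \rho(R^{[k]}) = 1$, so $R^{[k]}$ is a member of Class M. Theorem \ref{thm:spectral property}(b) then applies to the eigenvalue $1$ (which satisfies $|1| = \rho(R^{[k]})$) and yields that its algebraic and geometric multiplicities coincide, i.e. every Jordan block associated with $1$ is $1\times 1$ and $1$ admits a complete set of eigenvectors.

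The only place that needs a genuine idea is part (b), namely the passage from the non-symmetric product $MP$ to its symmetric conjugate $PMP$ (or, equivalently, $M^{1/2}PM^{1/2}$), after which positive semidefiniteness and the norm bound do all the work; parts (a) and (c) are then bookkeeping on top of (b) and the already-quoted Theorem \ref{thm:spectral property}. The one subtlety worth flagging in (b) is ruling out complex eigenvalues, which is handled automatically since each nonzero eigenvalue is realized as an eigenvalue of a real symmetric matrix.
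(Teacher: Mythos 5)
Your proposal is correct and follows essentially the same route as the paper: part (a) by submultiplicativity, part (b) by passing from the nonsymmetric product to a symmetric positive semidefinite conjugate with the same nonzero eigenvalues (you use $PMP$ with $P=(\widehat{D}^{[k]})^2$, the paper uses $\widehat{D}(I-\rfrc{1}{L}A^{\T}A)\widehat{D}$, which is the identical symmetrization idea), and part (c) by observing $\rho(R^{[k]})=\|R^{[k]}\|_2=1$ and invoking the Class M property of Theorem \ref{thm:spectral property}. No gaps.
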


\begin{proof}
We here omit the pass number $ ^{[k]}$ for simplicity.

(a). $\|R\| = \|(I-\rfrc{1}{L}A^{\T}A)\widehat{D}^2\| \leq \|(I-\rfrc{1}{L}A^{\T}A)\|\| \widehat{D}^2\| \leq 1$.

(b). The eigenvalue of $R$ is the same as the eigenvalue of $R'=\widehat{D}(I-\rfrc{1}{L}A^{\T}A)\widehat{D}$. Noticing $L = \|A^{\T}A\|_2$, we obtain $\|R'\|_2 \leq \|\widehat{D}^2\| \|(I-\rfrc{1}{L}A^{\T}A)\| \leq 1$. In addition, $R'$ is symmetric and a positive semidefinite matrix. Hence all eigenvalues of $R'$ must lie in the interval $[0,1]$. Hence should those of $R$.

(c). Because $\rho(R) = \|R\|_2 = 1$, this statement follows directly from Lemma \ref{thm:spectral property}.
\qquad \end{proof}

\subsection{Properties of $\Nak$} \label{sec:nak-prop}
Now we turn to the spectral analysis of the FISTA operator. Lemma \ref{lemma:FISTA-norm} demonstrates the eigenstructure of $\Nak$ and its relation to that of $\Rak$. 

{
	\begin{lemma}\label{lemma:FISTA-norm}
		We let $\gamma$ and $\beta$ denote the eigenvalue of $N^{[k]}$ and $\widetilde{R}^{[k]}$ respectively. 
		Any eigenvector $\bpm \widetilde{\Bw}_1 \\ \widetilde{\Bw}_2 \epm $ of $N^{[k]}$ corresponding to eigenvalue
		$\gamma$ must satisfy
		$\widetilde{\Bw}_1 = \gamma \widetilde{\Bw}_2$. Suppose $\widetilde{D}^{[k]}=\widetilde{D}^{[k-1]}$ and hence
		$\widetilde{R}^{[k]}=\widetilde{R}^{[k-1]}$, then (omitting index $^{[k]}$) we have the following results:
		
		{\em (a)}.  $\widetilde{\Bw}_2$ must also be an eigenvector of $\widetilde{R}$ with eigenvalue $\beta$, where $\beta$ and $\gamma$ has the relation $ \gamma^2 - \gamma(1+\tau)\beta + \tau\beta=0$.
		
		{\em (b)}. For $0 <\tau \leq 1$, the eigenvalues of $N$ defined in \eqref{FISTA-Naug} lie in the closed disk in the complex plane with center $\rfrc{1}{2}$ and radius $\rfrc{1}{2}$, denoted as $\Dcal(\rfrc{1}{2}, \rfrc{1}{2})$. In particular, if $N$ has any eigenvalue with absolute value $\rho(N)=1$, then that eigenvalue must be exactly 1.
		
		{\em (c)}. $N$ has an eigenvalue equal to $1$ if and only if $\widetilde{R}$ has an eigenvalue equal to $1$.
		
		{\em (d)}. Assuming $\tau<1$, then if $N^{[k]}$ has an eigenvalue equal to 1, this eigenvalue must have a complete set of eigenvectors.
	\end{lemma}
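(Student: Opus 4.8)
The plan is to work entirely with the $2\times 2$ block structure of $N^{[k]}=\bpm P^{[k]} & Q^{[k-1]} \\ I & 0 \epm$, where $P^{[k]}=(1+\tau)\widetilde{R}^{[k]}$ and $Q^{[k-1]}=-\tau\widetilde{R}^{[k-1]}$ with $\tau=\tau^{[k]}$, and under the hypothesis $\widetilde{D}^{[k]}=\widetilde{D}^{[k-1]}$ so that $\widetilde{R}^{[k]}=\widetilde{R}^{[k-1]}=:\widetilde{R}$. Writing an eigenvector of $N^{[k]}$ as $\bpm \widetilde{\Bw}_1 \\ \widetilde{\Bw}_2 \epm$, the bottom block row of the eigenvalue equation is $\widetilde{\Bw}_1=\gamma\widetilde{\Bw}_2$, and $\widetilde{\Bw}_2\neq 0$ since otherwise the whole vector vanishes. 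For part (a), substitute $\widetilde{\Bw}_1=\gamma\widetilde{\Bw}_2$ into the top block row $(1+\tau)\widetilde{R}\widetilde{\Bw}_1-\tau\widetilde{R}\widetilde{\Bw}_2=\gamma\widetilde{\Bw}_1$; this collapses to $[(1+\tau)\gamma-\tau]\widetilde{R}\widetilde{\Bw}_2=\gamma^2\widetilde{\Bw}_2$. Since $\tau>0$, the coefficient $(1+\tau)\gamma-\tau$ cannot vanish simultaneously with $\gamma^2\widetilde{\Bw}_2=0$ (that would force $\gamma=0$ and then $-\tau=0$), so $\widetilde{\Bw}_2$ is an eigenvector of $\widetilde{R}$ with eigenvalue $\beta=\gamma^2/[(1+\tau)\gamma-\tau]$, i.e.\ $\gamma^2-(1+\tau)\beta\gamma+\tau\beta=0$.

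For part (b), note $\widetilde{R}=(I-\rfrc{1}{L}A^{\T}A)(\widetilde{D}^{[k]})^2$ has exactly the form treated in Lemma \ref{lemma:ISTA-norm}, so every eigenvalue $\beta$ of $\widetilde{R}$ satisfies $\beta\in[0,1]$. Solving the quadratic from (a) gives $\gamma=\tfrac12\big[(1+\tau)\beta\pm\sqrt{(1+\tau)^2\beta^2-4\tau\beta}\,\big]$. If the discriminant is $\ge 0$, both roots are real and $\ge 0$ (their sum $(1+\tau)\beta$ and product $\tau\beta$ are nonnegative), and the larger root is $\le 1$ because $\sqrt{(1+\tau)^2\beta^2-4\tau\beta}\le 2-(1+\tau)\beta$ — legitimate since $(1+\tau)\beta\le 2$ — reduces, upon squaring, to $\beta\le 1$; hence $\gamma\in[0,1]\subseteq\Dcal(\tfrac12,\tfrac12)$. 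If the discriminant is $<0$, then $|\gamma|^2=\tau\beta$ and $\re\gamma=\tfrac12(1+\tau)\beta$, and $\tau\le 1$ gives $|\gamma|^2\le\re\gamma$, which is equivalent to $|\gamma-\tfrac12|\le\tfrac12$. In either case $\gamma\in\Dcal(\tfrac12,\tfrac12)$, so $\rho(N)\le 1$; and if $|\gamma|=1$, then $|\gamma-\tfrac12|^2\le\tfrac14$ forces $\re\gamma\ge|\gamma|^2=1=|\gamma|$, whence $\gamma=1$.

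Part (c) is immediate from the quadratic in (a): $\gamma=1$ solves $\gamma^2-(1+\tau)\beta\gamma+\tau\beta=0$ iff $1-\beta=0$, i.e.\ $\beta=1$; conversely, if $\widetilde{R}\Bv=\Bv$ then one checks directly that $\bpm \Bv \\ \Bv \epm$ is an eigenvector of $N$ for eigenvalue $1$.

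The substantive step is part (d). Assume $\tau<1$ and suppose, toward a contradiction, that eigenvalue $1$ of $N^{[k]}$ admits a Jordan chain of length $\ge 2$: an eigenvector, which by (a)/(c) has the form $\bpm \Bv \\ \Bv \epm$ with $\widetilde{R}\Bv=\Bv$, and a generalized eigenvector $\bpm \Bu_1 \\ \Bu_2 \epm$ with $(N^{[k]}-I)\bpm \Bu_1 \\ \Bu_2 \epm=\bpm \Bv \\ \Bv \epm$. The bottom block row gives $\Bu_1-\Bu_2=\Bv$; substituting $\Bu_1=\Bu_2+\Bv$ and $\widetilde{R}\Bv=\Bv$ into the top block row $(1+\tau)\widetilde{R}\Bu_1-\tau\widetilde{R}\Bu_2-\Bu_1=\Bv$ collapses it to $(\widetilde{R}-I)\Bu_2=(1-\tau)\Bv$. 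Now apply Lemma \ref{lemma:ISTA-norm}(c) to $\widetilde{R}$: its eigenvalue $1$ is semisimple, so $\ker(\widetilde{R}-I)\cap\mathrm{range}(\widetilde{R}-I)=\{0\}$ and $\R^n=\ker(\widetilde{R}-I)\oplus\mathrm{range}(\widetilde{R}-I)$. But $\Bv\in\ker(\widetilde{R}-I)$ while $(1-\tau)\Bv\in\mathrm{range}(\widetilde{R}-I)$ with $1-\tau\neq 0$, forcing $\Bv=0$, which contradicts $\bpm \Bv \\ \Bv \epm$ being an eigenvector. Hence every Jordan block of $N^{[k]}$ for eigenvalue $1$ is $1\times 1$. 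I expect this last reduction — correctly eliminating the top block row and invoking the direct-sum splitting coming from semisimplicity of the eigenvalue $1$ of $\widetilde{R}$ — to be the only delicate point; parts (a)--(c) amount to the quadratic formula together with the bound $\beta\in[0,1]$ inherited from the ISTA analysis.
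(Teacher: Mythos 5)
Your proof is correct. The preliminary claim and parts (a) and (c) follow essentially the same path as the paper: the second block row gives $\widetilde{\Bw}_1=\gamma\widetilde{\Bw}_2$, substitution into the first block row yields the quadratic $\gamma^2-(1+\tau)\beta\gamma+\tau\beta=0$ (your explicit check that $(1+\tau)\gamma-\tau\neq 0$ is a small improvement in rigor), and $\gamma=1\iff\beta=1$ follows from that quadratic. For part (b) you solve for $\gamma$ directly and split on the discriminant, using $|\gamma|^2=\tau\beta\le\re\gamma=\rfrc{1}{2}(1+\tau)\beta$ in the complex case, whereas the paper shifts by $\rfrc{1}{2}$ and bounds $|\alpha|=|\gamma-\rfrc{1}{2}|\le\rfrc{1}{2}$; these are computationally equivalent routes resting on the same inputs $\beta\in[0,1]$ (inherited from Lemma \ref{lemma:ISTA-norm}) and $\tau\in(0,1]$. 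The genuine divergence is part (d): the paper argues by multiplicity bookkeeping --- each eigenvector $\widetilde{\Bw}_2$ of $\widetilde{R}$ for $\beta=1$ yields eigenvectors of $N$ for $\gamma_1=1$ and $\gamma_2=\tau$, so a defective eigenvalue $1$ could only occur if $\gamma_1=\gamma_2=1$, which $\tau<1$ rules out --- while you suppose a length-two Jordan chain $(N-I)\bpm \Bu_1 \\ \Bu_2 \epm=\bpm \Bv \\ \Bv \epm$ with $\widetilde{R}\Bv=\Bv$, reduce the top block row to $(\widetilde{R}-I)\Bu_2=(1-\tau)\Bv$, and conclude $\Bv=0$ from $\ker(\widetilde{R}-I)\cap\mathrm{range}(\widetilde{R}-I)=\{0\}$, which is exactly the semisimplicity of eigenvalue $1$ of $\widetilde{R}$ guaranteed by Lemma \ref{lemma:ISTA-norm}(c). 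Both arguments use the same two key facts (semisimplicity of $\beta=1$ for the ISTA-type operator and $\tau<1$); your chain argument makes explicit and airtight the step that the paper leaves somewhat informal ("the only possible situation ... is that both $\gamma_1$ and $\gamma_2$ equal $1$"), at the cost of a few extra lines of block algebra.
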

	
	\begin{proof}
		By the definition of $N^{[k]}$ (just after \eqref{FISTA-Naug})
		$$ N^{[k]}\cdot \bpm \widetilde{\Bw}_1 \\ \widetilde{\Bw}_2 \epm =\bpm P^{[k]} & Q^{[k-1]} \\ I & 0 \epm \bpm \widetilde{\Bw}_1 \\ \widetilde{\Bw}_2 \epm
		= \bpm P^{[k]}\widetilde{\Bw}_1+Q^{[k-1]}\widetilde{\Bw}_2 \\ \widetilde{\Bw}_1 \epm
		= \gamma\bpm \widetilde{\Bw}_1 \\ \widetilde{\Bw}_2 \epm$$
		and thus $\widetilde{\Bw}_1 = \gamma \widetilde{\Bw}_2$ is observed from the second row.
		
		(a). $$
		N\cdot \bpm \widetilde{\Bw}_1 \\ \widetilde{\Bw}_2 \epm  \\ = \bpm P & Q \\ I & 0 \epm \bpm \gamma \widetilde{\Bw}_2 \\ \widetilde{\Bw}_2 \epm = \bpm (1+\tau)\gamma \widetilde{R}\widetilde{\Bw}_2 -\tau \widetilde{R}\widetilde{\Bw}_2 \\\gamma \widetilde{\Bw}_2 \epm = \gamma \bpm \gamma \widetilde{\Bw}_2 \\ \widetilde{\Bw}_2 \epm
		$$
		and therefore,
		\be\label{FISTA-beta-gamma}
		\widetilde{R}\widetilde{\Bw}_2 = \frac{\gamma^2}{\left[(1+\tau) \gamma - \tau \right]} \widetilde{\Bw}_2 = \beta \widetilde{\Bw}_2 ~\Longrightarrow~ \gamma^2 - (1+\tau)\gamma\beta + \tau\beta=0.
		\ee

		(b).  We first study the spectrum of matrix $N- \rfrc{1}{2}I$, then the spectrum of $N$ should be a shift by $\rfrc{1}{2}I$. Let $\alpha = \gamma - \rfrc{1}{2}$ be the eigenvalue of $N- \rfrc{1}{2}I$ associated with eigenvector $\bpm \widetilde{\Bw}_1 \\ \widetilde{\Bw}_2 \epm $, then according to \eqref {FISTA-beta-gamma}, $\alpha$ and $\beta$ have the relation
		\begin{equation*}
		\alpha^2 + (1- \beta - \tau \beta) \alpha  + \rfrc{1}{2}\tau \beta - \rfrc{1}{2}\beta + \rfrc{1}{4} =0.
		\end{equation*}
		Note that $\tau \in (0,1]$ and $\beta \in [0,1]$ by definition of $\widetilde{R}$. We consider two situations for the above quadratic equation. First, suppose $\alpha_1$ and $\alpha_2$ are two conjugate complex roots. Then $\alpha_1= \bar{\alpha_2}$, $\alpha_1 + \alpha_2 = \tau\beta+\beta -1$ and $\alpha_1\alpha_2 = \rfrc{1}{2}\tau \beta - \rfrc{1}{2}\beta + \rfrc{1}{4}$ such that
		\begin{equation*}
		|\alpha|^2 = |\alpha_1\bar{\alpha_1}| = |\alpha_1\alpha_2| = \left|\rfrc{1}{2}\tau \beta - \rfrc{1}{2}\beta + \rfrc{1}{4}\right| \leq \frac{1}{4}
		\end{equation*}
		which gives $|\alpha|\leq \rfrc{1}{2}$. The second situation is that two roots are real numbers and must look like
		\begin{equation*}
		\alpha_1 = \frac{1+\tau}{2}\beta + \frac{\sqrt{\beta}\sqrt{(1+\tau)^2\beta-4\tau}}{2} - \rfrc{1}{2} \quad
		\alpha_2 =  \frac{1+\tau}{2}\beta - \frac{\sqrt{\beta}\sqrt{(1+\tau)^2\beta-4\tau}}{2} - \rfrc{1}{2}.
		\end{equation*}
		
		To get the largest possible value of $\alpha$, we only look at $\alpha_1$ because $\alpha_1 \geq \alpha_2$ for any fixed $\beta$. Since $\alpha_1$ is an increasing function of $\beta$ and $\beta \in [0,1]$, the largest real value of $\alpha$ should $\rfrc{1}{2}$ when $\beta = 1$. On the other hand, to get the smallest negative real value of $\alpha$, we only need to look at $\alpha_2$.     One can write $\alpha_2 = \frac{1+\tau}{2}(\beta - \sqrt{\beta^2 - \frac{4\tau}{(1+\tau)^2}})-\rfrc{1}{2}$ to see that $\alpha_2\geq-\rfrc{1}{2}$. So we conclude that if $\alpha$ is real, then $-\rfrc{1}{2} \leq \alpha \leq \rfrc{1}{2}$.
		
		Under both situations, one can conclude $\alpha$ must satisfy $|\alpha|\leq \rfrc{1}{2}$, lying in a disk centered at $0$ with radius $\rfrc{1}{2}$, i.e. $\Dcal(0,\rfrc{1}{2})$. So the eigenvalues of $N$ must lie in the disk $\Dcal(\rfrc{1}{2},\rfrc{1}{2})$ by the shift. Consequently, the only possible eigenvalue on the unit circle is $1$.

		(c). $\gamma_1,\gamma_2$ are the two roots of the quadratic polynomial, i.e. $\gamma^2 - (1+\tau)\gamma\beta + \tau\beta= (\gamma-\gamma_1)(\gamma-\gamma_2)=0$. For given $\beta$, they must satisfy
		$$
		\gamma_1\gamma_2 = \tau\beta  \quad \text{and} \quad \gamma_1 + \gamma_2 = (1+\tau) \beta = \beta + \gamma_1\gamma_2.
		$$
		If $N$ has an eigenvalue $\gamma_1 = 1$, then $\gamma_2 = (1+\tau)\beta - 1 = \beta+\gamma_1\gamma_2 - \gamma_1 = \beta + \gamma_2 -1$, hence $\beta = 1$ must be true and $\widetilde{R}$ has an eigenvalue equal to $1$. Conversely, if $\widetilde{R}$ has an eigenvalue $\beta = 1$, the quadratic polynomial \eqref {FISTA-beta-gamma} will reduce to $ \gamma^2 - (1+\tau)\gamma+ \tau= 0 $, which gives $\gamma_1 = 1$ and $\gamma_2 = \tau$. Then $N$ has an eigenvalue equal to $1$.
		
		(d). Notice in \eqref{FISTA-beta-gamma} that each eigenvalue $\beta$ of $\widetilde{R}$ maps to two eigenvalues of $N$, $\gamma_1$ and $\gamma_2$, and associated eigenvector $\widetilde{\Bw}_2$ of $\widetilde{R}$ maps to two eigenvectors of $N$, $ \bpm \gamma_1 \widetilde{\Bw}_2 \\\widetilde{\Bw}_2 \epm$ and $ \bpm \gamma_2 \widetilde{\Bw}_2 \\\widetilde{\Bw}_2 \epm$. As shown in (c), $N$ has an eigenvalue equal to $1$ if and only if $\widetilde{R}$ has an eigenvalue equal to $1$. Since $R$ and $\widetilde{R}$ have the similar eigenstructure, eigenvalue $1$ of $\widetilde{R}$ must have a complete set of eigenvectors. So the only possible situation that eigenvalue $1$ of $N$ does not have a complete set of eigenvectors is that both $\gamma_1$ and $\gamma_2$ equal to $1$. However, this is impossible because we have shown in (c) that $\beta = 1$ gives $\gamma_1 = 1$ and $\gamma_2 = \tau$ which is close to $1$ but not equal. As a result, if $N$ has an eigenvalue $1$, and then its algebraic and geometric multiplicities coincide.
	\qquad \end{proof}}

\section{Regimes} \label{sec:regimes}
Since the ISTA and FISTA updating steps have been converted into a variation of an eigenproblem in previous sections, we can study the convergence in terms of the spectral properties of the operator $\Ra$ in \eqref{ISTA-Raug} and $\Na$ in \eqref{FISTA-Naug}. Hence in this section, we show how the spectral properties of  $\Ra$, $\Na$ are reflected in the possible convergence ``regimes" that ISTA and  FISTA can encounter.

\subsection{Spectral Properties} \label{sec:spectral}
The eigenvalues of the augmented matrices $\Ra$ and $\Na$ consist of those of $R$, $N$ plus an extra eigenvalue equal to 1, respectively. If $R$ (or $N$) already has an eigenvalue equal to $1$, then the extra eigenvalue $1$ may or may not add a corresponding eigenvector. The next lemma gives limits on the properties of the eigenalue $1$ for any augmented matrix of the general form of  $\Ra$, $\Na$. 

\begin{lemma}\label{lemma:unique}
	Let $\Ma = \bpm M & \Bp \\ 0 & 1 \epm$ be any block upper triangular matrix with a $1\times 1$ lower right block. The matrix $\Ma$ has an eigenvalue $\alpha_1 = 1$ and suppose its corresponding eigenvector has a non-zero last element. We scale that eigenvector to take the form $\bpm \Bw \\ 1 \epm = \Ma \bpm \Bw \\ 1 \epm$.
	If the upper left block $M$ either has no eigenvalue equal to $1$ or the eigenvalue $1$ of $M$ has a complete set of eigenvectors, then $\alpha_1 = 1$ has no non-trivial Jordan block. Furthermore, if the given eigenvector $\bpm \Bw \\ 1 \epm$ is unique, then $M$ has no eigenvalue equal to 1.
\end{lemma}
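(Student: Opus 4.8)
The plan is to work entirely with the block form $\Ma - I = \bpm M - I & \Bp \\ 0 & 0 \epm$ and exploit one structural fact: for every vector $\bpm \Bv \\ c \epm$ the image $(\Ma - I)\bpm \Bv \\ c \epm = \bpm (M - I)\Bv + c\,\Bp \\ 0 \epm$ has zero last coordinate. Hence every eigenvector of $\Ma$ for the eigenvalue $1$ is, after scaling, either of the form $\bpm \Bw \\ 1 \epm$ with $(I - M)\Bw = \Bp$ (which is exactly the hypothesis on the given eigenvector, rewritten as $M\Bw + \Bp = \Bw$), or of the form $\bpm \Bz \\ 0 \epm$ with $\Bz$ an eigenvector of $M$ for eigenvalue $1$. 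Throughout the argument I would substitute $\Bp = -(M - I)\Bw$.

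For the first assertion I would prove the equivalent statement $\ker(\Ma - I)^2 = \ker(\Ma - I)$, i.e.\ that the eigenvalue $1$ of $\Ma$ is semisimple (has only $1\times 1$ Jordan blocks). Take $\bpm \Bu \\ c \epm \in \ker(\Ma - I)^2$ and set $\Bz := (M - I)\Bu + c\,\Bp$, so that $(\Ma - I)\bpm \Bu \\ c \epm = \bpm \Bz \\ 0 \epm$; membership in $\ker(\Ma - I)^2$ then says precisely that $(M - I)\Bz = 0$, so $\Bz \in \ker(M - I)$. On the other hand, substituting $\Bp = -(M - I)\Bw$ gives $\Bz = (M - I)(\Bu - c\,\Bw)$, so $\Bz$ also lies in the range of $M - I$. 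If $M$ has no eigenvalue $1$ then $\ker(M - I) = \{0\}$ and $\Bz = 0$. If instead the eigenvalue $1$ of $M$ has a complete set of eigenvectors, then $\ker(M - I) \cap \textnormal{range}(M - I) = \{0\}$ — the standard fact that a semisimple eigenvalue $\mu$ satisfies $\ker(M - \mu I) = \ker(M - \mu I)^2$ — and again $\Bz = 0$. In either case $(\Ma - I)\bpm \Bu \\ c \epm = 0$, which is exactly what is needed.

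For the second assertion I would argue by contraposition: suppose $M$ has an eigenvalue $1$ with eigenvector $\Bz \neq 0$. Then $\bpm \Bw + \Bz \\ 1 \epm$ is an eigenvector of $\Ma$ for eigenvalue $1$ with last coordinate $1$, since $(M - I)(\Bw + \Bz) + \Bp = \bigl((M - I)\Bw + \Bp\bigr) + (M - I)\Bz = 0$. Since $\Bz \neq 0$ we have $\bpm \Bw + \Bz \\ 1 \epm \neq \bpm \Bw \\ 1 \epm$ (equivalently, $\bpm \Bz \\ 0 \epm$ and $\bpm \Bw \\ 1 \epm$ are two linearly independent eigenvectors), so the eigenvector $\bpm \Bw \\ 1 \epm$ is not unique, contradicting the hypothesis; hence $M$ has no eigenvalue $1$.

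The argument is almost entirely bookkeeping with the block form, so I do not anticipate a real obstacle; the one genuinely non-trivial ingredient is the linear-algebra fact used in the second case of the first assertion, namely that for a semisimple eigenvalue $\mu$ one has $\ker(M - \mu I) \cap \textnormal{range}(M - \mu I) = \{0\}$. I would either cite it or include its one-line proof: if $\Bv = (M - \mu I)\Bu$ with $(M - \mu I)\Bv = 0$, then $\Bu \in \ker(M - \mu I)^2 = \ker(M - \mu I)$, so $\Bv = 0$. The only point requiring care is keeping the substitution $\Bp = -(M-I)\Bw$ and the ``last coordinate is zero'' observation properly aligned.
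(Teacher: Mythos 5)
Your proof is correct, and it is worth noting that the paper itself does not prove this lemma at all: it simply refers the reader to \cite{B13}. The argument in that reference (and the one implicitly relied on in the proof of Lemma \ref{lemma:Jordan-decomp-R}, where the eigenstructure of $\Rak$ is read off from that of $R^{[k]}$ plus one extra eigenvalue $1$) is the decoupling similarity: since $(M-I)\Bw+\Bp=0$, the matrix $T=\bpm I & \Bw \\ 0 & 1 \epm$ satisfies $T^{-1}\Ma T = \bpm M & 0 \\ 0 & 1 \epm$, so the Jordan structure of $\Ma$ at eigenvalue $1$ is that of $M$ together with one additional $1\times 1$ block, and both assertions follow by inspection. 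You instead work directly with $\ker(\Ma-I)^2=\ker(\Ma-I)$, using the substitution $\Bp=-(M-I)\Bw$ to place $\Bz=(M-I)\Bu+c\Bp$ in $\ker(M-I)\cap\textnormal{range}(M-I)$, which is $\{0\}$ when the eigenvalue $1$ of $M$ is absent or semisimple; and for the uniqueness claim you exhibit the explicit second eigenvector $\bpm \Bw+\Bz \\ 1 \epm$. Both routes are sound; yours is self-contained and elementary, while the similarity transformation is shorter and gives the complete spectral picture (all eigenvalues and Jordan blocks of $\Ma$, not just those at $1$) in one step, which is the form actually exploited later in the paper.
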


We refer readers \cite{B13} to the proof of Lemma \ref{lemma:unique}. Now we summarize spectral properties of our specific operators $\Ra$ and $\Na$ in terms of their possible Jordan canonical forms as given in the following lemmas. 
Essentially these lemmas say that all their eigenvalues must have absolute value strictly less than 1, except for the eigenvalue equal to 1. And the eigenvalue 1's geometric multiplicity either equal to or one less than its algebraic multiplicity.

\begin{lemma}\label{lemma:Jordan-decomp-R}
Assuming $\widehat{D}^{[k+1]} = \widehat{D}^{[k]}$, then $\Rak$ in \eqref{ISTA-Raug} is fixed and has a spectral decomposition $\Rak = \BP_{R}\BJ_{R}^{[k]}\BP_{R}^{-1}$, where $\BJ_{R}^{[k]}$ is a block diagonal matrix
\be\label{eq:J-ISTA}
\BJ_{R}^{[k]} = \Diag\left\{ \bpm 1&1\\0&1\epm , I_R^{[k]} , \widehat{\BJ}_R^{[k]}  \right\}
\ee
where any of these blocks might be missing.
Here
$I_R^{[k]} $ is an identity matrix and $\widehat{\BJ}^{[k]}_R$ is a matrix with spectral radius strictly less than $1$.
\end{lemma}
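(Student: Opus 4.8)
The plan is to read the Jordan structure of $\Rak$ directly off its block form $\Rak=\bpm R^{[k]} & \widehat{\Bh}^{[k]}\\ 0 & 1\epm$, combining the spectral facts for $R^{[k]}$ (Lemma~\ref{lemma:ISTA-norm}) with the abstract statement for augmented matrices (Lemma~\ref{lemma:unique}). That $\Rak$ is \emph{fixed} once $\widehat{D}^{[k+1]}=\widehat{D}^{[k]}$ is immediate from \eqref{h-ista}: both $R^{[k]}$ and $\widehat{\Bh}^{[k]}$ are functions of $\widehat{D}^{[k]}$ alone, so ${\bf R}^{[k+1]}_{\bf aug}=\Rak$ (Remark~(c) of Section~\ref{sec:FISTArecur}). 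It therefore remains to compute the Jordan canonical form of this single matrix; $\BJ_R^{[k]}$ in \eqref{eq:J-ISTA} is exactly that form.

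Since $\Rak$ is block upper triangular with lower-right block $1$, its spectrum is $\sigma(R^{[k]})\cup\{1\}$. By Lemma~\ref{lemma:ISTA-norm}(b) every eigenvalue of $R^{[k]}$ lies in $[0,1]$, so those lying in $[0,1)$ have modulus $<1$ and, grouped together, constitute the block $\widehat{\BJ}_R^{[k]}$, which thus automatically has spectral radius $<1$. All remaining work concerns the eigenvalue $1$ of $\Rak$: I must show it has at most one non-trivial Jordan block and that, if present, that block is $2\times2$, the rest being $1\times1$ (collected into $I_R^{[k]}$).

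I would then split into two cases according to whether $\Rak$ has a $1$-eigenvector with non-zero last component, equivalently whether $\widehat{\Bh}^{[k]}\in\mathrm{range}(I-R^{[k]})$. \emph{Case 1: it does.} Scaling such an eigenvector to $\bpm \Bw\\1\epm$, I apply Lemma~\ref{lemma:unique} with $M=R^{[k]}$: its hypothesis---that $M$ has no eigenvalue $1$ or that the eigenvalue $1$ of $M$ has a complete set of eigenvectors---is supplied by Lemma~\ref{lemma:ISTA-norm}(c). Hence the eigenvalue $1$ of $\Rak$ has no non-trivial Jordan block: the $\bpm 1&1\\0&1\epm$ block is absent and $1$ contributes only $I_R^{[k]}$ (of size $1$ if $R^{[k]}$ has no eigenvalue $1$, of size $r+1$ if $R^{[k]}$ has eigenvalue $1$ of multiplicity $r$). \emph{Case 2: it does not.} Then $I-R^{[k]}$ is singular, so $R^{[k]}$ has eigenvalue $1$, which is semisimple by Lemma~\ref{lemma:ISTA-norm}(c); set $r=\dim\ker(I-R^{[k]})$. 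Now the augmentation raises the algebraic multiplicity of $1$ to $r+1$ without supplying a new eigenvector, forcing exactly one non-trivial block. To pin its size to $2$, I would note $(\Rak-I)^j=\bpm (R^{[k]}-I)^j & (R^{[k]}-I)^{j-1}\widehat{\Bh}^{[k]}\\ 0 & 0\epm$ and use semisimplicity, which gives $\mathrm{range}(R^{[k]}-I)^j=\mathrm{range}(R^{[k]}-I)$ for all $j\ge1$, so $(R^{[k]}-I)^{j-1}\widehat{\Bh}^{[k]}\in\mathrm{range}(R^{[k]}-I)$ for $j\ge2$; hence $\mathrm{rank}(\Rak-I)=\mathrm{rank}(R^{[k]}-I)+1$ (as $\widehat{\Bh}^{[k]}\notin\mathrm{range}(R^{[k]}-I)$) while $\mathrm{rank}(\Rak-I)^j=\mathrm{rank}(R^{[k]}-I)$ for all $j\ge2$. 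The standard count (number of blocks of size $\ge j$ equals $\mathrm{rank}(\Rak-I)^{j-1}-\mathrm{rank}(\Rak-I)^j$) yields $r$ blocks total, exactly one of size $\ge2$ and none of size $\ge3$---i.e.\ one $2\times2$ block plus $r-1$ size-$1$ blocks (the latter absent when $r=1$), which is \eqref{eq:J-ISTA}.

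The main obstacle is Case 2: ruling out a Jordan block of size $\ge3$ for the eigenvalue $1$. This is precisely where semisimplicity of the eigenvalue $1$ of $R^{[k]}$ (Lemma~\ref{lemma:ISTA-norm}(c), itself Theorem~\ref{thm:spectral property}(b) applied to $\|R^{[k]}\|=1$) is indispensable: without it the chain of generalized eigenvectors could be longer and the identity $\mathrm{range}(R^{[k]}-I)^2=\mathrm{range}(R^{[k]}-I)$ would fail. An alternative, possibly shorter rendering of this step is to chase a hypothetical chain $(\Rak-I)\Bv_1=0$, $(\Rak-I)\Bv_2=\Bv_1$, $(\Rak-I)\Bv_3=\Bv_2$, and show that matching last components forces $\Bv_1$ to lie simultaneously in $\ker(R^{[k]}-I)$ and in $\mathrm{range}(R^{[k]}-I)$, hence $\Bv_1=0$ by semisimplicity---a contradiction. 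I would present whichever version is cleaner.
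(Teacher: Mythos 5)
Your proposal is correct and follows essentially the same route as the paper: read the spectrum off the block-triangular form of $\Rak$, use Lemma~\ref{lemma:ISTA-norm} for the upper-left block and Lemma~\ref{lemma:unique} for the augmented eigenvalue $1$, and note that augmentation raises the algebraic multiplicity of $1$ by one so the defect is at most one, yielding at most a single $2\times2$ block. Your explicit rank computation $(\Rak-I)^j=\bigl(\begin{smallmatrix}(R^{[k]}-I)^j & (R^{[k]}-I)^{j-1}\widehat{\Bh}^{[k]}\\ 0&0\end{smallmatrix}\bigr)$ just makes rigorous the step the paper states by multiplicity counting (ruling out blocks of size $\ge 3$ via semisimplicity of the eigenvalue $1$ of $R^{[k]}$), so it is a more detailed rendering of the same argument rather than a different one.
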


\begin{proof}
For $\Rak$, the upper left block of \eqref{ISTA-Raug} (i.e. $R^{[k]}$) satisfies Lemma \ref{lemma:ISTA-norm} and hence contributes blocks of the form $I_R^{[k]} $, $\widehat{\BJ}_R^{[k]} $. No eigenvalue with absolute value $1$ can have a nondiagonal Jordan block, so the blocks corresponding to those eigenvalues must be diagonal. Embedding that upper left block $R^{[k]} $ into the entire matrix yields a matrix $\Rak$, with the exact same set of eigenvalues with the same algebraic and geometric multiplicities, except for eigenvalue $1$. 

If the upper left block of $\Rak$  has no eigenvalue equal to 1, then $\Rak$ has a simple eigenvalue 1.  In general for eigenvalue 1, the algebraic multiplicity goes up by one and the geometric multiplicity can either stay the same or increase by 1. In other words, $\Rak$ either satisfies the conditions of Lemma \ref{lemma:unique}, or the algebraic and geometric multiplicities of eigenvalue 1 for $\Rak$ differ by 1, meaning we have a single $2\times2$ Jordan block $\bpm 1 & 1 \\ 0 &1 \epm$.
\qquad \end{proof}

\begin{lemma}\label{lemma:Jordan-decomp-N}
	Assuming $\widetilde{D}^{[k+1]} = \widetilde{D}^{[k]}$, since $\Nak$  in \eqref{FISTA-Naug} is different at each step, for each step $k$, there exists a $\BP_{N}^{[k]}$ such that $\Nak$ has a spectral decomposition $\Nak = \BP_{N}^{[k]} \BJ_{N}^{[k]}(\BP_{N}^{[k]})^{-1}$, where $\BJ_{N}^{[k]}$ is the block diagonal matrix:
	\be\label{eq:J-FISTA}
	\BJ_{N}^{[k]} = \Diag\left\{ \bpm 1&1\\0&1\epm , I_N^{[k]}  ,  \widetilde{\BJ}_{N}^{[k]}  \right\}
	\ee
	where any of these blocks might be missing.
	Here $I_N^{[k]} $ is an identity matrix, $\widetilde{\BJ}_{N}^{[k]} $ is a matrix with spectral radius strictly less than $1$.
\end{lemma}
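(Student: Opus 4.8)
The plan is to follow the template of the proof of Lemma~\ref{lemma:Jordan-decomp-R}, substituting Lemma~\ref{lemma:FISTA-norm} for Lemma~\ref{lemma:ISTA-norm} as the source of spectral information, while keeping in mind that, unlike $\Rak$, the operator $\Nak$ genuinely varies with $k$ even when the flag matrix is held fixed (remark~(c)); this is precisely why the Jordan basis $\BP_N^{[k]}$ and the form $\BJ_N^{[k]}$ must be allowed to depend on $k$.

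First I fix a step $k$ inside a regime on which the flag matrix is constant, so that $\widetilde{D}^{[k+1]}=\widetilde{D}^{[k]}=\widetilde{D}^{[k-1]}$ and hence $\widetilde{R}^{[k]}=\widetilde{R}^{[k-1]}$; by remark~(a) we also have $0<\tau^{[k]}<1$ for all sufficiently large $k$, which are the steps relevant to the local analysis. Under these hypotheses Lemma~\ref{lemma:FISTA-norm} applies to the upper-left block $N^{[k]}$ of $\Nak$ and supplies everything needed about $N^{[k]}$ by itself: part~(b) puts the whole spectrum of $N^{[k]}$ in the disk $\Dcal(\rfrc{1}{2},\rfrc{1}{2})$, so every eigenvalue has modulus at most $1$ and the unique eigenvalue of modulus exactly $1$ is the value $1$; and part~(d) shows that if $1$ is an eigenvalue of $N^{[k]}$ then it has a complete set of eigenvectors. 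Consequently the Jordan form of $N^{[k]}$ splits into a (possibly empty) identity block for the eigenvalue $1$, plus Jordan blocks for eigenvalues of modulus strictly below $1$; the latter assemble into a matrix of spectral radius $<1$, destined to be $\widetilde{\BJ}_N^{[k]}$.

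Next I embed $N^{[k]}$ into $\Nak=\bpm N^{[k]} & \widetilde{\Bh}^{[k]} \\ 0 & 1 \epm$, a block upper triangular matrix with $1\times1$ lower-right block equal to $1$, which is exactly the situation of Lemma~\ref{lemma:unique}. Going from $N^{[k]}$ to $\Nak$ preserves all eigenvalues and all Jordan blocks for eigenvalues $\ne 1$, while the algebraic multiplicity of the eigenvalue $1$ increases by exactly one and its geometric multiplicity increases by zero or one. If $N^{[k]}$ has no eigenvalue $1$, or if $\Nak$ possesses an eigenvector for eigenvalue $1$ with a non-vanishing last entry, then Lemma~\ref{lemma:unique} forces the eigenvalue $1$ of $\Nak$ to carry no non-trivial Jordan block, and the $\bpm 1&1\\0&1\epm$ summand of \eqref{eq:J-FISTA} is absent (the extra multiplicity is absorbed into $I_N^{[k]}$). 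In the remaining case the added unit of algebraic multiplicity is not matched by geometric multiplicity; since the eigenvalue $1$ of $N^{[k]}$ is already semisimple, this produces exactly one $2\times2$ block $\bpm 1&1\\0&1\epm$ with every other eigenvalue-$1$ block still $1\times1$. Either way one obtains the asserted form \eqref{eq:J-FISTA} with any of the three blocks possibly missing, and taking $\BP_N^{[k]}$ to be the associated Jordan basis finishes the proof.

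The main obstacle here is not conceptual but careful bookkeeping of two things: verifying that the hypotheses of Lemma~\ref{lemma:FISTA-norm} ($\widetilde{R}^{[k]}=\widetilde{R}^{[k-1]}$, which needs the flag matrix unchanged one step earlier as well, and $0<\tau^{[k]}<1$) actually hold at the step in question, and correctly separating the eigenvalue $1$ introduced by the augmentation from a possibly pre-existing eigenvalue $1$ of $N^{[k]}$, so as to exclude both a Jordan block larger than $2\times2$ and a second non-trivial block. Both are dispatched by combining parts~(b) and~(d) of Lemma~\ref{lemma:FISTA-norm} with Lemma~\ref{lemma:unique}, just as in the ISTA case.
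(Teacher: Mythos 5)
Your proposal is correct and follows essentially the same route as the paper, whose proof of this lemma simply says it is ``similar to $\Rak$'' and notes that the upper-left block $N^{[k]}$ satisfies Lemma \ref{lemma:FISTA-norm}; you have just written out in full the same embedding argument via Lemma \ref{lemma:unique} that the paper delegates to the proof of Lemma \ref{lemma:Jordan-decomp-R}. Your explicit bookkeeping of the hypotheses ($\widetilde{D}^{[k]}=\widetilde{D}^{[k-1]}$ and $0<\tau^{[k]}<1$) is a slightly more careful treatment than the paper's, but not a different approach.
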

\begin{proof}
	The proof is similar to $\Rak$.  We only note here that the upper left block of \eqref{FISTA-Naug} (i.e. $N$) satisfies Lemma \ref{lemma:FISTA-norm} and hence contributes blocks of the form $I_N^{[k]} $, $\widetilde{\BJ}_{N}^{[k]}$.
\qquad \end{proof}

\subsection{Four regimes} \label{sec:four-regimes}
Lemma \ref{lemma:Jordan-decomp-R} \& \ref{lemma:Jordan-decomp-N} give rise to the four possible ``regimes" associated with the ISTA and FISTA iterations, depending on the flag matrix and the eigenvalues of $\Rak$, $\Nak$. We treat separately the case where the flag matrix remains the same at each iteration, in which there are three possible regimes, and treat all the transitional cases together in their own fourth regime. For simplicity of the notation, we let $D$ denote the flag matrix instead of $\widehat{D}$ and $\widetilde{D}$ unless specified.

When the flag matrix does not change, i.e. $D^{[k+1]} = D^{[k]}$, the ISTA operator $\Rak$ remains invariant over those passes while the FISTA operator $\Nak$ is slightly different at each iteration due to the changing parameter $\tau^{[k]} $. In both cases, the structure of the spectrum for that specific operator controls the convergence behavior of the process during these passes. We summarize as follows three specific possible regimes distinguished by the eigenstructure of the operators $\Rak$, $\Nak$. One of the these three regimes must occur when the
flag matrix is unchanged from one step to the next: $D^{[k+1]} = D^{[k]}$. 

{
[A]. The spectral radius of $R^{[k]}$ (or $N^{[k]}$) is strictly less than 1. The block $\bpm 1 & 1 \\ 0 &1 \epm$ is absent from \eqref{eq:J-ISTA} (or \eqref{eq:J-FISTA}), and the block $I_R^{[k]}$ (or $I_N^{[k]}$) is $1\times1$. If close enough to the optimal solution (if it exists), the result is linear convergence to that solution.

For $\Rak$, as long as the flags remain the same, the recurrence \eqref{ISTA-Raug} hence will converge linearly to a unique fixed point at a rate determined by the next largest eigenvalue in absolute value (largest eigenvalue of the block $\widehat{\BJ}_R^{[k]}$), according to the theory for the power method. If the flags $\widehat{D}^{[k]}$ are consistent with the eigenvector satisfying \eqref{ISTA-def-D}, then the eigenvector must satisfy the KKT condition because of Lemma \ref{ISTA-KT-2}. 

For $\Nak$, though it changes slightly at each step, yet we will show later that the left and right eigenvectors
for eigenvalue 1 do not depend on $\tau$ (Lemma \ref{lemma:FISTA-simple}), and the remaining eigenvalues remain
bounded away from 1.  The result is that we observe linear convergence
to a unique fixed point with the slightly changing convergence rate.  If the flags $\widetilde{D}^{[k]}$ are consistent with the eigenvector satisfying  \eqref{FISTA-def-D}, then that eigenvector must satisfy the KKT condition because of Lemma \ref{FISTA-KT-2}. 

[B]. $R^{[k]}$ (or $N^{[k]}$) has an eigenvalue equal to 1 which results in a $2\times2$ Jordan block $\bpm 1 & 1 \\ 0 &1 \epm$ for $\Rak$ (or $\Nak$). Therefore, the iteration process tends to a constant step. 

For $\Rak$, the theory of power method implies that the vector iterates will converge to the invariant subspace corresponding to the largest eigenvalue $1$. The presence of $\bpm 1 & 1 \\ 0 &1 \epm$ means that there is a Jordan chain: two non-zero vectors $\Bq, \Br$ such that $(\Ra - I)\Bq = \Br$, $(\Ra - I)\Br = 0$. Any vector which includes a component of the form $\alpha \Bq + \beta \Br$ will be transformed into $\Ra(\alpha \Bq + \beta \Br) = \alpha \Bq + (\alpha + \beta) \Br$, i.e. each pass would add a constant vector $\alpha \Br$, plus fading lower order terms from the other lesser eigenvalues. As long as the flags do not change, this will result in constant steps: the difference between consecutive iterates, $\bpm \widehat{\Bw}^{[k+1]} \\ 1 \epm- \bpm \widehat{\Bw}^{[k]} \\ 1 \epm$, would converge to a constant vector, asymptotically as the effects of the smaller eigenvalues fade. That constant vector is an eigenvector for eigenvalue $1$. The ISTA iteration will not converge until a flag change in $\widehat{\Bw}^{[k]}$ forces a change in the flags $\widehat{D}^{[k]}$. If we satisfy the conditions for global convergence of ISTA, then such a flag change is guaranteed to occur. 

The same situation applies to $\Na$. The difference between two iterates $\bpm \widetilde{\Bw}^{[k+1]} \\ 1 \epm- \bpm \widetilde{\Bw}^{[k]} \\ 1 \epm$, would asyptotically converge to a constant vector.  The FISTA iteration will not converge until a flag change in $\widetilde{\Bw}^{[k]}$ forces a change in the flags $\widetilde{D}^{[k]}$. Such a flag change is guaranteed to occur due to the global convergence of FISTA. In Section \ref{sec:regime-b}, we will show that FISTA can jump out of such regime very fast, which is the main reason why it is faster than ISTA. See Section \ref{sec:examples} for more discussions on its numerical behavior.

[C]. $R^{[k]}$ (or $N^{[k]}$) has an eigenvalue equal to $1$, but the block $\bpm 1 & 1 \\ 0 &1 \epm$ is absent. For $\Ra$ (or $\Na$), the convergence rate of this regime will still depend on $\rho(\widehat{\BJ}_R^{[k]})$ and $\rho(\widetilde{\BJ}_N^{[k]})$. If we assume the solution is unique, the eigenvalue $1$ of $\Ra$ (or $\Na$) must be simple by Lemma \ref{lemma:unique}. So the iteration will eventually jump out this type of regime. 

When the flag matrix does change, it means the set of active constraints at the current passes in the process has changed, and the current pass is a transition to a different operator with a different eigenstruture. 

[D]. The operator $R^{[k+1]}$ (or $N^{[k+1]}$) will be different from $R^{[k]}$ (or $N^{[k]}$) due to different flag matrix. 

}

\section{Unique Solution: Linear Convergence} \label{sec:linear}

{\subsection{Partial Spectral Decomposition} \label{sec:partial}
	As we remarked at the end of Section \ref{sec:FISTArecur}, $\Nak$ is different for different step $k$. In the Lemma \ref{lemma:Jordan-decomp-N}, we show that, for different $k$,  $\Nak$ is spectrally decomposed by different matrix $\BP_{N}^{[k]}$. This section shows that if unique solution is assumed, then for different $k$, $\Nak$ can be spectrally decomposed by the same matrix, denoted as $\BP_{N}$. 
	
	\begin{lemma} \label{lemma:fixed-point1}
		Assume that FISTA in Alg.\ 2 has a unique solution $\Bx^*$. If iteration $j >K$ for some $K$, the stepsize $t^{[j]}$ becomes frozen
		at a constant value $t^{[K]}$, i.e. $\tau^{[j]}= \frac{t^{[j-1]}-1}{t^{[j]}} = c \in [0,1]$,  then the iteration will converge to the same solution $\Bx^*$.
	\end{lemma}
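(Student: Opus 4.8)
The plan is to show that freezing the parameter $\tau^{[j]} = c \in [0,1]$ for $j > K$ turns FISTA into a \emph{stationary} matrix recurrence whose iteration matrix $\Na$ is constant (for a fixed flag matrix), and then to identify its unique attracting fixed point as the KKT point $\Bx^*$. First I would observe that once $\tau^{[j]} = c$ is constant, the matrix $\Nak$ in \eqref{FISTA-Naug} no longer depends on $k$ except through the flag matrix $\widetilde{D}^{[k]}$ — this is the source of the non-stationarity noted in remark (c) after Lemma \ref{FISTA-KT-2}, and it disappears when $\tau$ is frozen. By the global convergence of FISTA (which still holds since the frozen $\tau$ gives a valid, if non-accelerated, momentum scheme with $c \le 1$), the iterates remain bounded and the flag matrix $\widetilde{D}^{[k]}$ can change only finitely many times, since each flag configuration corresponds to a sign pattern of a convergent sequence; hence eventually $\widetilde{D}^{[k]} = \widetilde{D}$ is constant for all large $k$, and we are in a genuine stationary linear recurrence $\bpm \widetilde{\Bw}^{[k+1]} \\ \widetilde{\Bw}^{[k]} \\ 1 \epm = \Na \bpm \widetilde{\Bw}^{[k]} \\ \widetilde{\Bw}^{[k-1]} \\ 1 \epm$ with $\Na$ fixed.

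Next I would pin down the spectrum of this frozen $\Na$. Since $0 < c \le 1$, part (b) of Lemma \ref{lemma:FISTA-norm} tells us all eigenvalues of $N$ lie in $\Dcal(\rfrc12,\rfrc12)$, so the only eigenvalue with modulus $1$ is possibly $1$ itself. Because the solution $\Bx^*$ is assumed unique, Lemma \ref{lemma:unique} (applied to $\Ma = \Na$, whose upper-left block is $N$) forces the eigenvalue $1$ of $\Na$ to be \emph{simple} — indeed if $N$ had $1$ as an eigenvalue it would contribute an eigenvector, and then by Lemma \ref{FISTA-KT-2} the associated $\bpm \widetilde{\Bw} \\ \widetilde{\Bw} \\ 1 \epm$ would give a KKT point, which by uniqueness is $\Bx^*$; and if the eigenvector of $\Na$ for eigenvalue $1$ were non-unique we would get a second KKT point, a contradiction. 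Either way $1$ is a simple eigenvalue, all other eigenvalues of $\Na$ have modulus strictly less than $1$, and there is a norm in which $\|\Na\|$ restricted to the complementary invariant subspace is $<1$ (Theorem \ref{thm:spectral property}(c) if needed, though simplicity already gives diagonalizability at $1$). Standard power-method/fixed-point theory then gives that $\bpm \widetilde{\Bw}^{[k]} \\ \widetilde{\Bw}^{[k-1]} \\ 1 \epm$ converges to the unique eigenvector of $\Na$ for eigenvalue $1$ normalized to have last coordinate $1$; write it as $\bpm \widetilde{\Bw}^* \\ \widetilde{\Bw}^* \\ 1 \epm$.

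Finally I would translate back: by Lemma \ref{FISTA-KT-2}, $\widetilde{\Bx}^* := \text{Shr}_{\rfrc{\lambda}{L}}(\widetilde{\Bw}^*)$ satisfies the first-order KKT conditions \eqref{KT-1}--\eqref{KT-2}, hence is a solution of \eqref{LASSO}, hence equals $\Bx^*$ by uniqueness. Since $\widetilde{\Bx}^{[k+1]} = \text{Shr}_{\rfrc{\lambda}{L}}(\widetilde{\Bw}^{[k]})$ and $\text{Shr}_{\rfrc{\lambda}{L}}$ is continuous (in fact $1$-Lipschitz), $\widetilde{\Bx}^{[k]} \to \widetilde{\Bx}^* = \Bx^*$, which is the claim. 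The main obstacle I anticipate is the bookkeeping around the flag matrix: rigorously justifying that freezing $\tau$ does not destroy global convergence and that the flags stabilize after finitely many changes — one needs that a frozen-momentum proximal-gradient scheme with $c\le1$ still converges, and that the boundary cases $|\widetilde{w}_i| = \rfrc{\lambda}{L}$ (where the flag is ambiguous) do not cause perpetual oscillation near the limit; once the process is genuinely stationary the spectral argument is routine given the lemmas already proved.
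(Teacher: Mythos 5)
Your route has genuine gaps, and it is far heavier machinery than the lemma needs. First, you invoke ``global convergence of FISTA'' for the frozen-$\tau$ scheme, but Theorem \ref{thm:conv} covers only the standard $t^{[k]}$-sequence; an inertial proximal-gradient iteration with a \emph{constant} momentum coefficient $c$ that may be arbitrarily close to $1$ is not covered by that theorem, and its convergence is not automatic -- asserting it in a parenthesis is to assume essentially what the lemma's convergence claim is about. Second, your claim that the flag matrix changes only finitely often does not follow from boundedness or even from convergence of the iterates: if some coordinate of the limit $\widetilde{\Bw}^*$ sits exactly at $\pm\rfrc{\lambda}{L}$, the flags can oscillate forever. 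Ruling this out is exactly the strict-complementarity hypothesis, which the paper introduces only later (Theorem \ref{thm:linear-conv-fista}) and which is \emph{not} assumed in this lemma; you acknowledge this obstacle but the argument cannot be completed without it. Third, your simplicity argument for the eigenvalue $1$ of $\Na$ conflates eigenvectors with KKT points: Lemma \ref{FISTA-KT-2} turns an eigenvector into a KKT point only when its entries are consistent with the flag matrix used to build $\Na$, so a second, flag-inconsistent eigenvector would not by itself produce a second solution of \eqref{LASSO}, and the contradiction you want does not come for free. There is also a circularity concern: the stationary spectral analysis you want to run (fixed flags, power-method convergence, common dominant eigenpair) is precisely the machinery the paper constructs \emph{after} this lemma, partly on top of it (Lemma \ref{lemma:fixed-point2}, Lemma \ref{lemma:FISTA-simple}).

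The paper's own proof is much more elementary and sidesteps all of this. Its point is simply that the fixed point of Alg.\ 2 does not depend on the frozen value $c$: at a fixed point the successive differences vanish, so the momentum term $\frac{t^{[k-1]}-1}{t^{[k]}}(\widetilde{\Bx}^{[k-1]}-\widetilde{\Bx}^{[k-2]})$ drops out no matter what $c\in[0,1]$ is, and the fixed-point equation reduces to $\Bx=\text{Shr}_{\rfrc{\lambda}{L}}\bigl((I-\rfrc{1}{L}A^{\T}A)\Bx+\rfrc{1}{L}A^{\T}\Bb\bigr)$, i.e.\ the KKT condition, so by uniqueness any limit must be $\Bx^*$ (the paper, too, treats the existence of a limit lightly). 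That identification of the limit, independent of $c$, is all that is used downstream; your spectral detour both overshoots the target and rests on the unproven steps above.
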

	\begin{proof}
		Since $\Bx^*$ is the unique solution, it must be the fixed point of the algorithm. If the algorithm converges, one must have $\Bx^{[k]} - \Bx^{[k-1]} \rightarrow 0$. In Alg.\ 2, it is easy to see that no matter what $c = \frac{t^{[j-1]}-1}{t^{[j]}}$ is, the converging point will always be the same point, the optimal solution $\Bx^*$.
	\qquad \end{proof}
	Since Alg.\ 4 is equivalent to Alg.\ 2, we have the similar statement for $\Nak$, as shown in the next lemma. 
	
	\begin{lemma}  \label{lemma:fixed-point2}
		Assume that the FISTA in Alg.\ 4 has a unique solution $\fwas = \bpm \widetilde{\Bw}^* \\ 1 \epm$. If iteration $j >K$ for some $K$, the stepsize becomes frozen at a constant number, i.e. $\tau^{[j]} = \frac{t^{[j-1]}-1}{t^{[j]}} = c \in [0,1]$,  ${\bf N}^{[j]}_{\bf aug}$ will then become a constant matrix and the iteration will converge to the same solution $\fwas$. In other words, $(1, \fwas)$ is a simple dominant eigenpair of ${\bf N}^{[j]}_{\bf aug}$.
	\end{lemma}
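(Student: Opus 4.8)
The plan is to reduce the statement to the eigenstructure of the \emph{single constant} matrix $\Na:={\bf N}^{[j]}_{\bf aug}$ obtained once both $\tau^{[j]}=c$ and the flag matrix $\widetilde D^{[j]}$ have frozen (note $c=1-1/t^{[K]}\in[0,1)$, so in particular $c<1$). First I would invoke Lemma \ref{lemma:fixed-point1} together with the equivalence of Alg.\ 4 and Alg.\ 2 to get that the iterates converge to $\widetilde{\Bw}^*$; hence the frozen flag matrix is the one read off from that limit, with active set $\Om=\{\,i:|\widetilde w^*_i|>\rfrc{\lambda}{L}\,\}$, which coincides with the support $S$ of $\widetilde{\Bx}^*=\text{Shr}_{\rfrc{\lambda}{L}}(\widetilde{\Bw}^*)$. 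For $j$ large, $\Na$ is then the constant matrix of \eqref{FISTA-Naug} with $\widetilde R=(I-\rfrc1L A^{\T}A)\widetilde D^2$, $P=(1+c)\widetilde R$, $Q=-c\widetilde R$; and by the converse part of Lemma \ref{FISTA-KT-2}, the vector $\fwas$ (whose two $\widetilde{\Bw}$-blocks coincide and equal $\widetilde{\Bw}^*=\widetilde{\Bx}^*+\rfrc{\lambda}{L}\Bnu$) is an eigenvector of $\Na$ for the eigenvalue $1$. It then remains only to show that this eigenvalue is \emph{simple} and \emph{strictly dominant}.

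Strict dominance is immediate from Lemma \ref{lemma:FISTA-norm}(b): all eigenvalues of the upper-left block $N$ lie in $\Dcal(\rfrc12,\rfrc12)$, so $\rho(N)\le1$ and $1$ is the only eigenvalue of $N$ of modulus $1$; since the spectrum of $\Na$ is that of $N$ together with one extra eigenvalue $1$ coming from the augmentation, $\rho(\Na)=1$ and $1$ is the unique eigenvalue of $\Na$ on the unit circle.

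For simplicity the crucial step is to show that $N$ --- equivalently, by Lemma \ref{lemma:FISTA-norm}(c), $\widetilde R$ --- has \emph{no} eigenvalue equal to $1$. I would argue by contradiction: if $\widetilde R v=v$ with $v\neq0$, put $\Bu:=\widetilde D^2 v$ (supported on $\Om$, with $\widetilde D^2\Bu=\Bu$ since $\widetilde D^2$ is a $0/1$ projection), so that $v=(I-\rfrc1L A^{\T}A)\Bu$; applying $\widetilde D^2$ and using $\widetilde D^2 v=\Bu$ gives $\widetilde D^2 A^{\T}A\Bu=0$, hence $A_\Om^{\T}A_\Om\Bu_\Om=0$, hence $A_\Om\Bu_\Om=0$. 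But uniqueness of the LASSO minimizer forces $A_S=A_\Om$ to have full column rank (a perturbation of $\widetilde{\Bx}^*$ along $\ker A_S$ respecting its sign pattern would otherwise produce a second minimizer; see \cite{T13,ZYC12}), so $\Bu_\Om=0$, whence $\Bu=0$ and then $v=(I-\rfrc1L A^{\T}A)\Bu=0$, a contradiction. With ``$N$ has no eigenvalue $1$'' established, Lemma \ref{lemma:unique} applied to $\Ma=\Na$, $M=N$ shows the eigenvalue $1$ of $\Na$ has no non-trivial Jordan block, and --- since $N$ contributes nothing to it --- has algebraic multiplicity $1$. Thus $(1,\fwas)$ is a simple dominant eigenpair of $\Na$, and the power-method reading of \eqref{FISTA-Naug}, with the last coordinate pinned to $1$ fixing the normalization, re-confirms that the iterates converge to $\fwas$.

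The place where the uniqueness hypothesis is genuinely used is exactly this simplicity step, and it is the main obstacle: one cannot shortcut it through the ``furthermore'' clause of Lemma \ref{lemma:unique}, because ``the eigenvector $\bpm\Bw\\1\epm$ is unique'' is itself equivalent to ``$N$ has no eigenvalue $1$'', so the linear independence of the columns of $A_S$ must be imported from the uniqueness theory separately. A minor additional subtlety is the treatment of degenerate indices with $|\widetilde w^*_i|=\rfrc{\lambda}{L}$, at which the frozen flags need not match those of the limit; such passes belong to the transitional regime \textup{[D]} and should be excluded (or handled) there.
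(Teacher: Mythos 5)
Your proposal is correct in substance, but it proves the lemma by a genuinely different and considerably more detailed route than the paper does. The paper's own proof is two sentences: once $\tau^{[j]}$ (and implicitly the flag matrix) is frozen, ${\bf N}^{[j]}_{\bf aug}$ is a fixed matrix, and Lemma \ref{lemma:fixed-point1} (equivalence of Alg.\ 4 with Alg.\ 2 and insensitivity of the limit to the frozen value $c$) gives convergence to $\fwas$; the ``simple dominant eigenpair'' conclusion is then read off from that convergence statement rather than argued spectrally. You instead work directly with the constant matrix $\Na$: the eigenvector at $1$ comes from the converse part of Lemma \ref{FISTA-KT-2}, strict dominance from Lemma \ref{lemma:FISTA-norm}(b) (the only point of $\Dcal(\rfrc{1}{2},\rfrc{1}{2})$ on the unit circle is $1$), and simplicity from showing $\widetilde R$ has no eigenvalue $1$ --- via $\widetilde D^2 A^{\T}A\Bu=0 \Rightarrow A_{\Om}^{\T}A_{\Om}\Bu_{\Om}=0$ and the full column rank of $A_{\Om}$, which you correctly derive from uniqueness by the sign-preserving perturbation argument --- followed by Lemma \ref{lemma:unique}. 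Your route buys rigor the paper leaves implicit (the jump from ``the iteration converges'' to ``$1$ is a simple, strictly dominant eigenvalue'' is exactly the step the paper does not spell out), and it makes visible where uniqueness is genuinely used; the cost is that it imports two hypotheses the lemma itself does not state --- a frozen flag matrix agreeing with the limit (your regime [D]/strict-complementarity caveat, which the paper only imposes later, in Theorem \ref{thm:linear-conv-fista}) and the rank consequence of uniqueness --- whereas the paper's argument, by leaning on Lemma \ref{lemma:fixed-point1}, stays short but is correspondingly informal. Both caveats are ones you flag yourself, so the proposal stands as a legitimate (indeed stronger) alternative proof rather than a reconstruction of the paper's.
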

	\begin{proof}
		Once $\tau^{[j]}$ is fixed for $j>K$, then ${\bf N}^{[j]}_{\bf aug}$ will be fixed. Due to Lemma \ref{lemma:fixed-point1},
		Alg.\ 4 will still converge to the same solution $\fwas$. 
	\qquad \end{proof}
	
	The above two lemmas directly indicates the next proposition:
	\begin{proposition}
		Though $\Nak$ in Alg.\ 4 are different for different $k$, they share the same simple dominant eigenpair $(\lambda=1, \fwas)$ as long as the LASSO problem has a unique solution.
	\end{proposition}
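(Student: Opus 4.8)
The plan is to reduce the statement to Lemmas~\ref{lemma:fixed-point1} and~\ref{lemma:fixed-point2} by exploiting the observation recorded in remark~(c) after Lemma~\ref{FISTA-KT-2}: once the flag matrix is held fixed (which is the standing hypothesis of this subsection, cf.\ Lemma~\ref{lemma:Jordan-decomp-N}), the only dependence of $\Nak$ on the iteration index is through the momentum parameter $\tau^{[k]}=\frac{t^{[k]}-1}{t^{[k+1]}}$. Hence each matrix $\Nak$ that actually occurs in Alg.\ 4 coincides with the constant matrix one would obtain by ``freezing'' $\tau$ at the value $c=\tau^{[k]}$. Since $\tau^{[k]}\in[0,1)$ for all $k$ by remark~(a), Lemma~\ref{lemma:fixed-point2} applies verbatim with this choice of $c$ and gives that $(1,\fwas)$ is a simple dominant eigenpair of $\Nak$; as $\fwas$ does not depend on $k$, the proposition follows.

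To make the argument self-contained I would also spell out the three ingredients directly. First, that $(1,\fwas)$ is an eigenpair of $\Nak$ for every $k$: this is Lemma~\ref{FISTA-KT-2}, the point being that a fixed point of \eqref{FISTA-Naug} satisfies \eqref{FISTA-kkt-proof1}, which does not involve $\tau$, so the single vector $\fwas=(\widetilde{\Bw}^*,\widetilde{\Bw}^*,1)^{\T}$ built from the (unique) KKT point serves for all $k$. Second, dominance: by Lemma~\ref{lemma:FISTA-norm}(b) every eigenvalue of $N^{[k]}$ lies in $\Dcal(\rfrc{1}{2},\rfrc{1}{2})$, and augmenting only appends one further eigenvalue $1$, so $\rho(\Nak)=1$ with $1$ the only eigenvalue of modulus $1$. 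Third, simplicity: uniqueness of the LASSO solution forces $A_{\Om}$ to have full column rank on the optimal support $\Om$; hence the symmetric positive semidefinite matrix $\widetilde{D}(I-\rfrc{1}{L}A^{\T}A)\widetilde{D}$, whose norm is at most $1$, cannot have $1$ as an eigenvalue (an eigenvector $v$ for $1$ would need $v_{\NOm}=0$ and $A_{\Om}v_{\Om}=0$, forcing $v=0$), so $\widetilde{R}$ has no eigenvalue $1$; by Lemma~\ref{lemma:FISTA-norm}(c) neither does $N^{[k]}$, and then Lemma~\ref{lemma:unique} applied to $\Ma=\Nak$ shows the eigenvalue $1$ of $\Nak$ is simple. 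Combining the three items gives the claim for each $k$.

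The only genuinely delicate point is the transfer of the ``simple dominant eigenpair'' conclusion from the frozen-$\tau$ scenario of Lemma~\ref{lemma:fixed-point2} to the true iteration with varying $\tau^{[k]}$; once one records that (with the flag matrix fixed) $\Nak$ is \emph{literally} a function of $\tau^{[k]}$ alone, this is immediate, but it deserves an explicit sentence, since a priori Lemma~\ref{lemma:fixed-point2} only concerns the hypothetical process in which $t^{[j]}$ has been frozen. A secondary point worth stating is the full-column-rank property of $A_{\Om}$: this is the standard characterization of LASSO uniqueness (see the references in Section~\ref{sec:uniq}), and it is precisely what prevents $\widetilde{R}$ from having $1$ as an eigenvalue, hence what makes the eigenvalue $1$ of $\Nak$ simple rather than merely semisimple.
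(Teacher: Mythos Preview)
Your first paragraph is the paper's proof: the paper states in one sentence that the proposition follows directly from Lemmas~\ref{lemma:fixed-point1} and~\ref{lemma:fixed-point2}, and your observation that $\Nak$ depends on $k$ only through $\tau^{[k]}$ (once the flag matrix is held fixed) is precisely what makes that citation legitimate.

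Your second and third paragraphs go beyond the paper by giving a direct argument for simplicity via the full-column-rank property of $A_{\Om}$, Lemma~\ref{lemma:FISTA-norm}(c), and Lemma~\ref{lemma:unique}. This is cleaner than the paper's path: the proofs of Lemmas~\ref{lemma:fixed-point1}--\ref{lemma:fixed-point2} only identify the limit of the frozen-$\tau$ iteration \emph{assuming} it converges, and do not themselves establish convergence, so the ``simple dominant'' conclusion there is somewhat circular. Your route avoids that gap at the modest cost of importing the (standard) fact that LASSO uniqueness forces linear independence of the columns of $A$ on the optimal support; once $\widetilde{R}$ has no eigenvalue $1$, simplicity of the eigenvalue $1$ for $\Nak$ is immediate from the block-triangular structure, and dominance from Lemma~\ref{lemma:FISTA-norm}(b). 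One small caveat: the full-column-rank step requires that the fixed flag matrix be the \emph{optimal} one (so that $\Om$ is the optimal support), which is indeed the implicit setting of Section~\ref{sec:partial} but is worth making explicit.
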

	
	Now we show that if every $\Nak$ has the same simple dominant eigenpair, then for different $k$, $\Nak$ can be decomposed by the same matrix, denoted as $\BP_{N}$.
	\begin{lemma}\label{lemma:FISTA-simple}
		Assuming $\widetilde{D}^{[k+1]} = \widetilde{D}^{[k]}$ and  FISTA in Alg.\ 4 has a unique solution,  then $\Nak$ has a spectral decomposition $\Nak = \BP_{N}\BJ_{N}^{[k]} \BP_{N}^{-1} $. Note that $\BP_{N}$ is the same for all $\Nak$ and
		\be \label{eq:J-FISTA-simple}
		\BJ_{N}^{[k]}
		= \bpm 1 & 0 \\ 0& \widetilde{\BJ}_N^{[k]} \epm
		\ee
		where $\widetilde{\BJ}_N^{[k]}$ is a matrix with spectral radius strictly less than $1$. 
	\end{lemma}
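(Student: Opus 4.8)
The plan is to combine the structural results already proved about $\Nak$ with the uniqueness assumption. From Lemma~\ref{lemma:Jordan-decomp-N} we know each $\Nak$ (under $\widetilde{D}^{[k+1]}=\widetilde{D}^{[k]}$) has Jordan form $\BJ_N^{[k]} = \Diag\{\bpm 1&1\\0&1\epm, I_N^{[k]}, \widetilde{\BJ}_N^{[k]}\}$, and from the Proposition just stated (via Lemmas~\ref{lemma:fixed-point1}--\ref{lemma:fixed-point2}) that uniqueness forces $(1,\fwas)$ to be a \emph{simple} dominant eigenpair of every $\Nak$. So first I would invoke Lemma~\ref{lemma:unique}: since eigenvalue $1$ is simple, the $2\times 2$ block $\bpm 1&1\\0&1\epm$ is absent and the identity block $I_N^{[k]}$ is $1\times 1$ (indeed it is exactly the single simple eigenvalue $1$), which already establishes that $\BJ_N^{[k]}$ has the claimed form \eqref{eq:J-FISTA-simple} with $\rho(\widetilde{\BJ}_N^{[k]})<1$. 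That part is essentially bookkeeping on top of earlier lemmas.

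The substantive claim is that a \emph{single} transformation matrix $\BP_N$ works for all $k$. My approach here is to exhibit the columns of $\BP_N$ explicitly and show they do not depend on $k$. The first column should be the right eigenvector $\fwas$ for eigenvalue $1$; by the Proposition this is the same vector for every $\Nak$, so it is $k$-independent. The remaining columns span the complementary invariant subspace. The key observation I would use is part~(a) of Lemma~\ref{lemma:FISTA-norm}: every eigenvector $\bpm \widetilde{\Bw}_1\\\widetilde{\Bw}_2\epm$ of $N^{[k]}$ with eigenvalue $\gamma$ satisfies $\widetilde{\Bw}_1 = \gamma\widetilde{\Bw}_2$, where $\widetilde{\Bw}_2$ is an eigenvector of $\widetilde{R} = \widetilde{R}^{[k]}$ (and under $\widetilde{D}^{[k+1]}=\widetilde{D}^{[k]}$, $\widetilde{R}^{[k]}$ is itself a fixed matrix depending only on the flag matrix, not on $\tau^{[k]}$). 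Thus the eigenvectors $\widetilde{\Bw}_2$ of $\widetilde{R}$ are fixed across the passes with the given flag matrix, and only the scalar multipliers $\gamma$ in front — determined through the quadratic $\gamma^2 - (1+\tau^{[k]})\gamma\beta + \tau^{[k]}\beta = 0$ — move with $k$. I would package this: let the columns of $\BP_N$ be $\fwas$ together with, for each eigenvalue $\beta$ of $\widetilde{R}$, the two vectors $\bpm \beta\,\widetilde{\Bw}_2 \\ \widetilde{\Bw}_2 \epm$ and $\bpm \widetilde{\Bw}_2 \\ \widetilde{0}\cdots\epm$ — more precisely a fixed pair of vectors spanning $\mathrm{span}\{\bpm \gamma_1\widetilde{\Bw}_2\\\widetilde{\Bw}_2\epm,\bpm \gamma_2\widetilde{\Bw}_2\\\widetilde{\Bw}_2\epm\}$ that does not depend on the particular $\gamma_i$'s, for instance $\bpm \widetilde{\Bw}_2\\ 0\epm$ and $\bpm 0\\\widetilde{\Bw}_2\epm$ (augmented by a zero in the last coordinate). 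These span the same two-dimensional invariant subspace of $\Nak$ for every $k$, since $N^{[k]}\bpm \widetilde{\Bw}_2\\0\epm = \bpm P^{[k]}\widetilde{\Bw}_2\\\widetilde{\Bw}_2\epm = \bpm (1+\tau^{[k]})\beta\widetilde{\Bw}_2\\\widetilde{\Bw}_2\epm$ and $N^{[k]}\bpm 0\\\widetilde{\Bw}_2\epm = \bpm Q^{[k-1]}\widetilde{\Bw}_2\\0\epm = \bpm -\tau^{[k]}\beta\widetilde{\Bw}_2\\0\epm$, both of which stay inside $\mathrm{span}\{\bpm \widetilde{\Bw}_2\\0\epm,\bpm 0\\\widetilde{\Bw}_2\epm\}$. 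Hence the column span of $\BP_N$ is invariant under every $\Nak$, with the first column being a genuine eigenvector, so $\Nak = \BP_N \BJ_N^{[k]}\BP_N^{-1}$ with only the block $\widetilde{\BJ}_N^{[k]}$ (which is the restriction of $\Nak$ to that fixed invariant subspace, expressed in the fixed basis) varying with $k$.

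The last thing to check is that this fixed invariant subspace is actually a complement to $\mathrm{span}(\fwas)$, i.e. that $\BP_N$ is nonsingular: this follows because, by part~(d) of Lemma~\ref{lemma:FISTA-norm} together with part~(b), all eigenvalues of $N^{[k]}$ other than $1$ have absolute value strictly less than $1$ and eigenvalue $1$ is simple (uniqueness), so the invariant subspace built from the $\beta<1$ eigenvectors of $\widetilde{R}$ — equivalently from the zero-padded vectors above, which also cover the case $\widetilde R$ has its own eigenvalue $1$ mapping to $\gamma = 1$ and $\gamma = \tau$, the latter strictly inside the disk — intersects $\mathrm{span}(\fwas)$ trivially and has the complementary dimension. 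I would also note that $\BP_N$ so constructed need not be the literal Jordan basis producing a $\widetilde{\BJ}_N^{[k]}$ in Jordan form, but since all we claim is a spectral (block-diagonalizing) decomposition with $\rho(\widetilde{\BJ}_N^{[k]})<1$, it suffices that $\BP_N$ block-diagonalizes; one can afterwards refine within the invariant subspace if a finer canonical form is wanted, but that refinement would reintroduce $k$-dependence and is not needed. The main obstacle, and the place I would be most careful, is the bookkeeping when $\widetilde{R}$ itself has an eigenvalue equal to $1$: there the two images $\gamma_1 = 1, \gamma_2 = \tau^{[k]}$ are $k$-dependent and $\gamma_1 = 1$ would seem to contribute to the dominant eigenspace — but $\gamma_1 = 1$ corresponds to the eigenvector $\bpm \widetilde{\Bw}_2\\\widetilde{\Bw}_2\epm$, which is precisely (a representative of) $\fwas$ by Lemma~\ref{FISTA-KT-2}, so it is already accounted for by the first column, and the genuinely new directions are spanned by the fixed zero-padded pair with the only moving piece being the scalar $\tau^{[k]}$ sitting safely in $\widetilde{\BJ}_N^{[k]}$.
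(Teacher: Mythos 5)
Your overall strategy --- fix the common right eigenvector $\fwas$ (shared by all $\Nak$ via the uniqueness/fixed-point lemmas) and exhibit a $k$-independent complementary invariant subspace --- is the same idea as the paper's proof, and your first paragraph (simplicity of the eigenvalue $1$ removes the $2\times2$ Jordan block and forces the form \eqref{eq:J-FISTA-simple} with $\rho(\widetilde{\BJ}_N^{[k]})<1$) is fine. The genuine gap is in how you build the complement. You take, for each eigenvector $\widetilde{\Bw}_2$ of $\widetilde{R}$, the zero-padded pair $(\widetilde{\Bw}_2,0,0)$, $(0,\widetilde{\Bw}_2,0)$, and for $\BP_N$ to be square and invertible you need the union of these pairs together with $\fwas$ to be a basis of $\R^{2n+1}$, i.e.\ you need the eigenvectors of $\widetilde{R}$ to span $\R^n$. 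Nothing in the paper or in your argument establishes that $\widetilde{R}=(I-\rfrc{1}{L}A^{\T}A)\widetilde{D}^2$ is diagonalizable: Lemmas \ref{lemma:ISTA-norm} and \ref{lemma:FISTA-norm} only give a complete set of eigenvectors for the eigenvalue $1$ (the norm-equals-spectral-radius argument), not for the remaining eigenvalues. If $\widetilde{R}$ were defective, your spanning set would have dimension strictly less than $2n$ and the step ``hence $\Nak=\BP_N\BJ_N^{[k]}\BP_N^{-1}$'' would fail as written. (The fact is true --- $\widetilde R$ is a PSD matrix times a $0/1$ diagonal projection, and a short kernel argument shows every eigenvalue is semisimple --- but you would have to prove it.)

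The repair is exactly the paper's construction, which also shows the eigenstructure of $\widetilde{R}$ is irrelevant at this point: every $\Nak$ has the same left eigenvector $\Bz=(0,\dots,0,1)^{\T}$ for eigenvalue $1$, since its last row is $(0,\dots,0,1)$; hence the subspace $\{\Bz\}^{\perp}$ of augmented vectors with last coordinate $0$ --- which is precisely the space your zero-padded vectors are trying to span --- is invariant under every $\Nak$ for free, and one takes $\BW$ to be any fixed basis of it and sets $\BP_N=[\fwas,\BW]$; nonsingularity is immediate because $\fwas$ has last coordinate $1$ (this also replaces your roundabout spectral argument for the trivial intersection). Finally, your treatment of the case where $\widetilde{R}$ has an eigenvalue $1$ is incorrect as stated: the corresponding eigenvector $(\widetilde{\Bw}_2,\widetilde{\Bw}_2,0)$ is not a representative of $\fwas$, whose last coordinate is $1$. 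That case simply cannot occur under your hypotheses, because by Lemma \ref{lemma:FISTA-norm}(c) it would give $\Nak$ an eigenvalue $1$ of multiplicity at least two, contradicting the simplicity you already invoked.
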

	
	\begin{proof}
		If the LASSO problem has a unique solution, by Lemma \ref{lemma:fixed-point1} \& \ref{lemma:fixed-point2}, for any $k$, $\Nak$ must share the same single dominant eigenpair $(\lambda=1, \fwas)$.
		This is a simple eigenvalue with a fixed left eigenvector $\Bz^{\T} = (0,\cdots,0,1)$.
		Therefore, we can construct   $\BP_N = \left[  \fwas, \BW\right]$ where $\BW \in \R^{(2n+1) \times 2n} $ forms the basis for space $\{\Bz\}^{\perp}$ and its inverse $\BP_N^{-1}$ has the form $\BP_N^{-1} = \left[ \Bz, \BZ\right]^{\T}$ by scaling, where $\BZ$ is a basis for $(\fwas)^{\perp}$ determined by $ \fwas$, $\Bz$ and $\BW$.
		So $\BP_N$ is a matrix independent of any $\tau^{[k]}$ that for $\Nak$, 
		\begin{equation*}
		\BP_{N}\Nak \BP_{N}^{-1} = \left[ \fwas, \BW\right] \Nak\left[ \Bz, \BZ\right]^{\T} = \bpm 1 & 0 \\ 0& \widetilde{\BJ}_N^{[k]} \epm.
		\end{equation*}
        \qquad \end{proof}

\begin{lemma}\label{lemma:FISTA-tau-1}
	We denote matrix ${\bf N}_{\bf aug}'$ as $\Nak$  in which $\tau = 1$, then one can write $\Nak = {\bf N}_{\bf aug}' + (1-\tau^{[k]}) {\bf \barN}_{\bf aug}$, where 
	\be
	\begin{aligned}
	& &{\bf N}_{\bf aug}' = \bpm 2\widetilde{R}^{[k]} & -\widetilde{R}^{[k-1]} & \rfrc{\lambda}{L}[-2\widetilde{\Bd}^{[k]} +\widetilde{\Bd}^{[k-1]}] \\ I & 0 & 0 \\ 0 & 0 & 1 \epm \\ 
	\text{and} & \quad &{\bf \barN}_{\bf aug} =\bpm -\widetilde{R}^{[k]} & \widetilde{R}^{[k-1]} & \rfrc{\lambda}{L}[\widetilde{\Bd}^{[k]} - \widetilde{\Bd}^{[k-1]}]\\ 0 & 0 & 0 \\ 0 & 0 & 0 \epm.
	\end{aligned}
	\ee 
	
	If $D^{[k]} = D^{[k-1]}$ and $\Nak$ has only a simple eigenvalue equal to $1$, then ${\bf N}_{\bf aug}'$ must also have a simple eigenvalue equal to $1$ and $\Nak = \BP_{N}\BJ_{N}^{[k]} \BP_{N}^{-1} =\BP_{N}\BJ_{N'}  \BP_{N}^{-1} + (1-\tau^{[k]}) \BP_{N}\BJ_{\barN} \BP_{N}^{-1} $ with the same $\BP_{N}$ in Lemma \ref{lemma:FISTA-simple}. Note that
	$
	\BJ_{N'} 
	= \bpm 1 & 0 \\ 0& \widetilde{\BJ}_{N'} \epm
	$
	and 
	$
	\BJ_{\barN}
	= \bpm 0 & 0 \\ 0& \widetilde{\BJ}_{\barN} \epm
	$, 
	where $\widetilde{\BJ}_{N'}$ and  $\widetilde{\BJ}_{\barN}$ are matrices with spectral radius strictly less than $1$. Consequently,  $\widetilde{\BJ}_N^{[k]} = \widetilde{\BJ}_{N'}+(1-\tau^{[k]}) \widetilde{\BJ}_{\barN}$. 
\end{lemma}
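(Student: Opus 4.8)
The plan is to establish the claim in four steps: (i) the algebraic splitting $\Nak = {\bf N}_{\bf aug}' + (1-\tau^{[k]}){\bf \barN}_{\bf aug}$, (ii) that ${\bf N}_{\bf aug}'$ again has a simple eigenvalue $1$, (iii) that a single transformation $\BP_N$ block-diagonalizes all of $\Nak$, ${\bf N}_{\bf aug}'$ and ${\bf \barN}_{\bf aug}$ consistently, and (iv) the bounds $\rho(\widetilde{\BJ}_{N'})<1$ and $\rho(\widetilde{\BJ}_{\barN})<1$. Step (i) is pure substitution: in the blocks $P^{[k]}=(1+\tau^{[k]})\widetilde R^{[k]}$, $Q^{[k-1]}=-\tau^{[k]}\widetilde R^{[k-1]}$ and $\bar{\Bh}^{[k]}$ of \eqref{R-tilde} I would write $\tau^{[k]} = 1-(1-\tau^{[k]})$ and collect the $\tau^{[k]}$-free part against the $(1-\tau^{[k]})$-multiple; using $\widetilde D^{[k]}=\widetilde D^{[k-1]}$ (so $\widetilde R^{[k]}=\widetilde R^{[k-1]}$ and $\widetilde{\Bd}^{[k]}=\widetilde{\Bd}^{[k-1]}$) this reproduces exactly the two displayed matrices.

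For step (ii): $\Nak$ is block upper triangular with a $1\times1$ lower-right block $1$, so $1$ is an eigenvalue; the hypothesis that it is simple makes its eigenvector $\fwas$ unique up to scaling, so Lemma \ref{lemma:unique} forces the upper-left block $N^{[k]}$ to have no eigenvalue $1$, whence Lemma \ref{lemma:FISTA-norm}(c) gives that $\widetilde R^{[k]}$ has no eigenvalue $1$. The upper-left block of ${\bf N}_{\bf aug}'$ is $N^{[k]}$ evaluated at $\tau=1$; Lemma \ref{lemma:FISTA-norm}(c) applied with $\tau=1$ shows it has an eigenvalue $1$ iff $\widetilde R^{[k]}$ does, so it has none, and therefore $1$ is a simple eigenvalue of the block-triangular matrix ${\bf N}_{\bf aug}'$ (and $\rho(N')<1$ by Lemma \ref{lemma:FISTA-norm}(b)).

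Step (iii) is the heart of the argument. Take $\BP_N=[\fwas,\BW]$, $\BP_N^{-1}=[\Bz,\BZ]^{\T}$ as in Lemma \ref{lemma:FISTA-simple}, with $\Bz^{\T}=(0,\dots,0,1)$ a fixed left eigenvector for eigenvalue $1$. The right eigenvector $\fwas=\bpm \widetilde{\Bw}^*\\ \widetilde{\Bw}^*\\ 1\epm$ solves a fixed-point equation $\widetilde{\Bw}^*=\widetilde R^{[k]}\widetilde{\Bw}^*+(\text{vector independent of }\tau)$, because $P^{[k]}+Q^{[k-1]}=\widetilde R^{[k]}$ and the $\tau$-dependent terms of $\bar{\Bh}^{[k]}$ cancel when $\widetilde D^{[k]}=\widetilde D^{[k-1]}$; hence $\fwas$ is the eigenvector of eigenvalue $1$ for both $\Nak$ and ${\bf N}_{\bf aug}'$. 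Since the last two block rows of ${\bf \barN}_{\bf aug}$ vanish and $\widetilde R^{[k]}=\widetilde R^{[k-1]}$, $\widetilde{\Bd}^{[k]}=\widetilde{\Bd}^{[k-1]}$, one checks ${\bf \barN}_{\bf aug}\fwas=0$, and dually $\Bz^{\T}{\bf \barN}_{\bf aug}=0$ while $\Bz^{\T}$ is the last row of $\Nak$ and of ${\bf N}_{\bf aug}'$. These facts pin down the first row and first column of each conjugated matrix, giving $\BP_N^{-1}{\bf N}_{\bf aug}'\BP_N=\bpm 1&0\\ 0&\widetilde{\BJ}_{N'}\epm$ and $\BP_N^{-1}{\bf \barN}_{\bf aug}\BP_N=\bpm 0&0\\ 0&\widetilde{\BJ}_{\barN}\epm$; conjugating the step-(i) identity then yields $\widetilde{\BJ}_N^{[k]}=\widetilde{\BJ}_{N'}+(1-\tau^{[k]})\widetilde{\BJ}_{\barN}$.

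For step (iv): the eigenvalues of $\widetilde{\BJ}_{N'}$ are exactly those of $N'$, all $<1$ in modulus by Lemma \ref{lemma:FISTA-norm}(b) since $N'$ has no eigenvalue $1$; and for ${\bf \barN}_{\bf aug}=\bpm -\widetilde R^{[k]}&\widetilde R^{[k]}&\ast\\ 0&0&0\\ 0&0&0\epm$, every nonzero eigenvalue has an eigenvector whose last two blocks vanish, hence is an eigenvalue of $-\widetilde R^{[k]}$, which lies in $[-1,0]$ by Lemma \ref{lemma:ISTA-norm}(b) applied to $\widetilde R$, with $-1$ excluded because $\widetilde R^{[k]}$ has no eigenvalue $1$; thus $\rho({\bf \barN}_{\bf aug})<1$ and so $\rho(\widetilde{\BJ}_{\barN})<1$. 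The main obstacle I anticipate is step (iii), namely certifying that one fixed $\BP_N$ serves all three matrices — this rests on the $\tau$-independence of $\fwas$ and on the identity ${\bf \barN}_{\bf aug}\fwas=0$; after those are secured everything else is bookkeeping. A small care point is aligning the hypothesis $D^{[k]}=D^{[k-1]}$ used here with the hypothesis $\widetilde D^{[k+1]}=\widetilde D^{[k]}$ under which $\BP_N$ was constructed in Lemma \ref{lemma:FISTA-simple}, so that $\widetilde R^{[k]}=\widetilde R^{[k-1]}$ and $\BP_N$ are both legitimately available.
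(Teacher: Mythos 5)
Your proof is correct and follows essentially the same route as the paper: split off the $\tau=1$ part, use Lemma \ref{lemma:FISTA-norm}(b)--(c) to conclude that ${\bf N}_{\bf aug}'$ still has a simple eigenvalue $1$, note that $\fwas$ and $(0,\dots,0,1)^{\T}$ remain the right/left eigenvectors for eigenvalue $1$ so the same $\BP_{N}$ of Lemma \ref{lemma:FISTA-simple} block-diagonalizes both pieces, and subtract to obtain $\widetilde{\BJ}_N^{[k]} = \widetilde{\BJ}_{N'}+(1-\tau^{[k]})\widetilde{\BJ}_{\barN}$. Your additional step (iv), which pins down $\rho(\widetilde{\BJ}_{\barN})<1$ through the eigenstructure of ${\bf \barN}_{\bf aug}$ (nonzero eigenvalues being eigenvalues of $-\widetilde{R}^{[k]}$, with $-1$ excluded), supplies a detail the paper asserts without spelling out, and is a sound refinement of the same argument.
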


\begin{proof}
	By Lemma \ref{lemma:FISTA-norm}(b), eigenvalue of ${\bf N}_{\bf aug}'$ must lie in the disk $\Dcal(\rfrc{1}{2}, \rfrc{1}{2})$. Besides, now that $\Nak$ has only a simple eigenvalue equal to $1$, by Lemma \ref{lemma:FISTA-norm}(c), $\widetilde{R}^{[k]}$ should have no eigenvalue equal to $1$. This indicates matrix	$\bpm 2\widetilde{R}^{[k]} & -\widetilde{R}^{[k-1]} \\ I & 0\epm$ has no eigenvalue equal to $1$. And hence ${\bf N}_{\bf aug}'$ must also have a simple eigenvalue equal to $1$. 
	
	Due to Lemma \ref{lemma:fixed-point2}, $(1, \fwas)$ is in the right nullspace of ${\bf N}_{\bf aug}' - I$. Additionally, $(0,\cdots,0,1)^{\T}$  is in the left nullspace of ${\bf N}_{\bf aug}' - I$. Therefore, one can construct the same $\BP_{N}$ in Lemma \ref{lemma:FISTA-simple} such that 
	${\bf N}_{\bf aug}'  = \BP_{N} \bpm 1 & 0 \\ 0& \widetilde{\BJ}_{N'} \epm\BP_{N}^{-1}$. According to the relation $\Nak = {\bf N}_{\bf aug}' + (1-\tau^{[k]}) {\bf \barN}_{\bf aug}$, this $\BP_{N}$ must work as well for eigenvalue $0$ of ${\bf \barN}_{\bf aug}$.
\qquad \end{proof} }
\subsection{Local Linear Convergence} \label{sec:local}
In the case that \eqref{LASSO} has a unique solution with strict complementarity, we can give a guarantee that eventually the flag matrix will not change. By strict complementarity, we mean that for every index $i$, $\widehat{w}^*_i \neq \pm \rfrc{\lambda}{L}$(or $\widetilde{w}^*_i \neq \pm \rfrc{\lambda}{L}$). Once the flag matrix stays fixed, the ISTA (or FISTA) iteration behaves just like the power method (or similar to power method) for the matrix eigenvalue problem. In this case, the spectral theory developed here gives a guarantee of linear convergence with the rate equal to the second largest eigenvalue of the matrix operator.

In this section we will use the $l_\infty$ norm of a vector: $\|\Bv\|_{\infty}=\max_i |v_i|$, and the associated induced matrix norm $\|A\|_{\infty}= \max_i\sum_j|a_{ij}|$. We will also use the matrix G-norm where G is a non-singular matrix, defined to be $\|\Bx\|_G = \|G\Bx\|_{\infty}$ for any vector $\Bx$, and $\|A\|_G = \|GAG^{-1}\|_{\infty}$ for any matrix A. The following lemma relates the vector $\infty$-norm to the vector $2$-norm. 

\begin{lemma}
For any n-vectors $\Ba, \Bb$, $(\|\Ba\|_{\infty}+\|\Bb\|_{\infty})^2 \leq 2(\|\Ba\|_2^2+\|\Bb\|_2^2)$
\end{lemma}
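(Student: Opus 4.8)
The plan is to reduce the claimed inequality to two elementary facts chained together. First I would invoke the standard norm comparison $\|\Bv\|_{\infty}\le\|\Bv\|_{2}$, valid for any vector $\Bv$ (since the largest component in absolute value is bounded by the Euclidean length). Applying this to both $\Ba$ and $\Bb$ and using monotonicity of $t\mapsto t^2$ on $[0,\infty)$ gives
\be
(\|\Ba\|_{\infty}+\|\Bb\|_{\infty})^2 \le (\|\Ba\|_{2}+\|\Bb\|_{2})^2.
\ee

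Second, I would apply the scalar inequality $(s+t)^2\le 2(s^2+t^2)$, which holds for all real $s,t$ because it is equivalent to $(s-t)^2\ge 0$. Taking $s=\|\Ba\|_{2}$ and $t=\|\Bb\|_{2}$ yields $(\|\Ba\|_{2}+\|\Bb\|_{2})^2\le 2(\|\Ba\|_{2}^2+\|\Bb\|_{2}^2)$. Combining the two displayed bounds gives $(\|\Ba\|_{\infty}+\|\Bb\|_{\infty})^2\le 2(\|\Ba\|_{2}^2+\|\Bb\|_{2}^2)$, which is exactly the statement.

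There is essentially no obstacle here; the only point worth stating explicitly is the pair of auxiliary facts $\|\cdot\|_{\infty}\le\|\cdot\|_{2}$ and $(s+t)^2\le 2(s^2+t^2)$, both of which are standard and require only a one-line justification each. I expect the proof in the paper to be a two- or three-line computation following precisely this chain.
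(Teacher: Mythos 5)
Your proof is correct: the chain $\|\cdot\|_{\infty}\le\|\cdot\|_2$ followed by $(s+t)^2\le 2(s^2+t^2)$ establishes the inequality, and this is the standard elementary argument. The paper itself gives no proof, deferring to the cited reference, so your two-line derivation is exactly what is expected and nothing further is needed.
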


We refer readers to \cite{B13} for the proof. 

Under the assumption of strict complementarity and unique solution, we can prove the specific result that ISTA (or FISTA) iteration must eventually reach and remain in ``linear convergence" regime [A]. First we note that by Lemma \ref{ISTA-KT-2} (or \ref{FISTA-KT-2}), this solution must correspond to a unique eigenvector of eigenvalue $ 1$ for the matrix $\Ra$ (or $\Na$) when the flag matrix $D^{[k]} = D^{[k+1]}$ does not change. Additionally, by Lemma \ref{lemma:unique}, the matrix $R$ (or $N$) has no eigenvalue equal to 1, and by Lemma \ref{lemma:ISTA-norm} (or \ref{lemma:FISTA-norm}) all the eigenvalues must be strictly less than 1 in the absolute value. Hence the following lemma applies to this situation.

\begin{lemma}\label{lemma:6.2}
Consider the general augmented matrix and its eigenvector
\be
\Ma = \bpm M & \Bp \\ 0 & 1 \epm \text{and} \quad  {\bf w}^{*}_{\bf aug} = \bpm \Bw^{*} \\ 1 \epm \text{such that}\quad \Ma  {\bf w}^{*}_{\bf aug} =  {\bf w}^{*}_{\bf aug}
\ee
where $M$ is any $n\times n$ matrix such that the spectral radius $\rho$ of $M$ satisfies $\rho(M) < 1$. The vector $\Bw^*$ is the unique eigenvector corresponding to eigenvalue 1, scaled so that its last element is $\Bw_{n+1}^*=1$. Then the following holds.

{\em (a)}. For any $\epsilon > 0$, there is a matrix norm $\|\cdot\|_G$ such that $\rho(M) \leq \|M\|_G < \rho(M) + \epsilon$. In particular, one can choose $G$ so that $\|M\|_G <1 $. Also, there is a positive constant $C_1$ (depending on $M$ \& $G$) such that for any vector or matrix $X$, $\|X\|_G \leq C_1 \|X\|_{\infty}$ and $\|X\|_{\infty} \leq C_1 \|X\|_G$.

{\em (b)}. The iterates of the power iteration ${\bf w}^{[k+1]}_{\bf aug} = \Ma \wak$ satisfy $\|\wak - {\bf w}^{*}_{\bf aug}\|_G \leq \|M\|_G^{k} \|{\bf w}^{[0]}_{\bf aug} - {\bf w}^{*}_{\bf aug}\|_G$ and hence converge linearly to ${\bf w}^{*}_{\bf aug}$ at a rate bounded by $\rho(M) + \epsilon$ where $\epsilon$ is the same arbitrary constant used in {\em (a)}. This is a special case of the theory behind the power method for computing matrix eigenvalues.

{\em (c)}. Given any positive constant $C_2$, if ${\bf w}^{[0]}_{\bf aug}$ is any vector such that $\|{\bf w}^{[0]}_{\bf aug}-{\bf w}^{*}_{\bf aug}\|_{\infty} \leq C_2/ C_1^2$ then $\|(\Ma)^k {\bf w}^{[0]}_{\bf aug}- {\bf w}^{*}_{\bf aug}\|_{\infty} \leq C_2$ for all $k$. In particular, if $w^*_i$ is bounded away from two points $ \pm \rfrc{\lambda}{L}$ and $C_2= \min_i \{|w_i^* - \rfrc{\lambda}{L}| - \epsilon, |w_i^*+ \rfrc{\lambda}{L}| - \epsilon\}>0$, then any element of vector $(\Ma)^k {\bf w}^{[0]}_{\bf aug}$ should be bounded away from two points $ \pm \rfrc{\lambda}{L}$ for all $k=0,1,2,\cdots$.
\end{lemma}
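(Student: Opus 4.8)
The plan is to prove the three parts in order, each reducing to standard facts about matrix norms and the power method, with the augmented-matrix structure handled carefully. Throughout, write $\Ma = \bpm M & \Bp \\ 0 & 1 \epm$ and $\wa^* = \bpm \Bw^* \\ 1 \epm$.

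For (a), the existence of a norm $\|\cdot\|_G$ with $\rho(M) \le \|M\|_G < \rho(M)+\epsilon$ is exactly Theorem~\ref{thm:spectral property}(c) (or the classical fact that one can get within $\epsilon$ of the spectral radius in some induced norm, true even when $M$ is not diagonalizable). Since $\rho(M)<1$, choosing $\epsilon$ small enough gives $\|M\|_G<1$. The constant $C_1$ comes from equivalence of norms on the finite-dimensional space of matrices (resp. vectors): $\|\cdot\|_G$ and $\|\cdot\|_\infty$ are equivalent, so there is a single constant $C_1\ge 1$ with $\|X\|_G \le C_1\|X\|_\infty$ and $\|X\|_\infty \le C_1\|X\|_G$ for all $X$ (take $C_1$ to be the max of the two equivalence constants, also making $C_1\ge1$). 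I'd state this for the relevant dimension and remark it applies to both vectors and matrices of the sizes appearing below.

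For (b), I would first record that $\Ma \wak = \wak^{[k+1]}$ together with $\Ma \wa^* = \wa^*$ gives, on subtracting, $\wak^{[k+1]} - \wa^* = \Ma(\wak - \wa^*)$. The key point is that the error vector $\wak - \wa^*$ has last component $0$ (both $\wak$ and $\wa^*$ have last component $1$), so writing $\wak - \wa^* = \bpm \Bu^{[k]} \\ 0 \epm$ we get $\wak^{[k+1]} - \wa^* = \bpm M\Bu^{[k]} \\ 0 \epm$, i.e. the recurrence collapses to $\Bu^{[k+1]} = M\Bu^{[k]}$. Hence $\|\wak - \wa^*\|_G = \|M^k \Bu^{[0]}\|_G \le \|M\|_G^k \|\Bu^{[0]}\|_G = \|M\|_G^k \|\wak^{[0]} - \wa^*\|_G$, which with $\|M\|_G \le \rho(M)+\epsilon < 1$ is linear convergence at the claimed rate.

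For (c), combine (a) and (b): starting from $\|\wak^{[0]} - \wa^*\|_\infty \le C_2/C_1^2$, use $\|\cdot\|_G \le C_1\|\cdot\|_\infty$ to get $\|\wak^{[0]} - \wa^*\|_G \le C_2/C_1$, then $\|M\|_G<1$ and (b) give $\|(\Ma)^k\wak^{[0]} - \wa^*\|_G \le C_2/C_1$, and finally $\|\cdot\|_\infty \le C_1\|\cdot\|_G$ yields $\|(\Ma)^k\wak^{[0]} - \wa^*\|_\infty \le C_2$ for all $k$, as claimed. For the last sentence, if $C_2 = \min_i\{|w_i^* - \rfrc{\lambda}{L}| - \epsilon,\, |w_i^* + \rfrc{\lambda}{L}| - \epsilon\} > 0$ then every component of $(\Ma)^k\wak^{[0]}$ differs from the corresponding $w_i^*$ by at most $C_2$ in absolute value, so by the triangle inequality it stays at distance at least $\epsilon$ from each of $\pm\rfrc{\lambda}{L}$; I'd note this gives exactly the margin needed to keep the flag matrix unchanged (the point of regime [A]). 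The main obstacle — really the only subtle point — is making sure the reduction from the $(n+1)$-dimensional augmented recurrence to the $n$-dimensional recurrence $\Bu^{[k+1]}=M\Bu^{[k]}$ is airtight, i.e. that the error always lives in the hyperplane $\{$last coordinate $=0\}$ and that $\|\cdot\|_G$, $C_1$ are applied consistently across the vector and matrix sizes in play; everything else is bookkeeping with equivalence of norms.
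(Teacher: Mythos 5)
Your proposal is correct and follows essentially the same route as the paper: part (c) is exactly the paper's nested-ball argument ($\Bcal_3\subseteq\Bcal_2\subseteq\Bcal_1$ via the two norm-equivalence constants and the contraction from (b)), while for (a) and (b) you simply write out the standard arguments (norm within $\epsilon$ of the spectral radius, and the collapse of the augmented recurrence to $\Bu^{[k+1]}=M\Bu^{[k]}$ since the error has last coordinate $0$) that the paper defers to its reference \cite{B13}.
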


\begin{proof}
The proof of (a) and (b) are omitted. They are similar to those of the lemma 6.2 in \cite{B13}. For (c), we make more comments. Define $G_{\bf aug} = \bpm G & 0 \\ 0&1 \epm$ with the $G$ from part (a), and define the corresponding $G_{\text{aug}}$-norm on the augmented quantities. Define the following balls around the eigenvector ${\bf w}^{*}_{\bf aug}$:
\be
\begin{aligned}
&\Bcal_1 & = & \{\wa: \|\wa - {\bf w}^{*}_{\bf aug}\|_{\infty} \leq C_2 \} \\
&\Bcal_2 & = & \{\wa: \|\wa - {\bf w}^{*}_{\bf aug}\|_{G_{\bf{aug}}} \leq C_2/C_1 \} \\
&\Bcal_3 & = & \{\wa: \|\wa - {\bf w}^{*}_{\bf aug}\|_{\infty} \leq C_2/C_1^2 \}.
\end{aligned}
\ee
From part (a), $\Bcal_3\subseteq \Bcal_2\subseteq \Bcal_1$. From part (b), if any power method iterate satisfies $\Bw^{[0]}\in \Bcal_2$, then all subsequent iterates stay in $\Bcal_2$. Hence if the power method starts in $\Bcal_3$, all subsequent iterates will lie in $\Bcal_1$.
\qquad \end{proof}

We now invoke the global convergence property of ISTA and FISTA.

\begin{theorem}\label{thm:conv}
If problem \ref{LASSO} is solvable, i.e. $X^*:= \argmin F(\Bx)$, where $F(\Bx) = \rfrc{1}{2}\|A\Bx - \Bb\|_2^2 + \lambda \|\Bx\|_1$. If let $\widehat{\Bx}^{[0]}$ be any starting point in $\R^n$ and $\widehat{\Bx}^{[k]}$ be the sequence generated by ISTA. Then for any $k \geq 1$, $\forall \widehat{\Bx}^* \in X^*$, $F(\widehat{\Bx}^{[k]}) - F(\widehat{\Bx}^{*})$ decreases at the rate of $O(\rfrc{1}{k})$. On the other hand, if let $\By^{[1]}=\widetilde{\Bx}^{[0]}$ be any starting point in $\R^n$, $t^{[1]} = 1$ and $\{\widetilde{\Bx}^{[k]}\}, \{\By^{[k]}\}, \{t^{[k]}\}$ be the sequence generated by FISTA. Then for any $k \geq 1$, $\forall \widetilde{\Bx}^* \in X^*$, we have $F(\widetilde{\Bx}^{[k]}) - F(\widetilde{\Bx}^*)$ decreases at the rate of $O(\rfrc{1}{k^2})$. 
\end{theorem}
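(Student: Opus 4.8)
The plan is to appeal to the analysis of Beck and Teboulle \cite{BT09}, who proved exactly these two rates; for the reader's convenience I would outline the argument rather than reproduce it in full. The single tool behind both bounds is the \emph{sufficient-decrease} inequality for a proximal-gradient step: writing $p_L(\By)$ for the result of one shrinkage-thresholding step applied to $F=f+g$ at the point $\By$ (so that $p_L(\By)=\text{Shr}_{\lambda/L}(\By-\tfrac1L\nabla f(\By))$), one has, for every $\Bx\in\R^n$ and every $L\geq L(f)$,
\[
F(\Bx)-F(p_L(\By))\;\geq\;\tfrac{L}{2}\,\|p_L(\By)-\By\|^2\;+\;L\,\langle\,\By-\Bx,\;p_L(\By)-\By\,\rangle .
\]
This follows by adding the descent lemma for the $L(f)$-smooth part $f$ to the subgradient inequality coming from optimality of the prox step for $g=\lambda\|\cdot\|_1$; it applies here with $L=\|A^{\T}A\|_2\geq L(f)$.

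For ISTA I would use this inequality twice. Taking $\By=\Bx=\widehat{\Bx}^{[k]}$ shows $F(\widehat{\Bx}^{[k+1]})\leq F(\widehat{\Bx}^{[k]})$, i.e. the objective is nonincreasing. Taking $\By=\widehat{\Bx}^{[k]}$ and $\Bx=\widehat{\Bx}^{*}$, the right-hand side collapses via the identity $\|a-b\|^2+2\langle a-b,\,b-c\rangle=\|a-c\|^2-\|b-c\|^2$ into $\tfrac{L}{2}\bigl(\|\widehat{\Bx}^{[k+1]}-\widehat{\Bx}^{*}\|^2-\|\widehat{\Bx}^{[k]}-\widehat{\Bx}^{*}\|^2\bigr)$, so that $F(\widehat{\Bx}^{[k+1]})-F(\widehat{\Bx}^{*})\leq\tfrac{L}{2}\bigl(\|\widehat{\Bx}^{[k]}-\widehat{\Bx}^{*}\|^2-\|\widehat{\Bx}^{[k+1]}-\widehat{\Bx}^{*}\|^2\bigr)$. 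Summing over $k$ telescopes the right-hand side, and using monotonicity to bound the sum below by $k$ copies of its last term yields $F(\widehat{\Bx}^{[k]})-F(\widehat{\Bx}^{*})\leq\tfrac{L\,\|\widehat{\Bx}^{[0]}-\widehat{\Bx}^{*}\|^2}{2k}$, the $O(1/k)$ claim.

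For FISTA the extra ingredient is the scalar recursion $t^{[k+1]}=\tfrac12\bigl(1+\sqrt{1+4(t^{[k]})^2}\bigr)$, which gives both $(t^{[k+1]})^2-t^{[k+1]}=(t^{[k]})^2$ and, by induction, $t^{[k]}\geq(k+1)/2$. Putting $v^{[k]}=F(\widetilde{\Bx}^{[k]})-F(\widetilde{\Bx}^{*})$ and $\Bu^{[k]}=t^{[k]}\widetilde{\Bx}^{[k]}-(t^{[k]}-1)\widetilde{\Bx}^{[k-1]}-\widetilde{\Bx}^{*}$, I would apply the sufficient-decrease inequality at $\By=\By^{[k+1]}$ twice, once with $\Bx=\widetilde{\Bx}^{[k]}$ and once with $\Bx=\widetilde{\Bx}^{*}$, take the convex combination of the two with weights $t^{[k+1]}-1$ and $1$, and substitute $\By^{[k+1]}=\widetilde{\Bx}^{[k]}+\tfrac{t^{[k]}-1}{t^{[k+1]}}(\widetilde{\Bx}^{[k]}-\widetilde{\Bx}^{[k-1]})$ from Alg.\ 2; the cross terms reorganize (again by the parallelogram identity) into $(t^{[k]})^2v^{[k]}-(t^{[k+1]})^2v^{[k+1]}\geq\tfrac{L}{2}\bigl(\|\Bu^{[k+1]}\|^2-\|\Bu^{[k]}\|^2\bigr)$. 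Hence $(t^{[k]})^2v^{[k]}+\tfrac{L}{2}\|\Bu^{[k]}\|^2$ is nonincreasing; bounding it by its value at $k=1$ and using $t^{[k]}\geq(k+1)/2$ gives $v^{[k]}\leq\tfrac{2L\,\|\widetilde{\Bx}^{[0]}-\widetilde{\Bx}^{*}\|^2}{(k+1)^2}$, the $O(1/k^2)$ claim.

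The only delicate step is the FISTA bookkeeping: choosing the weights in the convex combination of the two prox inequalities, and verifying that the particular form of $\By^{[k+1]}$ together with the identity $(t^{[k+1]})^2-t^{[k+1]}=(t^{[k]})^2$ makes the $\|\Bu\|^2$ terms telescope exactly; everything else is routine algebra. Since \cite{BT09} carries out this computation in complete detail, the cleanest course for the paper is simply to invoke that reference.
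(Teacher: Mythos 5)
Your proposal is correct and matches the paper's treatment: the paper offers no proof of Theorem \ref{thm:conv} at all, simply noting that it is a restatement of the convergence results of Beck and Teboulle \cite{BT09}, which is exactly the course you recommend (and your sketch of the sufficient-decrease inequality, the telescoping argument for ISTA, and the $t^{[k]}\geq (k+1)/2$ bookkeeping for FISTA is a faithful outline of that reference's argument).
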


This is a restatement of the convergence theorem in \cite{BT09}. It says little on the local behavior of the algorithm. Under the assumption of the unique solution, it guarantees that eventually the iterates will converge to the optimal value. With the iterates convergence, we present our main results in the following theorems.

\begin{theorem}\label{thm:linear-conv-ista}
Suppose the LASSO problem \ref{LASSO} has a unique solution and this solution has strict complementarity: that is for every index $i$, $\widehat{w}^*_i \neq \pm \rfrc{\lambda}{L}$. Then eventually the ISTA iteration reaches a stage where it converges linearly to that unique solution.
\end{theorem}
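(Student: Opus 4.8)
\emph{Proof proposal.} The plan is to show that global convergence of ISTA (Theorem \ref{thm:conv}) drives the auxiliary iterate $\widehat{\Bw}^{[k]}$ into a neighborhood of the fixed point where strict complementarity freezes the flag matrix, and then to read the linear rate off the resulting fixed operator $\Ra$ via the power-method estimate of Lemma \ref{lemma:6.2}. First I would observe that $F(\Bx)=\rfrc{1}{2}\|A\Bx-\Bb\|_2^2+\lambda\|\Bx\|_1$ is coercive with the unique minimizer $\widehat{\Bx}^*$; since Theorem \ref{thm:conv} gives $F(\widehat{\Bx}^{[k]})\to F(\widehat{\Bx}^*)$, boundedness of the sublevel sets forces $\widehat{\Bx}^{[k]}\to\widehat{\Bx}^*$, hence $\widehat{\Bw}^{[k]}=(I-\rfrc{1}{L}A^{\T}A)\widehat{\Bx}^{[k]}+\rfrc{1}{L}A^{\T}\Bb\to\widehat{\Bw}^*:=\widehat{\Bx}^*+\rfrc{\lambda}{L}\Bnu$ with $\Bnu=\rfrc{1}{\lambda}A^{\T}(\Bb-A\widehat{\Bx}^*)$, and by Lemma \ref{ISTA-KT-2} the augmented vector $(\widehat{\Bw}^*,1)^{\T}$ is an eigenvector of the associated $\Ra$ for eigenvalue $1$.

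Next I would freeze the flags. Put $C_2=\min_i\{\,|\widehat{w}^*_i-\rfrc{\lambda}{L}|,\ |\widehat{w}^*_i+\rfrc{\lambda}{L}|\,\}$, which is strictly positive since there are finitely many indices and $\widehat{w}^*_i\ne\pm\rfrc{\lambda}{L}$ for all $i$. Because $\widehat{\Bw}^{[k]}\to\widehat{\Bw}^*$, there is a $K$ with $\|\widehat{\Bw}^{[k]}-\widehat{\Bw}^*\|_\infty<C_2$ for all $k\ge K$, so by the elementwise rule \eqref{ISTA-def-D} each coordinate of $\widehat{\Bw}^{[k]}$ lies on the same side of $\pm\rfrc{\lambda}{L}$ as $\widehat{w}^*_i$; hence $\widehat{D}^{[k]}=\widehat{D}^*$ for all $k\ge K$. (Lemma \ref{lemma:6.2}(c) gives a self-contained way to phrase this ``trap''.) For $k\ge K$ the recurrence \eqref{ISTA-Raug} is therefore the genuine power iteration $(\widehat{\Bw}^{[k+1]},1)^{\T}=\Ra(\widehat{\Bw}^{[k]},1)^{\T}$ for the single fixed matrix $\Ra=\bpm R & \widehat{\Bh}\\0&1\epm$ built from $\widehat{D}^*$, so we are in one of the regimes [A]--[C].

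The heart of the argument is to exclude the eigenvalue $1$ from $R$, i.e. to land in regime [A]. By Lemma \ref{lemma:ISTA-norm}(b) every eigenvalue of $R$ lies in $[0,1]$, so it suffices to rule out $1$. Here I would use that a unique LASSO solution forces $A_{\Om}$ (the columns indexed by $\Om=\mathrm{supp}(\widehat{\Bx}^*)$) to have full column rank: otherwise a nonzero $\Bh\in\ker A_{\Om}$, extended by zeros, would keep $F$ unchanged along $\widehat{\Bx}^*+t\Bh$ for small $|t|$, contradicting uniqueness. A short computation then shows $R=(I-\rfrc{1}{L}A^{\T}A)(\widehat{D}^*)^2$ has a unit eigenvalue iff $A_{\Om}$ has a nontrivial kernel, so $R$ has no eigenvalue $1$ and $\rho(R)<1$; equivalently, since $(\widehat{\Bw}^*,1)^{\T}$ is then the unique eigenvector of $\Ra$ for eigenvalue $1$, Lemma \ref{lemma:unique} gives the same conclusion (no nontrivial Jordan block at $1$, and $R$ has no eigenvalue $1$).

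Finally I would apply Lemma \ref{lemma:6.2}(a)--(b) with $M=R$ and $\Ma=\Ra$: for any $\epsilon>0$ with $\rho(R)+\epsilon<1$ there is a nonsingular $G$ with $\|R\|_G<1$, and for $k\ge K$, $\|(\widehat{\Bw}^{[k]},1)^{\T}-(\widehat{\Bw}^*,1)^{\T}\|_G\le\|R\|_G^{\,k-K}\|(\widehat{\Bw}^{[K]},1)^{\T}-(\widehat{\Bw}^*,1)^{\T}\|_G$, so $\widehat{\Bw}^{[k]}\to\widehat{\Bw}^*$ linearly (in the $G$-norm, hence in any norm) at rate at most $\rho(R)+\epsilon$; since $\widehat{\Bx}^{[k+1]}=\text{Shr}_{\rfrc{\lambda}{L}}(\widehat{\Bw}^{[k]})$ is $1$-Lipschitz, $\widehat{\Bx}^{[k]}\to\widehat{\Bx}^*$ linearly at the same rate, which is the claim. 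The step I expect to be the main obstacle is the geometric simplicity of the eigenvalue $1$ of $\Ra$ in the third paragraph: the eigenvector-to-KKT correspondence of Lemma \ref{ISTA-KT-2} only controls eigenvectors whose induced flags coincide with $\widehat{D}^*$, so one genuinely needs the full-column-rank consequence of uniqueness (or the explicit kernel computation) to exclude a hidden second eigenvector or a nontrivial Jordan block at $1$.
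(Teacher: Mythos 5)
Your proposal is correct, and its skeleton matches the paper's: use the global convergence guarantee (Theorem \ref{thm:conv}) plus strict complementarity to freeze the flag matrix after some pass $K$, then treat the subsequent ISTA passes as a power iteration on the fixed augmented matrix and invoke Lemma \ref{lemma:6.2}(a)--(b) to get the linear rate $\rho(R)+\epsilon<1$. Where you genuinely diverge is in the two justifications the paper leaves informal. First, you derive iterate convergence $\widehat{\Bx}^{[k]}\to\widehat{\Bx}^*$ from the $O(\rfrc{1}{k})$ objective bound via coercivity of $F$ and uniqueness of the minimizer, and then freeze the flags directly from $\widehat{\Bw}^{[k]}\to\widehat{\Bw}^*$; the paper instead gets ``close once'' from Theorem \ref{thm:conv} and uses the trapping balls of Lemma \ref{lemma:6.2}(c) to keep all later iterates (under the frozen operator) inside the safe region. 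Both work; your version needs the coercivity argument, the paper's needs the ball inclusion $\Bcal_3\subseteq\Bcal_2\subseteq\Bcal_1$. Second, and more substantively, the paper establishes $\rho(R)<1$ by the chain ``unique solution $\Rightarrow$ unique eigenvector of $\Ra$ for eigenvalue $1$ (Lemma \ref{ISTA-KT-2}) $\Rightarrow$ $R$ has no unit eigenvalue (Lemma \ref{lemma:unique})'', whereas you prove it directly: uniqueness forces $A_{\Om}$ to have full column rank on the support, and a kernel computation shows $R=(I-\rfrc{1}{L}A^{\T}A)(\widehat{D}^*)^2$ has a unit eigenvalue exactly when $A_{\Om}$ is rank-deficient. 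Your observation that Lemma \ref{ISTA-KT-2} only identifies flag-consistent eigenvectors with KKT points is a fair criticism of the paper's shortcut, and your route closes that gap at the cost of an extra (easy) computation. One small imprecision: a nonzero $\Bh\in\ker A_{\Om}$ does not necessarily keep $F$ unchanged along $\widehat{\Bx}^*+t\Bh$; if $\sum_{i\in\Om}\sign(\widehat{x}^*_i)h_i\neq0$ then $F$ strictly decreases for one sign of $t$, contradicting optimality, while if the sum vanishes $F$ is constant, contradicting uniqueness---either way the full-column-rank conclusion stands, but you should state both cases.
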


\begin{proof}
Since for any index $i$, $\widehat{w}^*_i  \neq \pm \rfrc{\lambda}{L}$, $\widehat{w}_i^{[j]}$ (where $j$ is the pass number) could only be in one of three cases: $\widehat{w}_i^{[j]}>\rfrc{\lambda}{L}$, $\widehat{w}_i^{[j]}<-\rfrc{\lambda}{L}$ or $-\rfrc{\lambda}{L}\leq \widehat{w}_i^{[j]}\leq \rfrc{\lambda}{L}$. We can let $C_2= \min \{|\widehat{w}_i^* - \rfrc{\lambda}{L}| - \epsilon, |\widehat{w}_i^*+ \rfrc{\lambda}{L}| - \epsilon\}>0$ for a positive constant $\epsilon$ sufficiently small to make $C_2>0$.
	
By theorem \ref{thm:conv}, there exists a pass $k$ such that $\|\widehat{\Bw}^{[k]} - \widehat{\Bw}^*\|_{\infty}^2  < (C_2/C_1^2)$. Hence $\widehat{\Bw}^{[k]}$ lies in $\Bcal_3$ stated in Lemma \ref{lemma:6.2}(c), and $D^{[k]} = \textsc{{Diag}}(\textsc{sign} (\text{Shr}_{\rfrc{\lambda}{L}}(\widehat{\Bw}^{[k]})))$ is the associated flag matrix. By Lemma \ref{lemma:6.2}(c), there exists a pass $K$ such that $\widehat{\Bw}^{[k]}$ lies in $\Bcal_1$ (i.e. $\| \widehat{\bf w}^{[k]}_{\bf aug} - \widehat{\bf w}^{*}_{\bf aug}\|_{\infty} \leq C_2$) for all $k>K$. This means that $\widehat{w}_i^k$ will remain in one of three cases: $\widehat{w}_i^k>\rfrc{\lambda}{L}$, $\widehat{w}_i^k<-\rfrc{\lambda}{L}$ or $-\rfrc{\lambda}{L}\leq\widehat{w}_i^k\leq\rfrc{\lambda}{L}$ and will never change to another case for all $k>K$. This, combined with the definition of flag matrix $\widehat{D}^{[k]} = \textsc{{Diag}}(\textsc{sign} (\text{Shr}_{\rfrc{\lambda}{L}}(\widehat{\Bw}^{[k]})))$, implies that the flag matrix remain unchanged for all $k>K$. Thus starting at the $K$-th pass, the ISTA iteration reduces to the power method on the matrix $\Rak$, converging linearly to the unique eigenvector at a rate given by Lemma \ref{lemma:6.2}(b). 
\qquad \end{proof}

\begin{theorem}\label{thm:linear-conv-fista}
	Suppose the LASSO problem \eqref{LASSO} has a unique solution and this solution has strict complementarity: that is for every index $i$, $\widetilde{w}^*_i \neq \pm \rfrc{\lambda}{L}$. Then eventually the FISTA iteration reaches a stage where it converges linearly to that unique solution.
\end{theorem}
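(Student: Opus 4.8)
The plan is to follow the proof of Theorem~\ref{thm:linear-conv-ista} almost line for line, with the single fixed operator $\Rak$ replaced by the slowly varying family $\Nak$ and the plain power-method step replaced by a non-autonomous version of it; Lemmas~\ref{lemma:FISTA-simple} and~\ref{lemma:FISTA-tau-1} supply exactly the uniform structure that was automatic in the ISTA case. Since $\widetilde{\Bx}^*$ is unique, set $\Bnu^* = \rfrc{1}{\lambda}A^{\T}(\Bb-A\widetilde{\Bx}^*)$, $\widetilde{\Bw}^* = \widetilde{\Bx}^* + \rfrc{\lambda}{L}\Bnu^*$, and let $D^* = \textsc{Diag}(\textsc{sign}(\text{Shr}_{\rfrc{\lambda}{L}}(\widetilde{\Bw}^*)))$ be the ``limiting'' flag matrix. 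By strict complementarity $\widetilde{w}^*_i \neq \pm\rfrc{\lambda}{L}$ for every $i$, so for $\epsilon>0$ small the quantity $C_2 = \min_i\{|\widetilde{w}^*_i - \rfrc{\lambda}{L}| - \epsilon,\ |\widetilde{w}^*_i + \rfrc{\lambda}{L}| - \epsilon\}$ is positive, and any vector all of whose coordinates lie within $C_2$ of the corresponding coordinate of $\widetilde{\Bw}^*$ produces the flag $D^*$. By Lemma~\ref{FISTA-KT-2}, $\fwas = \bpm \widetilde{\Bw}^* \\ \widetilde{\Bw}^* \\ 1 \epm$ is an eigenvector of $\Na$ for eigenvalue $1$ whenever the flag is $D^*$; by Lemma~\ref{lemma:unique} (using uniqueness) the upper-left block $N$ of such an $\Na$ has no eigenvalue $1$, so $1$ is a simple dominant eigenvalue with fixed left eigenvector $\Bz^{\T}=(0,\dots,0,1)$. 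Lemma~\ref{lemma:FISTA-simple} then gives one fixed similarity $\BP_N$ (depending only on $D^*$) that block-diagonalizes every $\Nak$ carrying flag $D^*$ as $\BP_N \bpm 1 & 0 \\ 0 & \widetilde{\BJ}_N^{[k]} \epm \BP_N^{-1}$, and Lemma~\ref{lemma:FISTA-tau-1} writes $\widetilde{\BJ}_N^{[k]} = \widetilde{\BJ}_{N'} + (1-\tau^{[k]})\widetilde{\BJ}_{\barN}$ with $\rho(\widetilde{\BJ}_{N'})<1$ and $\rho(\widetilde{\BJ}_{\barN})<1$.

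Next I would manufacture one fixed norm in which all post-freeze operators contract the error by a common factor. Using Theorem~\ref{thm:spectral property}(c), choose a nonsingular $G$ (constructed from $\BP_N$, hence tied only to $D^*$) with $\|\widetilde{\BJ}_{N'}\|_G \le \rho(\widetilde{\BJ}_{N'}) + \epsilon' =: r_0 < 1$, set $G_{\bf aug} = \bpm G & 0 \\ 0 & 1 \epm$ as in Lemma~\ref{lemma:6.2}, and let $C_1$ be the constant relating $\|\cdot\|_G$ and $\|\cdot\|_\infty$. Since $\tau^{[k]}\to 1$ from below (Remark (a) following Lemma~\ref{FISTA-KT-2}), pick $K_0$ so large that $(1-\tau^{[k]})\,\|\widetilde{\BJ}_{\barN}\|_G \le \rfrc{1}{2}(1-r_0)$ for all $k\ge K_0$; then $\|\widetilde{\BJ}_N^{[k]}\|_G \le r := \rfrc{1}{2}(1+r_0) < 1$ for all such $k$. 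Now $\Bz^{\T}$ is a left eigenvector for eigenvalue $1$ of every block-upper-triangular $\Nak$, and the last coordinate of every augmented FISTA state equals $1$, so the spectral projector onto the $1$-eigenspace annihilates $\wak - \fwas$; that is, $\wak - \fwas$ lies entirely in the $\BP_N$-invariant complement of $\fwas$, on which $\Nak$ acts as $\widetilde{\BJ}_N^{[k]}$. Consequently, whenever the flag at pass $k\ge K_0$ equals $D^*$, one has $\|\,\Nak\wak - \fwas\,\|_{G_{\bf aug}} \le r\,\|\wak - \fwas\|_{G_{\bf aug}}$, the FISTA substitute for Lemma~\ref{lemma:6.2}(b).

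Then I would close the bootstrap exactly as in Theorem~\ref{thm:linear-conv-ista}. Define $\Bcal_1=\{\|\cdot-\fwas\|_\infty \le C_2\}$, $\Bcal_2=\{\|\cdot-\fwas\|_{G_{\bf aug}} \le C_2/C_1\}$, $\Bcal_3=\{\|\cdot-\fwas\|_\infty \le C_2/C_1^2\}$, so $\Bcal_3\subseteq\Bcal_2\subseteq\Bcal_1$ by Lemma~\ref{lemma:6.2}(a). By global convergence of FISTA (Theorem~\ref{thm:conv}), $\widetilde{\Bx}^{[k]}\to\widetilde{\Bx}^*$ and $\widetilde{\Bx}^{[k]}-\widetilde{\Bx}^{[k-1]}\to 0$, hence $\By^{[k]}\to\widetilde{\Bx}^*$ and therefore $\widetilde{\Bw}^{[k]}\to\widetilde{\Bw}^*$; choose $K\ge K_0$ with $\widetilde{\Bw}^{[K-1]},\widetilde{\Bw}^{[K]}\in\Bcal_3$, which also forces $\widetilde{D}^{[K-1]}=\widetilde{D}^{[K]}=D^*$. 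Now induct on $k\ge K$ with hypothesis ``$\wak\in\Bcal_2$ and $\widetilde{D}^{[k-1]}=\widetilde{D}^{[k]}=D^*$'': the contraction above (applicable since the flag at pass $k$ is $D^*$ and $k\ge K_0$) gives ${\bf w}^{[k+1]}_{\bf aug}=\Nak\wak\in\Bcal_2\subseteq\Bcal_1$, so every coordinate of $\widetilde{\Bw}^{[k+1]}$ stays within $C_2$ of the corresponding coordinate of $\widetilde{\Bw}^*$ and never crosses $\pm\rfrc{\lambda}{L}$, giving $\widetilde{D}^{[k+1]}=D^*$ and closing the step. Hence the flag is frozen at $D^*$ from pass $K$ on, and the iteration becomes the non-autonomous power iteration ${\bf w}^{[k+1]}_{\bf aug}=\Nak\wak$ in which all $\Nak$ share the dominant eigenpair $(1,\fwas)$; telescoping, $\|\wak-\fwas\|_{G_{\bf aug}}\le r^{\,k-K}\,\|{\bf w}^{[K]}_{\bf aug}-\fwas\|_{G_{\bf aug}}\to 0$, so $\widetilde{\Bx}^{[k+1]}=\text{Shr}_{\rfrc{\lambda}{L}}(\widetilde{\Bw}^{[k]})\to\text{Shr}_{\rfrc{\lambda}{L}}(\widetilde{\Bw}^*)=\widetilde{\Bx}^*$ linearly, with asymptotic rate approaching $\rho(\widetilde{\BJ}_{N'})$ as $\tau^{[k]}\to 1$.

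The main obstacle --- and the only genuinely new point beyond the ISTA proof --- is the non-autonomy of $\Nak$: it changes at every pass even after the flags freeze, so Lemma~\ref{lemma:6.2}, stated for a single matrix, cannot be quoted directly. The resolution lies entirely in Lemmas~\ref{lemma:FISTA-simple}--\ref{lemma:FISTA-tau-1}, which furnish one common similarity $\BP_N$, one common dominant eigenpair for the whole family, and the perturbation identity $\widetilde{\BJ}_N^{[k]}=\widetilde{\BJ}_{N'}+(1-\tau^{[k]})\widetilde{\BJ}_{\barN}$; combined with $\tau^{[k]}\to 1$ this yields a single fixed norm in which every post-freeze operator contracts by the same $r<1$. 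The one delicate point is the order of quantifiers --- $G$ (hence $C_1$) is fixed from $D^*$ first, then $K_0$ from $\tau^{[k]}\to 1$, then the ball radii via $C_2$, and only then $K$ from global convergence --- after which the ISTA bootstrap transfers verbatim.
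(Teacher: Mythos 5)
Your proof is correct and follows essentially the same route as the paper's: the same fixed similarity $\BP_N$ from Lemma \ref{lemma:FISTA-simple}, the same splitting $\widetilde{\BJ}_N^{[k]}=\widetilde{\BJ}_{N'}+(1-\tau^{[k]})\widetilde{\BJ}_{\barN}$ from Lemma \ref{lemma:FISTA-tau-1}, and the same $G$-norm argument that exploits $\tau^{[k]}\to 1$ to obtain a uniform contraction factor strictly less than $1$ after some pass. Your explicit bootstrap for freezing the flag matrix is simply a more careful rendering of the step the paper handles by invoking Theorem \ref{thm:conv} together with Lemma \ref{lemma:6.2}(c).
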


\begin{proof}
	As the proof for Theorem \ref{thm:linear-conv-ista}, one can construct $C_2'= \min \{|\widetilde{w}_i^* - \rfrc{\lambda}{L}| - \epsilon', |\widetilde{w}_i^*+ \rfrc{\lambda}{L}| - \epsilon'\}$ for a positive constant $\epsilon'$ sufficiently small to make $C_2' > 0$. By Theorem \ref{thm:conv} and Lemma \ref{lemma:6.2}(c), there exists a pass $K_1$ such that $\widetilde{\Bw}^{[k]}$ lies in $\Bcal_1'$ (i.e. $\| \widetilde{\bf w}^{[k]}_{\bf aug} - \widetilde{\bf w}^{*}_{\bf aug}\|_{\infty} \leq C_2$) for all $k>K_1$. It also means the flag matrix will remain unchanged for all $k>K_1$. Thus, by Lemma \ref{lemma:FISTA-simple}
	\be \label{eq: final-w}
	\begin{aligned}
    \widetilde{\bf w}^{[k+l]}_{\bf aug}
  & ={\bf N}^{[k+l-1]}_{\bf aug}  {\bf N}^{[k+l-2]}_{\bf aug}  \cdots  {\bf N}^{[j]}_{\bf aug} \widetilde{\bf w}^{[k]}_{\bf aug} \\
    & =\BP_{N} {\BJ}_N^{[k+l-1]}  \BP_{N} ^{-1} \BP_{N} \widetilde{\BJ}_N^{[k+l-2]}\BP_{N} ^{-1}  \cdots \BP_{N}  \widetilde{\BJ}_N^{[k]}  \BP_{N} ^{-1} \widetilde{\bf w}^{[k]}_{\bf aug} \\
    & = \BP_{N} \bpm 1 & 0 \\ 0& \widetilde{\BJ}_N^{[k+l-1]} \epm \bpm 1 & 0 \\ 0& \widetilde{\BJ}_N^{[k+l-2]} \epm  \cdots\bpm 1 & 0 \\ 0& \widetilde{\BJ}_N^{[k]} \epm \BP_{N} ^{-1} \widetilde{\bf w}^{[k]}_{\bf aug}\\ 
    & =  \BP_{N} \bpm 1 & 0 \\ 0& \widetilde{\BJ}_N^{[k+l-1]} \widetilde{\BJ}_N^{[k+l-2]}   \cdots\widetilde{\BJ}_N^{[k]} \epm \BP_{N} ^{-1} \widetilde{\bf w}^{[k]}_{\bf aug} .
	\end{aligned}
	\ee
	For each $\widetilde{\BJ}_N^{[k]}$, we can write $\widetilde{\BJ}_N^{[k]} = \widetilde{\BJ}_{N'} + (1-\tau^{[k]}) \widetilde{\BJ}_{\barN}$, where $\widetilde{\BJ}_{N'}$, $\widetilde{\BJ}_{\barN}$ are defined in Lemma \ref{lemma:FISTA-tau-1}.
	Due to the fixed flag matrix after $K_1$-th pass, $\widetilde{\BJ}_{N'}$ and $\widetilde{\BJ}_{\barN}$ remain fixed for all $k>K_1$.
	By Lemma \ref{lemma:6.2}, $\forall \epsilon >0$ with $\epsilon < 1 - \rho(\widetilde{\BJ}_{N'})$, $\exists ~  \|\cdot\|_{G}$ such that $ \|\widetilde{\BJ}_{N'}\|_{G} <
	\rho(\widetilde{\BJ}_{N'})+\rfrc12 \epsilon < 1-\rfrc12\epsilon$.
	Since $\widetilde{\BJ}_{\barN}$ is fixed, there must exist a pass $K_2 ( >K_1)$ such that $(1-\tau^{[k]})
	\cdot \|\widetilde{\BJ}_{\barN}\|_G < \rfrc12\epsilon$  for all $k>K_2$.
	Therefore, starting at $K_2$-th pass, one has $\|\widetilde{\BJ}_N^{[k]}\|_G \leq \|\widetilde{\BJ}_{N'}\|_G + (1-\tau^{[k]}) \|\widetilde{\BJ}_{\barN}\|_G <
	\rho(\widetilde{\BJ}_{N'})+\epsilon <  1$.
	\be
	\begin{aligned}
	&\BP_{N} ^{-1} \widetilde{\bf w}^{[k+1]}_{\bf aug} &=& \bpm 1 & 0 \\ 0& \widetilde{\BJ}_N^{[k]} \epm(\BP_{N} ^{-1} \widetilde{\bf w}^{*}_{\bf aug} + \By^{[k]}_{\bf aug}) \\
	&&= &\BP_{N} ^{-1} \widetilde{\bf w}^{*}_{\bf aug} +\bpm 1 & 0 \\ 0& \widetilde{\BJ}_N^{[k]} \epm\By^{[k]}_{\bf aug} = \BP_{N} ^{-1} \widetilde{\bf w}^{*}_{\bf aug}  + \By^{[k+1]}_{\bf aug}
	\end{aligned}
	\ee
	\be
	\|\By^{[k+l]}_{\bf aug} \|_G \leq O(\|\widetilde{\BJ}_{N}^{[k+l]}\|\|\widetilde{\BJ}_{N}^{[k+l-1]}\|\cdots) \leq O((\rho(\widetilde{\BJ}_{N'})+\epsilon)^{[k+l]}) \longrightarrow 0.
	\ee 
	Therefore, $\| \widetilde{\bf w}^{[k]}_{\bf aug}- \widetilde{\bf w}^{*}_{\bf aug}\|$ converge linearly at a rate bounded by $ \rho(\widetilde{\BJ}_{N'})+\epsilon <1$.
\qquad \end{proof}


\section{Acceleration} \label{sec:accel}
It is known that FISTA exhibits a global convergence rate $O(\rfrc{1}{k^2})$, which accelerates ISTA's convergence rate $O(\rfrc{1}{k})$. Compared to this worst case convergence result, we analyze how FISTA and ISTA behave through all iterations on the perspective of spectral analysis we establish in this paper.  First, we characterize one important property based on three possible regimes.
\begin{lemma}
Suppose $R$ and $N$ have the same the flag matrix, ISTA and FISTA have the following relations:

{\em (a)}. If FISTA is in regime [A] or [C], then so is ISTA, and vice versa.

{\em (b)}. If FISTA is in regime [B], then so is ISTA, and vice versa.
\end{lemma}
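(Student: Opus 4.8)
The plan is to reduce all three regimes to two Boolean quantities about the common core operator and to show that ISTA and FISTA read off the same values. Fix the index $k$ and read the hypothesis ``$R$ and $N$ have the same flag matrix'' as $\widehat D^{[k]}=\widetilde D^{[k]}=\widetilde D^{[k-1]}=:D$. By remark~(b) after Lemma~\ref{FISTA-KT-2} this collapses the data defining the two operators: $\widetilde R^{[k]}=\widetilde R^{[k-1]}=R^{[k]}=:R$ and $\bar\Bh^{[k]}=\widehat\Bh^{[k]}=:\Bh$, so that, writing $\tau=\tau^{[k]}$,
\[
\Ra=\bpm R & \Bh \\ 0 & 1\epm,\qquad
N=\bpm (1+\tau)R & -\tau R \\ I & 0\epm,\qquad
\Na=\bpm (1+\tau)R & -\tau R & \Bh \\ I & 0 & 0 \\ 0 & 0 & 1\epm .
\]
I will show that the regime (among [A], [B], [C], the only possibilities when $D$ is fixed, by Lemmas~\ref{lemma:Jordan-decomp-R}--\ref{lemma:Jordan-decomp-N}) is governed by ``is $1$ an eigenvalue of the core operator?'' and, if so, ``does $\Bh$ lie in $\mathrm{range}$ of (core $-I$)?'', and that these answers are the same for $R$ and $N$.

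First I would dispose of regime [A]. Every eigenvalue of $R$ lies in $[0,1]$ (Lemma~\ref{lemma:ISTA-norm}(b)) and every eigenvalue of $N$ lies in $\Dcal(\rfrc{1}{2},\rfrc{1}{2})$ (Lemma~\ref{lemma:FISTA-norm}(b)), so $\rho(R)<1\iff 1\notin\sigma(R)$ and $\rho(N)<1\iff 1\notin\sigma(N)$; and Lemma~\ref{lemma:FISTA-norm}(c) applied with $\widetilde R=R$ gives $1\in\sigma(R)\iff 1\in\sigma(N)$. Hence ISTA is in [A] (i.e.\ $\rho(R)<1$) iff FISTA is in [A] (i.e.\ $\rho(N)<1$).

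Next I would treat the remaining case $1\in\sigma(R)$ (equivalently $1\in\sigma(N)$) and separate [B] from [C] by the presence of a $2\times2$ Jordan block at eigenvalue $1$ in the augmented matrix. The elementary fact I would establish is: if $M$ is square and its eigenvalue $1$ has a complete set of eigenvectors, then for any $\Bp$ the matrix $\bpm M & \Bp \\ 0 & 1\epm$ has a single $2\times2$ Jordan block at $1$ precisely when $\Bp\notin\mathrm{range}(M-I)$ --- the characteristic polynomial picks up exactly one extra factor $1-z$, while a one-line solvability check on $\bpm \Bu \\ c\epm$ shows $\dim\ker\bigl(\bpm M&\Bp\\0&1\epm-I\bigr)$ equals $\dim\ker(M-I)+1$ if $\Bp\in\mathrm{range}(M-I)$ and $\dim\ker(M-I)$ otherwise. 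This applies to $\Ra$ since eigenvalue $1$ of $R$ is complete (Lemma~\ref{lemma:ISTA-norm}(c)), and to $\Na$ since eigenvalue $1$ of $N$ is complete (Lemma~\ref{lemma:FISTA-norm}(d), using $\tau<1$ from remark~(a)). It then remains to match the two range conditions, and a direct computation with $N-I=\bpm (1+\tau)R-I & -\tau R \\ I & -I\epm$ shows $(N-I)\bpm \Bu \\ \Bv\epm=\bpm\Bh\\0\epm$ forces $\Bu=\Bv$ and $(R-I)\Bu=\Bh$, whence $\bpm\Bh\\0\epm\in\mathrm{range}(N-I)\iff\Bh\in\mathrm{range}(R-I)$. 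Combining, $\Na$ has a $2\times2$ Jordan block at $1$ iff $\Ra$ does; that is, FISTA is in [B] iff ISTA is in [B], which is part~(b), and in the complementary sub-case ($1\in\sigma(R)$ with no such block) both are in [C], completing part~(a).

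The step I expect to need the most care is bookkeeping rather than hard analysis: pinning down that ``same flag matrix'' means all three of $\widehat D^{[k]},\widetilde D^{[k]},\widetilde D^{[k-1]}$ coincide so remark~(b) legitimately yields a single $R$ and $\Bh$; confirming $\tau^{[k]}<1$ strictly so Lemma~\ref{lemma:FISTA-norm}(d) really delivers a complete eigenbasis of $N$ at $1$; and checking that the trichotomy in Lemmas~\ref{lemma:Jordan-decomp-R}--\ref{lemma:Jordan-decomp-N} is exactly captured by the two Booleans above (no $2\times2$ block and an $I$-block of size $1$ $\iff$ $1\notin\sigma$; a $2\times2$ block present $\iff$ [B]; otherwise [C]). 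Everything else is quoted from the lemmas cited or is the short computation sketched here.
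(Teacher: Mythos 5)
Your proposal is correct and follows essentially the same route as the paper: the same collapse of the flag matrices to a single $R$ and $\Bh$, the same key computation that $(N-I)\bpm \Bu \\ \Bv\epm = \bpm \Bh \\ 0\epm$ forces $\Bu=\Bv$ and $(R-I)\Bu=\Bh$, and the same reliance on Lemmas \ref{lemma:ISTA-norm}(c) and \ref{lemma:FISTA-norm}(b)--(d) together with the regime trichotomy. The only differences are cosmetic: you replace the paper's appeal to Lemma \ref{lemma:unique} with an explicit multiplicity count for the augmented matrix, and you match [A] with [A] and [C] with [C] separately (a mild refinement), whereas the paper proves the union statement (a) directly and obtains (b) by contraposition.
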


\begin{proof}
We note that if FISTA and ISTA start at the same iterate, we have
$\widehat{D} = \widetilde{D}$, hence $\widetilde{R}$ defined in \eqref{eq-S-P-Q} is exactly operator $R$ defined in \eqref{ISTA-Raug}. 

(a). If FISTA is in regime [A] or [C], then $N$ either has no eigenvalue equal to $1$ or has a complete set of eigenvectors associated with eigenvalue $1$. In other words, the augmented matrix $\Na$ must have a complete set of eigenvectors for eigenvalue $1$. Let $\bpm \Bx \\\By \\1\epm$ be the eigenvector for eigenvalue $1$, then
\be
\begin{array}{lll}
\multicolumn{3}{l}{
(N-I)\bpm \Bx \\\By \\1\epm = \bpm (1+\tau) R & -\tau R & \widehat{\Bh} \\ I & -I & 0 \\ 0 & 0 & 0 \epm \bpm \Bx \\\By \\1\epm = 0 }\\
&\iff& \Bx = \By \quad \text{(by second row)}\\
&\iff& R\Bx -\Bx +\widehat{\Bh} = (R-I)\Bx +\widehat{\Bh} =0\\
&\iff& \bpm R & \widehat{\Bh} \\0&1 \epm \bpm \Bx \\ 1\epm =\bpm \Bx \\ 1\epm.
\end{array}
\ee
Therefore, $\bpm \Bx \\ 1\epm$ becomes the eigenvector for eigenvalue $1$ of $\Ra$. $R$ must either have no eigenvalue equal to $1$ (in regime [A]) or have a complete set of eigenvectors associated with eigenvalue $1$ (in regime [C]). The opposite direction follows by similar argument.

(b). Since one of the regimes [A], [B], [C] must occur, this statement can be considered as the contraposition of (a). 
\qquad \end{proof}

This lemma suggests that both ISTA and FISTA are in the same regime as long as both operators have the same flag matrix. It motivates one to compare in each regime between FISTA and ISTA when starting from the same starting point (which results in the same flag matrix).
By assuming the same starting point and a fixed flag matrix, we have
$\widehat{D}^{[k]} = \widetilde{D}^{[k]} = \widehat{D}^{[k+1]}= \widetilde{D}^{[k+1]}$ and thus $\widetilde{R} = R$, $\widehat{\Bh} = \bar{\Bh}$. We will use these notations interchangeably and omit $^{[k]}$ for anything but iterates in the following analysis. It turns out that FISTA is faster in regime [B], but not always faster in regimes [A] and [C] depending on the parameter $\tau^{[k]}$.

\subsection{In Regime [B]} \label{sec:regime-b}
In regime [B], as mentioned in Section \ref{sec:local}, there exist Jordan chains such that the difference of iterates will converge to a constant step. Let $\bpm \widehat{\Bw}^{[k+1]} \\ 1 \epm$ , $\bpm \widehat{\Bw}^{[k]} \\ 1 \epm$ and $\bpm \widetilde{\Bw}^{[k+1]} \\ \widetilde{\Bw}^{[k]} \\ 1 \epm$, $\bpm \widetilde{\Bw}^{[k]} \\ \widetilde{\Bw}^{[k-1]} \\ 1 \epm$ be two consecutive iterates for ISTA and FISTA, respectively. In the following lemmas, we will show that the constant step for FISTA is larger than ISTA when starting at the same point, which yield a speedup.

\begin{lemma}\label{lemma:ISTA-constant-step}
The constant step vector for ISTA is $\bpm \Bv \\ 0 \epm$, where $\Bv = R \Bv$ is an eigenvector of $R$.
\end{lemma}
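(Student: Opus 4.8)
The plan is to analyze the asymptotic behavior of the ISTA recurrence \eqref{ISTA-Raug} while the flag matrix $\widehat{D}^{[k]}$ is held fixed, in the regime [B] situation where $\Rak$ has a $2\times 2$ Jordan block $\bpm 1 & 1 \\ 0 & 1\epm$ for eigenvalue $1$. First I would recall from Lemma \ref{lemma:Jordan-decomp-R} that, with the flag matrix fixed, $\Rak$ is a constant matrix admitting the decomposition $\Rak = \BP_R \BJ_R \BP_R^{-1}$ with $\BJ_R = \Diag\{\bpm 1 & 1\\ 0 & 1\epm, I_R, \widehat{\BJ}_R\}$, where $\rho(\widehat{\BJ}_R) < 1$. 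The $2\times 2$ block corresponds to a Jordan chain: vectors $\Bq, \Br$ (the generalized and genuine eigenvectors, extracted as the appropriate columns of $\BP_R$) with $(\Ra - I)\Bq = \Br$ and $(\Ra - I)\Br = 0$. As noted in the regime [B] discussion in Section \ref{sec:four-regimes}, any component of the iterate along $\Bq$ contributes a fixed increment $\Br$ per step, while all contributions from $I_R$ are stationary and those from $\widehat{\BJ}_R$ decay geometrically. Hence $\bpm \widehat{\Bw}^{[k+1]} \\ 1\epm - \bpm \widehat{\Bw}^{[k]} \\ 1\epm \to \alpha\Br$ for the scalar $\alpha$ equal to the coefficient of $\Bq$ in the $\BP_R$-coordinates of the initial augmented iterate; this limiting difference is the ``constant step vector.''

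Next I would pin down the structure of $\Br$. Since $\Br$ is an eigenvector of $\Ra = \bpm R & \widehat{\Bh} \\ 0 & 1\epm$ for eigenvalue $1$, write $\Br = \bpm \Bv \\ c\epm$; the bottom row of $(\Ra - I)\Br = 0$ forces $0\cdot c = 0$, which is vacuous, so I must instead use that $\Br$ lies in the range of $\Ra - I$ (being $(\Ra - I)\Bq$). The bottom row of $\Ra - I$ is identically zero, so the last entry of $\Br$ is $0$, i.e. $c = 0$ and $\Br = \bpm \Bv \\ 0\epm$. Then the top block of $(\Ra - I)\Br = 0$ reads $(R - I)\Bv + \widehat{\Bh}\cdot 0 = 0$, i.e. $R\Bv = \Bv$, so $\Bv$ is an eigenvector of $R$ for eigenvalue $1$. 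This gives exactly the claimed form $\bpm \Bv \\ 0\epm$ with $\Bv = R\Bv$. I would also remark that the existence of a nontrivial $2\times 2$ Jordan block in regime [B] guarantees (via Lemma \ref{lemma:unique}, contrapositive) that $R$ does have an eigenvalue $1$, so such a nonzero $\Bv$ genuinely exists.

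I expect the main obstacle to be making the convergence-to-a-constant-step argument fully rigorous rather than heuristic: one must expand the augmented iterate in the basis $\BP_R$, apply powers $\BJ_R^k$ block by block — using that $\bpm 1 & 1 \\ 0 & 1\epm^k = \bpm 1 & k \\ 0 & 1\epm$ produces the linearly growing but constant-differenced term, that $I_R^k = I_R$, and that $\widehat{\BJ}_R^k \to 0$ — and then verify that the first differences converge, with the limit independent of $k$, namely $\alpha\Br$. Once that bookkeeping is done, identifying $\Br = \bpm \Bv \\ 0\epm$ with $R\Bv = \Bv$ is the short algebraic step carried out above, and the lemma follows.
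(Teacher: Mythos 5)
Your argument is correct and takes essentially the same route as the paper: regime [B] supplies the Jordan chain $(\Ra-I)\Bq=\Br$, $(\Ra-I)\Br=0$, the zero bottom row of $\Ra-I$ forces the last entry of $\Br$ to vanish, and the top block of $(\Ra-I)\Br=0$ then gives $R\Bv=\Bv$; the paper's proof just writes the chain down directly, leaving the asymptotic bookkeeping (differences converging to $\alpha\Br$) to the regime [B] discussion in Section \ref{sec:four-regimes}. One small quibble on your closing remark: that $R$ has an eigenvalue $1$ is already built into the definition of regime [B] (and follows simply from counting multiplicities, since a nontrivial Jordan block for eigenvalue $1$ of $\Ra$ needs algebraic multiplicity at least $2$), whereas the contrapositive of Lemma \ref{lemma:unique}, combined with Lemma \ref{lemma:ISTA-norm}(c), actually yields that no eigenvector of $\Ra$ for eigenvalue $1$ has a nonzero last entry, not the statement you cite it for.
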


\begin{proof}
For ISTA, there must be a Jordan block $\BJ_R^1$ for the augmented matrix $\Ra$. Then there exists a Jordan chain such that
\be
\bpm R & \widehat{\Bh} \\0&1 \epm = \bpm \widehat{\Bw} \\ 1 \epm = \bpm \widehat{\Bw}+\Bv \\ 1 \epm \quad \text{and} \quad \bpm R & \widehat{\Bh} \\0&1 \epm \bpm \Bv \\ 0 \epm = \bpm \Bv \\ 0 \epm.
\ee
In other words, each pass of ISTA will add a constant vector $\bpm \Bv \\ 0 \epm$ in regime [B].
\qquad \end{proof}

\begin{lemma}\label{lemma:FISTA-constant-step}
The constant step vector for FISTA has the form $\bpm c\Bv \\ c\Bv \\ 0 \epm$, where $\Bv$ is the same $\Bv$ in lemma  \ref{lemma:ISTA-constant-step}, $c$ is a scalar to be determined.
\end{lemma}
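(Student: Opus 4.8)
The plan is to rerun the argument of Lemma \ref{lemma:ISTA-constant-step} with the FISTA augmented matrix $\Na$ in place of $\Ra$, and then to pin down the resulting eigenvector by comparing the two Jordan chains through the identities $\widetilde{R} = R$ and $\bar{\Bh} = \widehat{\Bh}$ that hold under a shared, fixed flag matrix. Throughout we use that ISTA is in regime [B] as well (by the preceding lemma), so Lemma \ref{lemma:ISTA-constant-step} applies and supplies the vector $\Bv$.

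First I would set up the FISTA chain. Being in regime [B] means, by Lemma \ref{lemma:Jordan-decomp-N} (together with Lemmas \ref{lemma:FISTA-norm} and \ref{lemma:unique}), that $\Na$ has a single $2\times 2$ block $\bpm 1 & 1 \\ 0 & 1 \epm$ for the eigenvalue $1$; let $\widetilde{\Bq},\widetilde{\Br}$ denote the chain, so $(\Na - I)\widetilde{\Bq} = \widetilde{\Br}$ and $(\Na - I)\widetilde{\Br} = 0$. As explained in the regime [B] discussion of Section \ref{sec:local}, the difference of consecutive FISTA iterates converges to a fixed scalar multiple of the eigenvector $\widetilde{\Br}$, so it suffices to prove that $\widetilde{\Br}$ is proportional to $\bpm \Bv \\ \Bv \\ 0 \epm$. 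Because $\Na$ is block upper triangular with a $1\times 1$ trailing block and $\widetilde{\Br} = (\Na - I)\widetilde{\Bq}$, the last entry of $\widetilde{\Br}$ is zero; writing $\widetilde{\Br} = \bpm \Br_1 \\ \Br_2 \\ 0 \epm$, the relation $(\Na - I)\widetilde{\Br} = 0$ says $\bpm \Br_1 \\ \Br_2 \epm$ is an eigenvector of $N$ for eigenvalue $1$. Lemma \ref{lemma:FISTA-norm} then forces $\Br_1 = \Br_2 =: \Bu$, and part (a) of that lemma (with $\gamma = 1$, which forces $\beta = 1$ in the quadratic relation) shows $\Bu$ is an eigenvector of $\widetilde{R} = R$ for eigenvalue $1$, i.e.\ $R\Bu = \Bu$.

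Next I would use the generalized eigenvector to identify $\Bu$. Normalize $\widetilde{\Bq} = \bpm \Bq_1 \\ \Bq_2 \\ 1 \epm$ to have last entry $1$; this is legitimate because its last entry cannot be $0$ (else $\bpm \Bq_1 \\ \Bq_2 \epm$ would be a genuine rank-$2$ generalized eigenvector of $N$ for eigenvalue $1$, contradicting Lemma \ref{lemma:FISTA-norm}(d)). Expanding $(\Na - I)\widetilde{\Bq} = \widetilde{\Br}$ with $P = (1+\tau)R$, $Q = -\tau R$, and $\bar{\Bh} = \widehat{\Bh}$, the second block row gives $\Bq_1 - \Bq_2 = \Bu$; substituting this into the first block row and using $R\Bu = \Bu$ makes the $\tau$-dependent terms cancel, leaving $(R - I)\Bq_1 + \widehat{\Bh} = (1-\tau)\Bu$. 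On the ISTA side, Lemma \ref{lemma:ISTA-constant-step} gives $R\Bv = \Bv$ together with $(R - I)\widehat{\Bw} + \widehat{\Bh} = \Bv$ for the companion generalized eigenvector $\bpm \widehat{\Bw} \\ 1 \epm$ of $\Ra$. Subtracting the two identities yields $\Bv - (1-\tau)\Bu = (R - I)(\widehat{\Bw} - \Bq_1)$, so the left-hand side lies in $\mathrm{range}(R - I)$; it also lies in $\ker(R - I)$, since both $\Bv$ and $\Bu$ are eigenvectors of $R$ for eigenvalue $1$.

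Finally I would invoke Lemma \ref{lemma:ISTA-norm}(c): the eigenvalue $1$ of $R$ has a complete set of eigenvectors, so its generalized eigenspace equals $\ker(R-I)$, whence $\ker(R-I) \cap \mathrm{range}(R-I) = \{0\}$. Therefore $\Bv = (1-\tau)\Bu$; since $\Bv \neq 0$ this also shows $\tau \neq 1$, so $\Bu = \frac{1}{1-\tau}\Bv$ and $\widetilde{\Br} = \frac{1}{1-\tau}\bpm \Bv \\ \Bv \\ 0 \epm$. The FISTA constant step, being a scalar multiple of $\widetilde{\Br}$, then has the asserted form $\bpm c\Bv \\ c\Bv \\ 0 \epm$. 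The step I expect to be the main obstacle is the middle one: organizing the substitution so that the $\tau$-terms really do collapse to $(R - I)\Bq_1 + \widehat{\Bh} = (1-\tau)\Bu$, and then making the range/kernel splitting watertight --- in particular, checking that $\Bv - (1-\tau)\Bu$ lies in $\mathrm{range}(R - I)$ independently of the (non-unique) choice of $\widetilde{\Bq}$, which holds because adding any eigenvalue-$1$ eigenvector to $\widetilde{\Bq}$ leaves $(R - I)\Bq_1$ unchanged.
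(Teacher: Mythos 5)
Your proposal is correct, and its core skeleton is the paper's: in regime [B] you take the Jordan chain of $\Na$ for eigenvalue $1$, the second block row of the eigenvector equation forces the two leading blocks of the chain's terminal vector to coincide ($\Br_1=\Br_2=\Bu$), and the eigenvalue relation of Lemma \ref{lemma:FISTA-norm}(a) with $\gamma=1$ forces $R\Bu=\Bu$, so the constant step has the block form $\bpm c\Bv\\ c\Bv\\ 0\epm$. Where you genuinely diverge is in how you get proportionality to $\Bv$: the paper simply declares ``since both $\Bv_1$ and $\Bv$ are eigenvectors for eigenvalue $1$ of $R$, write $\Bv_1=c\Bv$,'' which tacitly uses a one-dimensional eigenspace, whereas you expand the rank-$2$ generalized eigenvectors of $\Ra$ and $\Na$, cancel the $\tau$-terms to get $(R-I)\Bq_1+\widehat{\Bh}=(1-\tau)\Bu$ versus $(R-I)\widehat{\Bw}+\widehat{\Bh}=\Bv$, and use semisimplicity of the eigenvalue $1$ of $R$ (Lemma \ref{lemma:ISTA-norm}(c), giving $\ker(R-I)\cap\mathrm{range}(R-I)=\{0\}$) to conclude $\Bv=(1-\tau)\Bu$. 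This buys two things: it closes the implicit uniqueness gap in the paper's one-line proportionality step, and it already yields $c=\frac{1}{1-\tau}$ purely from the algebra of the two chains (under the shared-flag identities $\widetilde R=R$, $\bar{\Bh}=\widehat{\Bh}$), i.e.\ it anticipates Lemma \ref{lemma-fista-acce} without invoking the same-starting-point identification of iterates with chain vectors that the paper uses there; the cost is a longer argument than the statement strictly requires. One small slip: the ``constant step'' interpretation of the chain vector is discussed in Section \ref{sec:four-regimes}, not Section \ref{sec:local}.
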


\begin{proof}
Assume the constant vector is $\bpm \Bv_1 \\ \Bv_2 \\ 0 \epm$. Then basic iteration of FISTA is
\begin{equation*}
\bpm \widetilde{\Bw}^{[k+1]} \\ \widetilde{\Bw}^{[k]} \\ 1 \epm = N \bpm \widetilde{\Bw}^{[k]} \\ \widetilde{\Bw}^{[k-1]} \\ 1 \epm=\bpm (1+\tau) R & -\tau R & \widehat{\Bh} \\ I & 0 & 0 \\ 0 & 0 & 1 \epm \bpm \widetilde{\Bw}^{[k]} \\ \widetilde{\Bw}^{[k-1]} \\ 1 \epm = \bpm \widetilde{\Bw}^{[k]} \\ \widetilde{\Bw}^{[k-1]} \\ 1 \epm + \bpm \Bv_1 \\ \Bv_2 \\ 0 \epm.
\end{equation*}

Due to the presence of Jordan block $\bpm 1 & 1\\ 0 & 1\epm$, there exists a Jordan chain
\be\label{FISTA:Jordan-chain}
(N-I)\bpm \widetilde{\Bw}^{[k]} \\ \widetilde{\Bw}^{[k-1]} \\ 1 \epm = \bpm \Bv_1 \\ \Bv_2 \\ 0 \epm \quad \text{and} \quad N\bpm \Bv_1 \\ \Bv_2 \\ 0 \epm = \bpm \Bv_1 \\\Bv_2 \\ 0 \epm.
\ee

In the second equation of \eqref{FISTA:Jordan-chain}, the second row implies $\Bv_1 = \Bv_2$. Then, the first row implies $R\Bv_1 = \Bv_1$. Since both $\Bv_1$ and $\Bv$ are eigenvectors for eigenvalue $1$ of $R$, we can write $\Bv_1 = c\Bv$ where $c$ is a scalar to be determined. Hence the constant step should be $\bpm c\Bv \\ c\Bv  \\ 0 \epm$.
\qquad \end{proof}

\begin{lemma}\label{lemma-fista-acce}
Suppose ISTA and FISTA start from the same point in the same regime [B], i.e.  $\widehat{\Bw}^{[k]} = \widetilde{\Bw}^{[k]} $, then $c$ in Lemma \ref{lemma:FISTA-constant-step} equals to $\frac{1}{1-\tau}$, where $\tau$ is a scalar close to $1$. The constant step vector for FISTA is $\frac{1}{1-\tau}\bpm \Bv \\ \Bv\\ 0 \epm$, which is larger than $\bpm \Bv \\ \Bv \\ 0 \epm$, the ISTA constant step.
\end{lemma}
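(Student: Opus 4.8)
The plan is to determine the scalar $c$ by matching the head of the FISTA Jordan chain against that of the ISTA Jordan chain at the common starting iterate, and then to conclude $c>1$ from the fact that $\tau^{[k]}<1$. First I would recall from the proof of Lemma \ref{lemma:ISTA-constant-step} that in regime [B] the head $\widehat{\Bw}$ of the ISTA Jordan chain satisfies $(R-I)\widehat{\Bw}+\widehat{\Bh}=\Bv$, where $\Bv=R\Bv$ is the (nonzero) eigenvector of $R$ for eigenvalue $1$. By Lemma \ref{lemma:FISTA-constant-step} the FISTA constant step has the form $\bpm c\Bv\\ c\Bv\\ 0\epm$, and its Jordan-chain head $\bpm \widetilde{\Bw}^{[k]}\\ \widetilde{\Bw}^{[k-1]}\\ 1\epm$ obeys $(N-I)\bpm \widetilde{\Bw}^{[k]}\\ \widetilde{\Bw}^{[k-1]}\\ 1\epm=\bpm c\Bv\\ c\Bv\\ 0\epm$, where, because ISTA and FISTA share the same flag matrix (so $\widetilde{R}=R$ and $\bar{\Bh}=\widehat{\Bh}$), $N=\bpm (1+\tau)R & -\tau R & \widehat{\Bh}\\ I & 0 & 0\\ 0 & 0 & 1\epm$.

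Next I would extract $c$. The second block row of the FISTA identity gives $\widetilde{\Bw}^{[k-1]}=\widetilde{\Bw}^{[k]}-c\Bv$; substituting this into the first block row, using $R\Bv=\Bv$, and cancelling, the first block row collapses to $(R-I)\widetilde{\Bw}^{[k]}+\widehat{\Bh}=c(1-\tau)\Bv$. Since ISTA and FISTA start from the same point, $\widetilde{\Bw}^{[k]}=\widehat{\Bw}^{[k]}=\widehat{\Bw}$, so the left-hand side equals $\Bv$ by the ISTA relation above; as $\Bv\neq 0$ this forces $c(1-\tau)=1$, i.e. $c=\frac{1}{1-\tau}$. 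Finally, since $\tau=\tau^{[k]}\in(0,1)$ with $\tau^{[k]}\to 1$ from below (Remark (a) after Lemma \ref{FISTA-KT-2}), we have $\frac{1}{1-\tau}>1$, so the FISTA constant step $\frac{1}{1-\tau}\bpm \Bv\\ \Bv\\ 0\epm$ is strictly larger, blockwise and in every norm, than the ISTA constant step $\bpm \Bv\\ \Bv\\ 0\epm$.

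The delicate point is not the algebra but the meaning of the hypotheses: ``constant step'' and ``start from the same point'' must be read in terms of the heads of the respective Jordan chains --- equivalently, in the asymptotic regime where the contributions of the eigenvalues of modulus strictly less than $1$ have faded out --- and $\tau$ should be treated as effectively frozen over the passes in question, consistently with Remark (a) and with the frozen-stepsize analysis of Lemmas \ref{lemma:fixed-point1}--\ref{lemma:FISTA-tau-1}. Once that interpretation is fixed, the whole lemma follows from the one-line reduction above.
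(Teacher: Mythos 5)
Your proposal is correct and follows essentially the same route as the paper: extract $\widetilde{\Bw}^{[k-1]}=\widetilde{\Bw}^{[k]}-c\Bv$ from the second block row of the Jordan-chain relation, substitute into the first block row using $R\Bv=\Bv$, and invoke the ISTA relation $(R-I)\widehat{\Bw}^{[k]}+\widehat{\Bh}=\Bv$ at the common starting point to obtain $c(1-\tau)=1$. The only difference is presentational (you rearrange to $c(1-\tau)\Bv$ before matching, and make explicit the Jordan-chain reading of the hypotheses), which does not change the argument.
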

\begin{proof}
By Lemma \ref{lemma:FISTA-constant-step}, the equation \eqref{FISTA:Jordan-chain} expands to
\be
\begin{aligned}
	&(N-I)\bpm \widetilde{\Bw}^{[k]} \\ \widetilde{\Bw}^{[k-1]} \\ 1 \epm &=& \left[\bpm (1+\tau) R & -\tau R & \widehat{\Bh} \\ I & 0 & 0 \\ 0 & 0 & 1 \epm - I \right]\bpm \widetilde{\Bw}^{[k]} \\ \widetilde{\Bw}^{[k-1]} \\ 1 \epm \\
	&& =& \bpm ((1+\tau)R-I)\widetilde{\Bw}^{[k]} - \tau R \widetilde{\Bw}^{[k-1]}+\widehat{\Bh} \\ \widetilde{\Bw}^{[k]} - \widetilde{\Bw}^{[k-1]} \\ 0 \epm
\end{aligned}
\ee
which is supposed to be equal to $\bpm c\Bv \\ c\Bv \\ 0 \epm$. From the second row, $\widetilde{\Bw}^{[k]} -\widetilde{\Bw}^{[k-1]} = c\Bv$ or $\widetilde{\Bw}^{[k-1]}= \widetilde{\Bw}^{[k]} -c\Bv$. Hence, the first row should be $c\Bv = ((1+\tau)R-I)\widetilde{\Bw}^{[k]} - \tau R \widetilde{\Bw}^{[k-1]}+\widehat{\Bh}=((1+\tau)R-I)\widetilde{\Bw}^{[k]} - \tau R (\widetilde{\Bw}^{[k]} - c\Bv)+\widehat{\Bh} = (R-I)\widetilde{\Bw}^{[k]}+\widehat{\Bh}+c\tau\Bv$. The last equality follows by $R\Bv = \Bv$.

If FISTA and ISTA start from the same point $\widehat{\Bw}^{[k]} = \widetilde{\Bw}^{[k]}$, then $c\Bv = (R-I)\widetilde{\Bw}^{[k]}+\widehat{\Bh}+c\tau\Bv = (R-I)\widehat{\Bw}^{[k]}+\widehat{\Bh}+c\tau\Bv = \Bv+c\tau\Bv$, leading to $c(1-\tau) = 1$. Hence $c = \frac{1}{1-\tau}$.
\qquad \end{proof}
Lemma \ref{lemma-fista-acce} indicates that if FISTA and ISTA starts from the same starting point in one specific regime [B], then it will cost FISTA fewer iterations to leave this regime with larger constant step. Hence it is an acceleration compared to ISTA in regime [B]. 

\subsection{In Regimes [A] and [C]} \label{sec:regime-a-c}
On the other hand, in regimes [A] and [C], the convergence rate of the two algorithms are related to the spectral radius of $R$ and $N$.
Particularly, the rate of FISTA depends on $\tau$ and the iteration number since $\tau$ is a determined sequence based on iteration numbers.
Let $\beta$, $\gamma$ denote an eigenvalue of $R$,  $N$, respectively, and $\beta_{\max}$,    $\gamma_{\max}$ denote the
corresponding eigenvalues of largest absolute value.
As stated in Section \ref{sec:four-regimes}, we must have $1>\beta_{\max}, \gamma_{\max} \geq 0$ in regimes [A] or [C].
In addition,  by Lemma \ref{lemma:FISTA-norm}, 
$\beta$ and $\gamma$ satisfy the relation $ \gamma^2 - \gamma(1+\tau)\beta + \tau\beta = 0$. Let $\gamma_1$ and $\gamma_2$ be two roots of $\gamma$. We conclude our result in the following proposition.

\begin{proposition}\label{proposition:acceleration1}
Suppose ISTA and FISTA start from the same point in a certain regime [A] or [C] and $D^{[k]} = D^{[k+1]}$, FISTA is faster than ISTA if $1 > \beta_{\max} > \tau>0$ but slower if $0< \beta_{\max} < \tau < 1$.
If $\tau > \beta_{\max}$ then the eigenvalue $\gamma_{\max}$ of $N$ of largest absolute value is one of a pair of
complex conjugate eigenvalues.
Because $\beta_{\max}$ is a fixed value for one specific regime, with the $\tau$ growing to $1$, ISTA will be faster than FISTA toward the end.
\end{proposition}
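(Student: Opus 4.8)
The plan is to reduce everything to the scalar quadratic relation supplied by Lemma~\ref{lemma:FISTA-norm}(a). Since ISTA and FISTA start at the same iterate and the flag matrix does not change, we have $\widetilde R = R$ and the constant terms agree, so over these passes the convergence rate of ISTA is governed by $\beta_{\max}$ (the eigenvalue of $R$ of largest modulus that is $<1$) and the rate of FISTA by $|\gamma_{\max}|$, the eigenvalue of $N$ of largest modulus that is $<1$. By Lemma~\ref{lemma:FISTA-norm}(a), the spectrum of $N$ is exactly the set of roots of $q_\beta(\gamma):=\gamma^2-(1+\tau)\beta\gamma+\tau\beta$ as $\beta$ ranges over the spectrum of $R$, and the two roots $\gamma_1,\gamma_2$ attached to a given $\beta$ satisfy $\gamma_1\gamma_2=\tau\beta$ and $\gamma_1+\gamma_2=(1+\tau)\beta$. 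So the steps are: (i) write $|\gamma|$ as an explicit function of $(\beta,\tau)$; (ii) show it is nondecreasing in $\beta$, so that $|\gamma_{\max}|$ is produced by $\beta=\beta_{\max}$; (iii) compare $|\gamma_{\max}|$ with $\beta_{\max}$.

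For (i)--(ii): the discriminant of $q_\beta$ is $\Delta(\beta)=\beta\bigl[(1+\tau)^2\beta-4\tau\bigr]$, and since $0<\tau<1$ one has $\tau<\frac{4\tau}{(1+\tau)^2}<1$. Hence the roots form a complex conjugate pair when $\beta<\frac{4\tau}{(1+\tau)^2}$, with $|\gamma|^2=\gamma_1\gamma_2=\tau\beta$, and are real when $\beta\ge\frac{4\tau}{(1+\tau)^2}$, with dominant root $\gamma_+(\beta)=\frac12\bigl[(1+\tau)\beta+\sqrt{\beta}\,\sqrt{(1+\tau)^2\beta-4\tau}\,\bigr]$. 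On the real branch $\frac{d}{d\beta}\bigl[(1+\tau)^2\beta^2-4\tau\beta\bigr]=2(1+\tau)^2\beta-4\tau\ge 4\tau>0$, so $\gamma_+$ is increasing there; at the transition point $\beta^\ast=\frac{4\tau}{(1+\tau)^2}$ both formulas give $\frac{2\tau}{1+\tau}$, so $\beta\mapsto|\gamma(\beta)|$ is continuous and nondecreasing on $[0,\beta_{\max}]$ and $|\gamma_{\max}|$ comes from $\beta=\beta_{\max}$.

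For (iii): if $\tau>\beta_{\max}$, then $\beta_{\max}<\tau\le\frac{4\tau}{(1+\tau)^2}$ places $\beta_{\max}$ strictly in the complex range, so $\gamma_{\max}$ is one of a pair of complex conjugate eigenvalues (this is the third assertion) and $|\gamma_{\max}|=\sqrt{\tau\beta_{\max}}>\beta_{\max}$, i.e.\ FISTA is slower. If $\tau<\beta_{\max}$, then either $\beta_{\max}$ lies in the real range, in which case $\gamma_+(\beta_{\max})<\beta_{\max}$ becomes, after isolating the square root and squaring (legitimate since $1-\tau>0$), the inequality $4\tau\beta_{\max}<4\tau$, i.e.\ $\beta_{\max}<1$, which holds; or $\beta_{\max}$ still lies in the complex range (possible when $\tau<\beta_{\max}<\frac{4\tau}{(1+\tau)^2}$), in which case $|\gamma_{\max}|=\sqrt{\tau\beta_{\max}}<\beta_{\max}$. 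Either way FISTA is faster. In regime~[C] one must also note that the eigenvalue $1$ of $R$ contributes the eigenvalue $\gamma=\tau$ to $N$ (Lemma~\ref{lemma:FISTA-norm}(c)); but a short estimate gives $|\gamma_{\max}|>\tau$ whenever $\tau<\beta_{\max}$, so the FISTA rate there is still $|\gamma_{\max}|<\beta_{\max}$, while for $\tau>\beta_{\max}$ this branch only reinforces the slowdown. Finally, $\beta_{\max}<1$ is fixed for a fixed regime whereas $\tau^{[k]}\to 1$ from below as $k\to\infty$, so past some iteration $\tau^{[k]}>\beta_{\max}$, which yields the last sentence.

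I expect the main obstacle to be step (ii): one has to verify that $\beta\mapsto|\gamma(\beta)|$ is monotone \emph{across} the complex/real transition, so that comparing at $\beta_{\max}$ genuinely bounds $\rho(N)$; the monotonicity of $\gamma_+$ together with the value-matching at $\beta^\ast$ is exactly what makes this go through. The only other delicate points are the sign bookkeeping when squaring in $\gamma_+(\beta_{\max})<\beta_{\max}$ (which needs $\tau\neq 1$, precisely the standing hypothesis) and the extra $\gamma=\tau$ eigenvalue that appears in regime~[C].
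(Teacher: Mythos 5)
Your proposal is correct and takes essentially the same route as the paper's proof: both rest on the quadratic relation $\gamma^2-(1+\tau)\beta\gamma+\tau\beta=0$ from Lemma~\ref{lemma:FISTA-norm}(a), split on the discriminant threshold $\frac{4\tau}{(1+\tau)^2}$, use $|\gamma|^2=\tau\beta$ in the complex case, and bound the dominant real root in the other case. Your version is in fact a bit more careful than the paper's at three points the paper glosses over — the monotonicity of $\beta\mapsto|\gamma(\beta)|$ across the real/complex transition (which justifies that $\rho(N)$ is attained at $\beta_{\max}$), the clean squaring argument showing $\gamma_{+}(\beta)<\beta$ reduces to $\beta<1$, and the extra eigenvalue $\gamma=\tau$ of $N$ arising from the eigenvalue $1$ of $R$ in regime~[C] — but these are refinements of the same argument, not a different approach.
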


\begin{proof}
We prove this proposition in three steps.
	
(a). From $\gamma^2 - \gamma(1+\tau)\beta + \tau\beta = 0$, $\gamma$ has real roots if $1>\beta>\frac{4\tau}{(1+\tau)^2}$ and has complex roots if $ \frac{4\tau}{(1+\tau)^2} > \beta > 0$.
	
(b). If $ \frac{4\tau}{(1+\tau)^2} > \beta > 0$, then $\gamma_1$ and $\gamma_2$ are two conjugate roots such that  $|\gamma_1|^2 = \gamma_1\gamma_2 = \tau\beta$. Note that $0<\tau<1$, we separated into two cases. If $\frac{4\tau}{(1+\tau)^2} >\beta > \tau$, then $|\gamma| = \sqrt{\tau\beta} < \beta$. If $\tau>\beta > 0$, then $|\gamma| = \sqrt{\tau\beta} > \beta$.
	
(c). If $1>\beta>\frac{4\tau}{(1+\tau)^2}$, then $\tau < \beta$, and $\gamma_1$ and $\gamma_2$ are real and $\gamma_1 = \max\{\gamma_1,\gamma_2\}= \frac{(1+\tau)\beta}{2} + \sqrt{\frac{(1+\tau)^2\beta^2 - 4\tau\beta}{4} } <\frac{(1+\tau)\beta}{2} + \sqrt{\frac{(1+\tau)^2\beta^2 - 4\beta^2}{4} }<\beta$. The first inequality is due to $\beta > \tau$ and the second one is due to $\tau < 1$.
	
Since (b) and (c) are true for any pair of $\gamma$ and $\beta$, combining them together, we get $|\gamma_{\max}| < |\beta_{\max}|$ if $\beta_{\max} > \tau$ and $|\gamma_{\max}| > |\beta_{\max}|$ if $\beta_{\max} < \tau$. In such a regime [A] or [C], as stated in Section \ref{sec:local}, both ISTA and FISTA iteration can be reduced to the power method or similar to power method on the operator $\Ra$ and $\Na$ and the rate is determined by the $|\beta_{\max}|$ and $|\gamma_{\max}|$. We complete the proof.
\qquad \end{proof}

Proposition \ref{proposition:acceleration1} concludes that if the starting points are the same in regimes [A] or [C], then ISTA will first be slower but then be faster as the iteration progresses.

\section{Examples} \label{sec:examples}
{\bf Example $1$.} We illustrate the eigen-analysis of the behavior of ISTA and FISTA and then compare them by the propositions in Section \ref{sec:accel} based on a uniform randomly generated LASSO problem. Specifically, in problem \eqref{LASSO}, $A$ and $\Bb$ are generated independently by a uniform distribution over $[-1,1]$, $A$ being $20\times 40$. Since $A$ are drawn by a continuous distribution, as noted in Lemma 4 of \cite{T13}, problem \ref{LASSO} must have a unique solution. Figure 1 shows the ISTA and FISTA convergence behavior. Using the notations from Theorems \ref{thm:linear-conv-ista} \& \ref{thm:linear-conv-fista}, the figures show the error of $\Bx$, $\|\Bx^{[k]}-\Bx^{*}\|$  (A: top curve) and the difference between two consecutive iterates of $\Bx$: $\|\Bx^{[k]}-\Bx^{[k-1]}\|$ (B: bottom curve)

Figure \ref{fig-cvg}(left) shows the behavior of ISTA.
ISTA takes 5351 iterations to converge and the flag matrix $D$ changes 25 times in total. During the first 174 iterations, the iteration passes through a few transitional phases and the flag matrix $D$ changes 20 times. After that, from iteration 175 to 483, it stays in regime [B] with an invariant $D$. Then from iteration 484 to 530, from 531 to 756, from 767 to 4722 and from 4723 to 4972, it passes through four different regimes [B]. Within each regime [B], the flag matrix $D$ is invariant. According to our analysis in Section \ref{sec:four-regimes}, there exists a Jordan chain in each of these regimes [B], indicating that we are indeed in a ``constant step" regime. In other words, the difference between two consecutive iterates $\|\Bx^{[k]}-\Bx^{[k-1]}\|$ quickly converges to $\Ra$'s eigenvector for eigenvalue $1$ in each of these regimes [B]. Taking iterations from 767 to 4972  for example, one could notice curve {\bf B} in Figure \ref{fig-cvg} (ISTA) that $\|\Bx^{[k]}-\Bx^{[k-1]}\|$ is a constant from iteration $767$ to $4722$. Finally, at iteration 4973, it reaches and stays in the final regime [A], converging linearly in 378 steps. Indeed, the iterates are close enough to the final optimum so that the flags never change again. The linear convergence rate depends on the spectral radius of $R$, i.e. upper left part of $\Ra$, which is $\rho(R) = 0.9817$, separated from the $\Ra$'s largest eigenvalue $1$,  consistent with Theorem \ref{thm:linear-conv-ista}. 

Figure \ref{fig-cvg}(right) shows the behavior of FISTA.
FISTA takes 1017 iterations to converge and the flag matrix $D$ changes 42 times in total. After flag matrix $D$ changes 42 times in initial 258 iterations, it reaches the final regime [A] at iteration 259 and converges linearly in 758 steps. Since $\Na$ varies at each iteration due to varying $\tau$, the convergence rate changes very slightly step by step. The spectral radius of $N$, i.e. upper left part of the operator $\Na$ in the last step is $\rho(N) = 0.9914$. Actually, the largest eigenvalues of $N$ are a complex conjugate pair, $0.9843 \pm 0.1185i$. Note that they are complex numbers  because of the increasing $\tau$, as stated in Proposition \ref{proposition:acceleration1} in Section \ref{sec:accel}. Hence, in the final regime, the convergence to eigenvector for eigenvalue $1$ of $\Na$ will oscillate in the invariant subspaces spanned by the eigenvectors of two conjugate complex pairs. This explains why curves in Figure \ref{fig-cvg}(FISTA) oscillates in the latter part of the FISTA convergence.

\begin{figure}
	\centering
	\includegraphics[width =6.4cm]{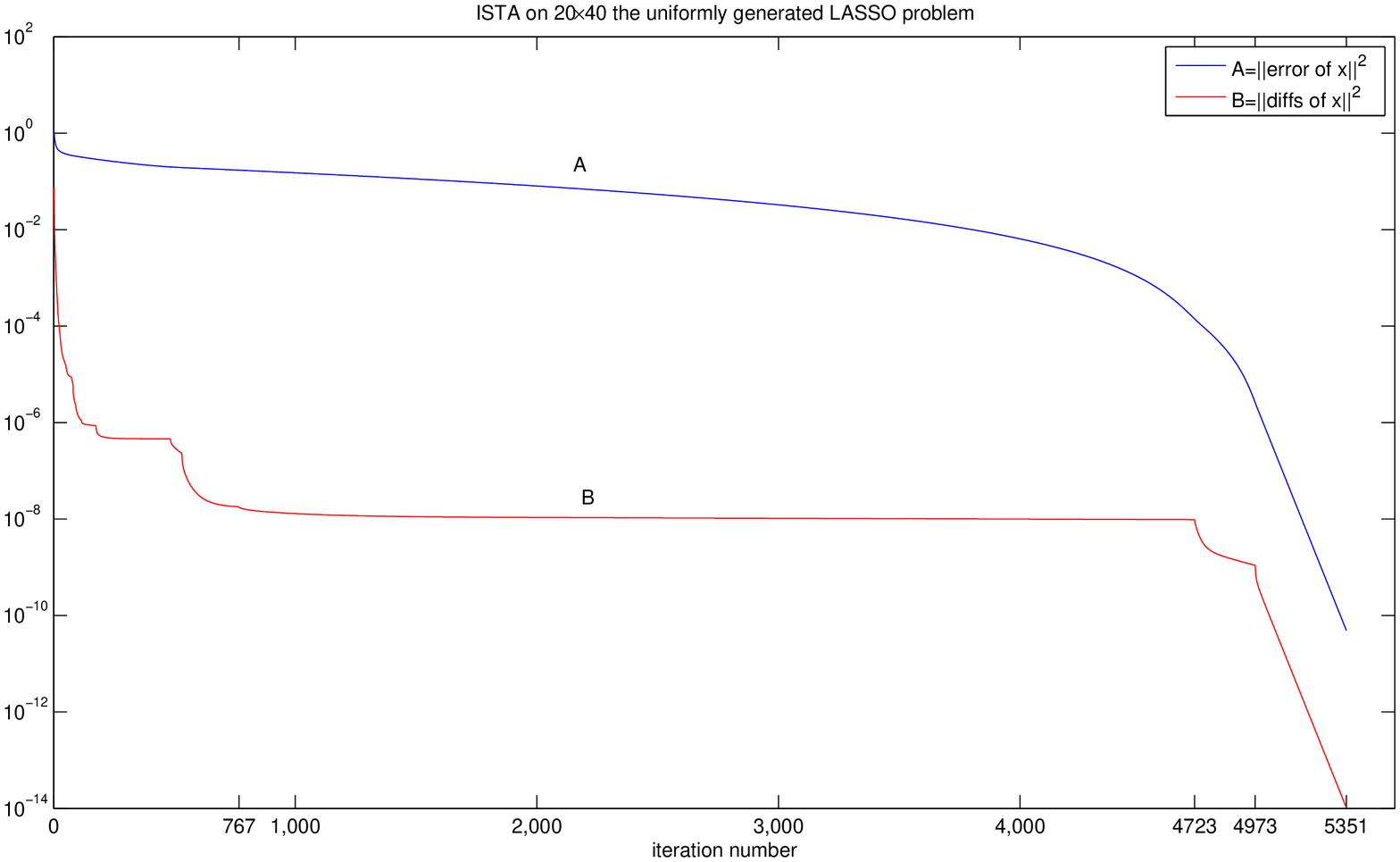} \label{eps-ista}
	\includegraphics[width =6.4cm]{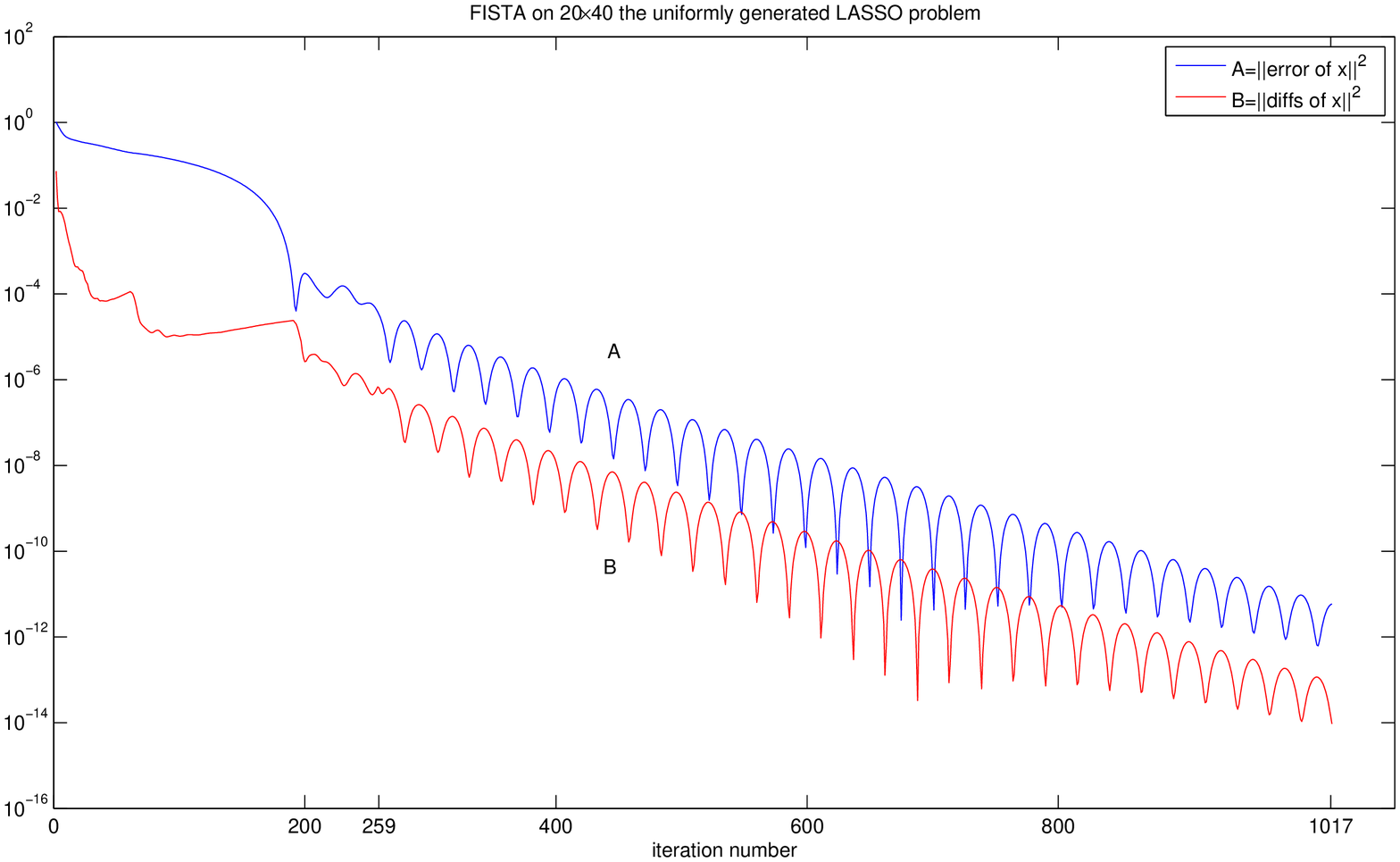} \label{eps-fista}
	\caption{ISTA (left) and FISTA (right) on Example 1: Curves {\bf A}:  $\|\Bx^{[k]}-\Bx^{*}\|$. {\bf B}:  $\|\Bx^{[k]}-\Bx^{[k-1]}\|$.  }
\label{fig-cvg}
\end{figure}

Figure \ref{fig:eigs} shows the eigenvalues of the operators $\Ra$ and $\Na$ during the final regime. One notices that the eigenvalues for the $\Ra$ from (\ref{ISTA-Raug}) lie strictly on the interval $(0,1)$ and  eigenvalues for $\Na$ lie
close to the boundary but strictly inside the circle $\Dcal(\rfrc{1}{2}, \rfrc{1}{2})$ (except for $0$ and $1$),
consistent with Lemmas \ref{lemma:ISTA-norm} $\&$ \ref{lemma:FISTA-norm}.

\begin{figure}
	\centering
	\includegraphics[width=6.4cm]{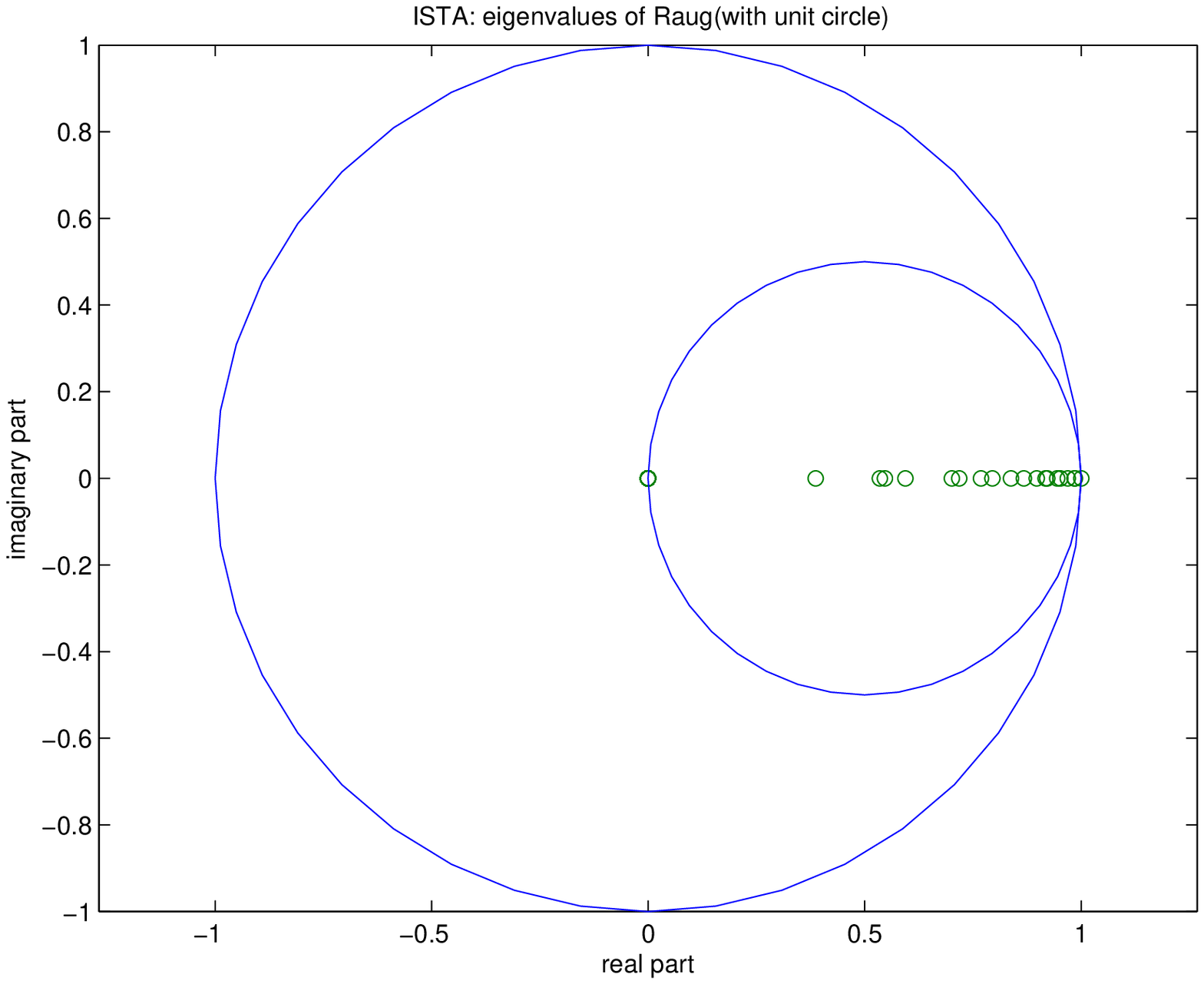}
	\includegraphics[width=6.4cm]{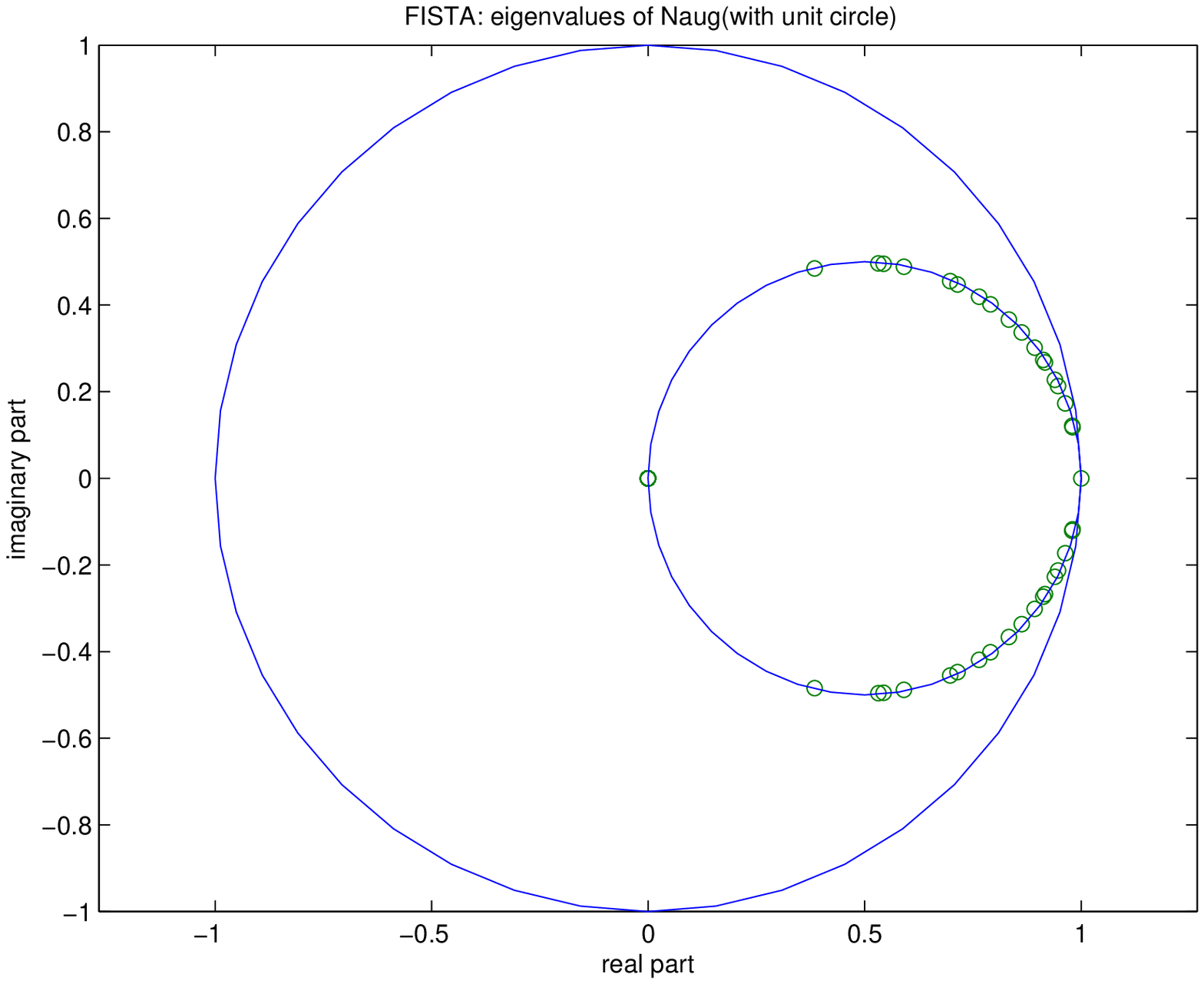}
	\caption{ISTA (left) and FISTA (right) on Example 1: Eigenvalues of ISTA operator $\Ra$ and FISTA operator $\Na$ on the complex plane during the last regime of the iteration process.
	The unit circle and \(\Dcal(\frc{1}{2},\frc{1}{2})\) are shown for reference.
	}
\label{fig:eigs}
\end{figure}

Comparing ISTA with FISTA, we make two remarks based on our propositions in Section \ref{sec:accel}:

a. It costs FISTA many fewer steps (259 iterations) than ISTA (4973 iterations) to get to the final regime. The main reason is that FISTA has much larger constant steps in regime [B] so that it can jump out of that regime more quickly. Though this will lead to more changes of regimes (flag matrix $D$ changes 42 times, 17 more times than ISTA), the overall iteration numbers have been cut down, consistent with Lemma \ref{lemma-fista-acce} in Section \ref{sec:accel}. One can also notice this in Figure \ref{fig-cvg}(FISTA) that difference of iterates will not remain at a constant number for a long time and the iterates will be soon in the final regime.

{ b. At iteration 259 of FISTA, $\tau=0.9886$ while $\rho(R) = 0.9817$. Proposition \ref{proposition:acceleration1} predicts  $\rho(N) > \rho(R) $ for the rest of the iterations. Indeed, $\rho(N) = 0.9914 >\rho(R)$. This means ISTA is faster than FISTA in each of their final regime. In other words, if one detects the arrival of final regime and then change from FISTA to ISTA at step $259$, the algorithm (denoted as F/ISTA in Figure \ref{fig:ifista}) should have converge faster than the standard FISTA. As shown in Figure \ref{fig:ifista}, the algorithm of this idea converges only in $696$ iterations with the same accuracy compared to $1017$ iterations of FISTA. }
\begin{figure}
	\centering
	\includegraphics[width =8cm]{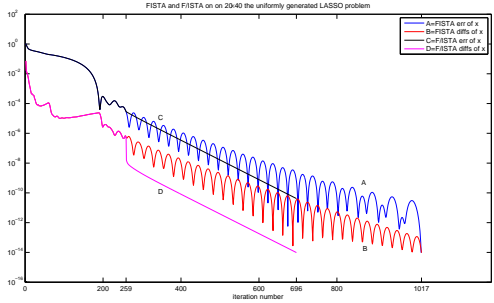}
	\caption{Curve C and D show hybrid F/ISTA on Example 1, i.e. follow FISTA during initial 259 iterations to reach the final regime and then switch to ISTA until it converges. The total iteration number is 696, far less than 1017 iterations of FISTA alone with the same accuracy. The FISTA on Example 1 (curve A and B) is shown for reference.}
\label{fig:ifista}
\end{figure}

{\bf Example 2} We consider an example of compressed sensing. The focus of this example is not on the efficiency comparison among different methods but to show its local behavior to support our analysis. Suppose there exists a true sparse signal represented by a $n$-th dimension vector $\Bx$ with $k$ non-zero elements. We observe the image of $\Bx_s$ under the linear transformation $A\Bx_s$, where $A$ is the so-called measurement matrix. Our observation thus should be 
\be
\Bb = A\Bx_s + \epsilon
\ee
where $\epsilon$ is the observation noise. The goal is to recover the sparse vector $\Bx_s$ from the measurement matrix $A$ and observation $\Bb$.
For this example, we let $A\in \R^{m\times n}$ be Gaussian matrix whose elements are \textit{i.i.d} distributed as $\mathcal{N}(0,1)$ with $m = 128$ and $n = 1024$, $\epsilon$ be a vector whose elements are \textit{i.i.d} distributed as $\mathcal{N}(0,\sigma^2)$ with $\sigma = 10^{-3}$.
The original true signal for the problem is generated by choosing the locations of $\Bx$'s $k(=10)$ nonzeros uniformly at random, then setting those locations to values drawn from $\mathcal{N}(0, 2^2)$. 

We solve this compressd sensing problem by model \eqref{LASSO} with both ISTA and FISTA method. For ISTA, we set $\lambda = 1$ and the final recovered signal $\widehat{\Bx}_\text{opt}$ , i.e. the optimal solution of model \eqref{LASSO} under $\lambda = 1$, has the relative error $\| \widehat{\Bx}_\text{opt} - \Bx_s \| / \|\Bx_s\|= 5.34\times 10^{-3}$. As for FISTA, we set $\lambda = 0.5$ and the final recovered signal $\widetilde{\Bx}_\text{opt}$, i.e. the optimal solution of model \eqref{LASSO} under $\lambda = 0.5$, has the relative error $\| \widetilde{\Bx}_\text{opt} - \Bx_s \| / \|\Bx_s\|= 2.65\times 10^{-3}$.

It costs ISTA $2822$ iterations to reach the final regime and finally converges in totally $3001$ iterations. On the other hand, it costs FISTA $372$ iterations to reach the final regime and converges in totally $717$ iterations. Figure 3 show their convergence behavior. It can be seen that curves of the difference of iterates in both two figures remain at a constant number for many iterations. This is because they are in the constant regimes such that the difference between consecutive iterates (curves B) are converging to a constant vector. But such iteration number for FISTA obviously is shorter than ISTA because it has a larger constant step size, as we indicated in Section \ref{sec:accel}. 

Finally, both algorithms has linear convergence for the final regimes. The linear convergence rate for ISTA is the second largest eigenvalue of $\Ra$, which equal to $ 0.9587$. The linear convergence rate  for FISTA from step $372$ to the last step $717$ is the second largest eigenvalue of $\Nak$, which remains at $ 0.9752$ for $k$ from $372$ to $717$, slower than ISTA rate. From this example, by
the time FISTA reaches the final regime, $\tau$ is so close to 1 that the rate for FISTA is
changing very little. At iteration 372, $\tau^{[372]}= 0.9920$, which is already greater than $0.9587$,
predicting that switching to ISTA at this point would be advantageous.

\begin{figure}
	\centering
	\includegraphics[width =6.4cm]{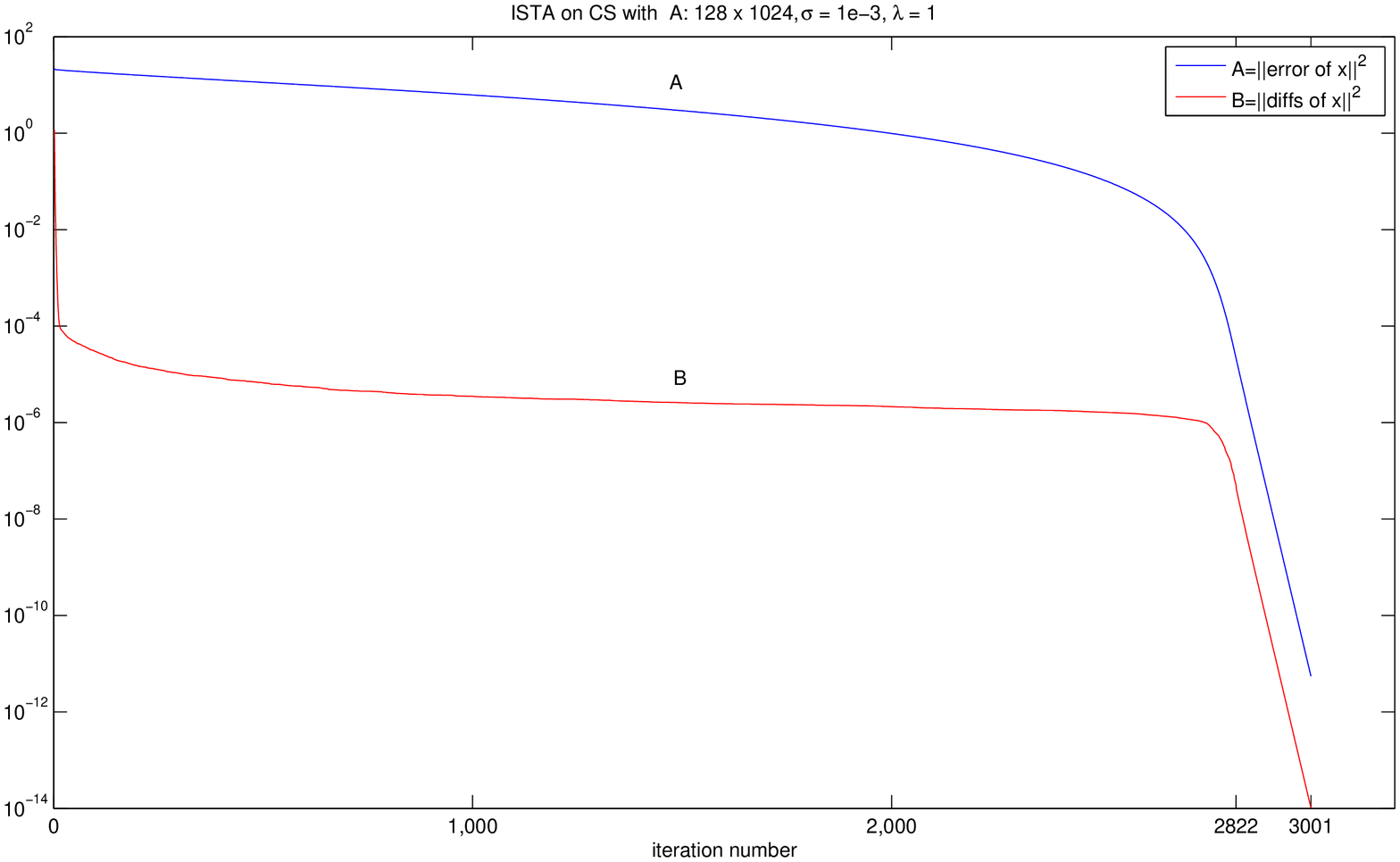}
	\includegraphics[width =6.4cm]{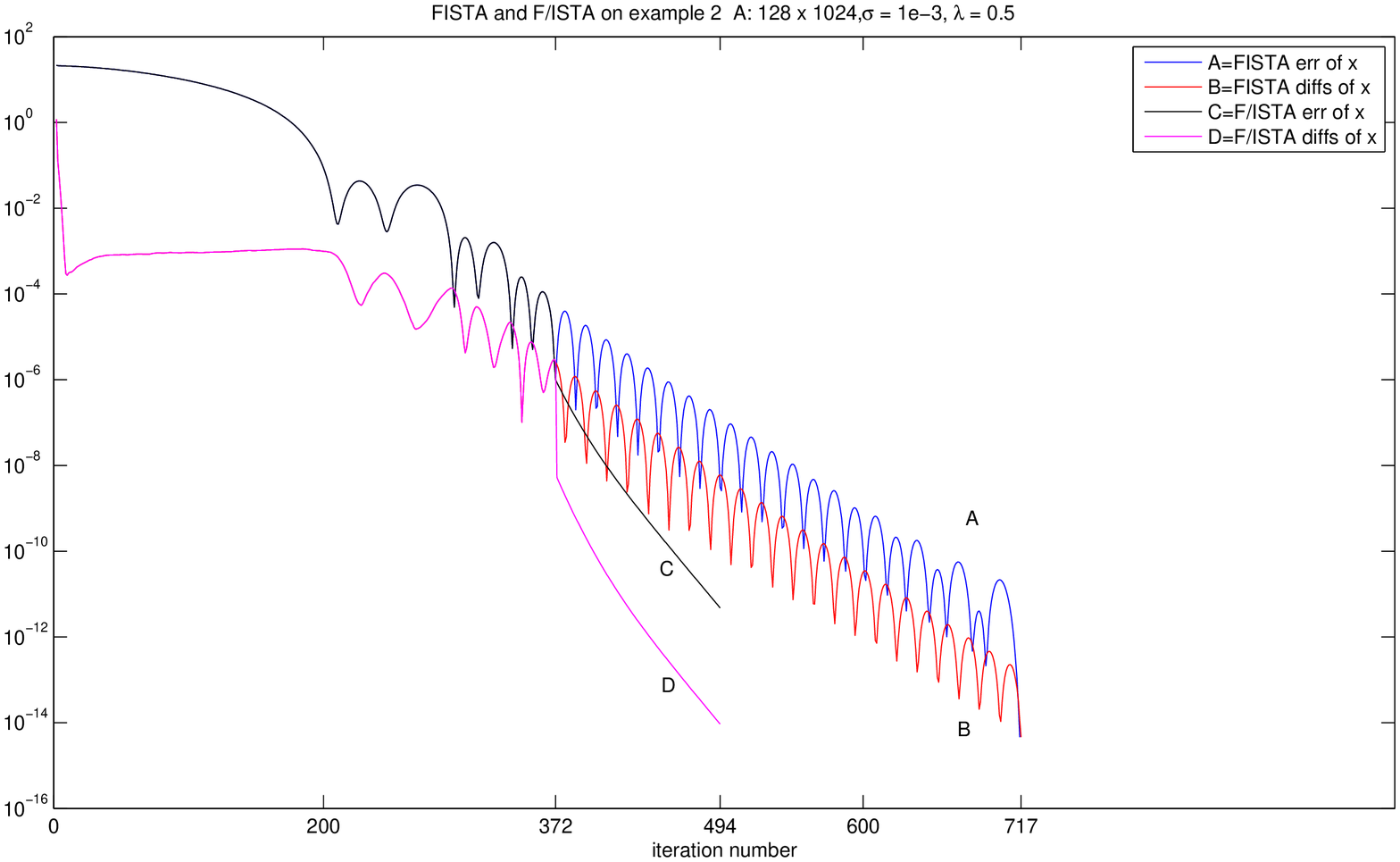}
	\caption{Left: ISTA on Example 2. Right:  Curve A and B is the FISTA on Example 2. Curve C and D is the local convergence behavior of F/ISTA on Example 2, i.e. first run FISTA during initial 372 iterations to reach the final regime and then run ISTA until it converges. The total iteration number is 494, far less than 717 iterations of FISTA with the same accuracy. }
\label{fig:stacs}
\end{figure}

{ Particularly, if one detects the arrival of final regime and then changes from FISTA to ISTA, the algorithm (denoted as F/ISTA in Figure \ref{fig:stacs}) will have a faster linear convergence rate. Much computational cost will be reduced. As shown in Figure \ref{fig:stacs}, the algorithm of this idea converges only in $494$ iterations with the same accuracy compared to $717$ iterations of FISTA. }

\section{Conclusion} \label{sec:end}
In this paper, we  show the locally linear convergence of ISTA and FISTA, applied to the LASSO problem. Extending the same techniques as in \cite{B13}, both algorithms can be modeled as the matrix recurrence form and thus the spectrum can be used to analyze their convergence behaviors. It is shown that the method normally passes through several regimes of four types and eventually settles on a `linear regime' in which the iterates converge linearly with the rate depending on the absolute value of the second largest eigenvalue of the matrix recurrence.

In addition, we provide a way to analyze every type of the regime. Such analysis in terms of regimes allows one to study the aspect of acceleration of FISTA. It is well known that FISTA is faster than ISTA according the worst case complexity bound. Our analysis gives another way to show how both methods behave during the whole iterations.  It turns out that FISTA is not always faster than ISTA in regime [A] and [C], depending on the continually growing stepsize. But in general FISTA is faster because of its acceleration in regime [B].

\bibliography{fistaPaper}
\bibliographystyle{siam.bst}

\end{document}